\newcommand{\R}{\mathbb{R}}
\newcommand{\Acal}{\mathcal{A}}
\newcommand{\Bcal}{\mathcal{B}}
\newcommand{\Ccal}{\mathcal{C}}
\newcommand{\Dcal}{\mathcal{D}}
\newcommand{\Scal}{\mathcal{S}}
\newcommand{\Tcal}{\mathcal{T}}
\newcommand{\Wcal}{\mathcal{W}}
\newcommand{\Ucal}{\mathcal{U}}
\newcommand{\Vcal}{\mathcal{V}}
\newcommand{\Xcal}{\mathcal{X}}
\newcommand{\Ycal}{\mathcal{Y}}
\newcommand{\Cfrak}{\mathfrak{C}}
\newcommand{\Tfrak}{\mathfrak{T}}
\newcommand{\Wfrak}{\mathfrak{W}}
\newcommand{\ifrak}{\mathfrak{i}}
\newcommand{\Trm}{\mathrm{T}}
\DeclareMathOperator{\Hom}{Hom}
\DeclareMathOperator{\mods}{mod}
\DeclareMathOperator{\End}{End}
\DeclareMathOperator{\Ext}{Ext}
\DeclareMathOperator{\image}{im}
\DeclareMathOperator{\spann}{span}
\DeclareMathOperator{\coker}{coker}
\DeclareMathOperator{\add}{add}
\DeclareMathOperator{\proj}{proj}
\DeclareMathOperator{\Filt}{Filt}
\DeclareMathOperator{\Fac}{Fac}
\DeclareMathOperator{\ttilt}{\tau-tilt}
\DeclareMathOperator{\Hasse}{Hasse}
\DeclareMathOperator{\ftors}{f-tors}
\DeclareMathOperator{\tors}{tors}
\DeclareMathOperator{\brick}{brick}
\DeclareMathOperator{\jirrc}{j-irr^c}
\DeclareMathOperator{\itv}{itv}
\DeclareMathOperator{\wide}{wide}
\DeclareMathOperator{\sbrick}{sbrick}
\DeclareMathOperator{\ideal}{ideal}
\DeclareMathOperator{\tauint}{\tau-itv}
\DeclareMathOperator{\simp}{simp}
\newtheorem{thm}{Theorem}[section]
\newtheorem{lem}[thm]{Lemma}
\newtheorem{cor}[thm]{Corollary}
\newtheorem{prop}[thm]{Proposition}
\theoremstyle{definition}
\newtheorem{defn}[thm]{Definition}
\newtheorem{exmp}[thm]{Example}
\theoremstyle{remark}
\newtheorem{rmk}[thm]{Remark}
\numberwithin{equation}{section}
\begin{document}

% \title[short text for running head]{full title}
\title{$\tau$-cluster morphism categories of factor algebras}

%    Only \author and \address are required; other information is
%    optional.  Remove any unused author tags.

%    author one information
% \author[short version for running head]{name for top of paper}

\author{Maximilian Kaipel}
\address{Abteilung Mathematik, Department Mathematik/Informatik der Universität
zu Köln, Weyertal 86-90, 50931 Cologne, Germany}
\curraddr{}
\email{mkaipel@uni-koeln.de}
\thanks{}

%    \subjclass is required.
\subjclass[2010]{Primary 16G10; Secondary 16G20} 

\date{}

\dedicatory{}

% Abstract is required.
\begin{abstract}
    We take a novel lattice-theoretic approach to the $\tau$-cluster morphism category $\Tfrak(A)$ of a finite-dimensional algebra $A$ and define the category via the lattice of torsion classes $\tors A$. Using the lattice congruence induced by an ideal $I$ of $A$ we establish a functor $F_I: \Tfrak(A) \to \Tfrak(A/I)$. If $\tors A$ is finite, $F_I$ is a regular epimorphism in the category of small categories and we characterise when $F_I$ is full and faithful. The construction is purely combinatorial, meaning that the lattice of torsion classes determines the $\tau$-cluster morphism category up to equivalence. 
\end{abstract}

\maketitle

\section{Introduction}

Torsion classes are an important class of subcategories of an abelian category axiomatising the properties of torsion groups in the category of abelian groups \cite{Dickson66}. They are closely related to $t$-structures of triangulated categories \cite{BBD} and have analogues in a plethora of categorical setting \cite{AdachiEnomotoTsukamoto,BeligiannisReiten2007,Jorgensen16,Tattar2021}. In representation theory, torsion classes are essential objects of tilting theory \cite{BrennerButler}, $\tau$-tilting theory \cite{AIR2014} and Auslander-Reiten theory \cite{AuslanderSmalo1980}. In combinatorics, torsion classes arise as the Tamari lattice \cite{Thomas2012}, as Cambrian lattices \cite{IngallsThomas2009} and as the weak order of Weyl groups \cite{Mizuno2013}. Their purely lattice-theoretic properties have also inspired substantial research \cite{BarnardCarrolZhu19, DIRRT2017, IRRT2018,GarverMcConville19, Ringel2018}. Additionally, the study of torsion classes is motivated by their connection to cluster algebras \cite{BrustleYang}. \\

Wide subcategories of abelian categories were first considered in \cite{Hovey2001} and are intimately related to torsion classes \cite{MarksStovicek} and the study of stability conditions \cite{Asai2019WS,BST2019,King1994,Yurikusa2018}. Combinatorially, wide subcategories arise as non-crossing partitions \cite{IngallsThomas2009} and as the shard-intersection order of Weyl groups \cite{Thomas2018}. In fact, the poset of wide subcategories is encoded combinatorially in the lattice of torsion classes \cite{Enomoto2023}. We are particularly interested in the \textit{$\tau$-perpendicular wide subcategories} \cite{Jasso2015}, which generalise classical perpendicular categories \cite{GeigleLenzing1991}. These wide subcategories are equivalent to module categories and they have been the focus of much research \cite{AsaiPfeifer2019, BuanHanson2017, BuanMarsh2018, BuanMarsh2021, DIRRT2017}.\\

In a sequence of papers \cite{IgusaTodorov2017, BuanMarsh2018, BuanHanson2017} the \textit{$\tau$-cluster morphism} category $\Wfrak(A)$ was introduced for a finite-dimensional algebra $A$. Its objects are the $\tau$-perpendicular wide subcategories and factorisations of its morphisms are given by generalised exceptional sequences \cite{BuanMarsh2021}. One purpose of the $\tau$-cluster morphism category is to study the \textit{picture group} $G(A)$, which encodes the covering relations of the lattice of torsion classes \cite{IgusaTodorovWeyman2016, HansonIgusa2021}. The picture group is the fundamental group of the classifying space of $\Wfrak(A)$ and in many cases a $K(\pi,1)$ space \cite{BarnardHanson2022,HansonIgusa2021, HansonIgusaPW2SMC, IgusaTodorov2017, IgusaTodorov22, Kaipelcatpartfan}. Moreover, the picture group is closely related to maximal green sequences \cite{Keller2011,IgusaTodorovMGS2021} arising in the context of Donaldson-Thomas invariants and BPS states in physics \cite{KS2008}. \\

Beside the original algebraic approach, the geometric construction of an equivalent category $\Cfrak(A) \cong \Wfrak(A)$ from the $g$-vector fan \cite{DIJ2019} achieved in \cite{STTW2023} has enabled new insights into the $\tau$-cluster morphism category \cite{Kaipelcatpartfan}. On the other hand, the generalisation of $\Wfrak(A)$ to certain differential graded algebras and differential graded categories \cite{Borve2022, borve2024silt} aims to connect $\Wfrak(A)$ with the pro-unipotent group of the motivic Hall algebra considered in \cite{Bridgeland2017}. In this article, we begin by constructing an equivalent category $\Tfrak(A) \cong \Cfrak(A) \cong \Wfrak(A)$ from the lattice of torsion classes. This construction relates to that from the $g$-vector fan by \cite{STTW2023} via the bijection of maximal $g$-vector cones with functorially-finite torsion classes \cite{AIR2014,DIJ2019}. However, this definition enables a lattice-theoretic approach which yields new insights into the structure of the $\tau$-cluster morphism category.

\begin{thm}\label{thm:combintrothm}
    Let $A$ be a finite-dimensional algebra, then we may define the $\tau$-cluster morphism category $\Tfrak(A)$ from the lattice of torsion classes $\tors A$. Moreover, if $\tors A$ is finite, the category is determined entirely by the underlying abstract lattice structure of $\tors A$.
\end{thm}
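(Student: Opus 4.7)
The plan is to realise $\Wfrak(A)$ concretely in terms of intervals in $\tors A$ and then observe that the resulting description refers only to the lattice structure. The first step is objects: the $\tau$-perpendicular wide subcategories of $\mods A$ are in bijection with a distinguished family of intervals of $\tors A$---the \emph{wide intervals} $\wideitv(\tors A)$---by results of \cite{AsaiPfeifer2019, Enomoto2023, Jasso2015}. I would therefore set $\obj \Tfrak(A) := \wideitv(\tors A)$ and transport the rest of the structure of $\Wfrak(A)$ across this bijection.

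For morphisms, recall that a morphism $\Wcal \to \Wcal'$ in $\Wfrak(A)$ is encoded by a support $\tau$-rigid object $R \in \Wcal$ whose $\tau$-perpendicular subcategory inside $\Wcal$ equals $\Wcal'$. Via the AIR bijection \cite{AIR2014, DIJ2019} together with Jasso's reduction \cite{Jasso2015}, such a datum translates to a nested pair $I' \subseteq I$ of wide intervals in $\tors A$, with $I'$ arising from $I$ by a lattice-theoretic operation that records the choice of $R$. Accordingly, I would define a morphism $I \to I'$ in $\Tfrak(A)$ to be exactly such a nested pair, with composition given by concatenation of nestings and identities given by the trivial nestings. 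A functor $\Tfrak(A) \to \Wfrak(A)$ is then essentially a repackaging of the bijections just cited, and bijectivity on objects and on morphism sets is immediate, yielding the equivalence $\Tfrak(A) \cong \Wfrak(A) \cong \Cfrak(A)$.

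For the finiteness clause, when $\tors A$ is finite every torsion class is functorially finite, so the relative AIR correspondence is total and every admissible sub-interval arises from a support $\tau$-rigid object. Combined with the fact that both ``being a wide interval'' and ``arising as a reduction'' can be characterised purely by the cover relations, meets, and joins of $\tors A$, it follows that the entire construction depends only on the abstract lattice: two finite-dimensional algebras $A, B$ with $\tors A \cong \tors B$ as lattices have equivalent $\tau$-cluster morphism categories.

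The principal obstacle I foresee is composition. Composition in $\Wfrak(A)$ is algebraic in nature, involving the assembly of $\tau$-rigid summands and iterated passage to $\tau$-perpendicular subcategories, none of which is manifestly lattice-theoretic. The technical heart of the proof is therefore identifying the correct lattice operation on nested wide intervals that implements this composition, and then verifying that this operation matches the one obtained on the algebraic side. Once that is done, associativity becomes automatic from concatenation of nestings, and the finite-case statement follows because the entire input is lattice-theoretic from the outset.
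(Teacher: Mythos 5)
There is a genuine gap, and it sits exactly where the paper's real work lies. Your object set is wrong: the assignment sending a wide interval $[\Ucal,\Tcal]$ to the wide subcategory $\Ucal^\perp\cap\Tcal$ is surjective but very far from injective (already for $K(1\to 2)$ the intervals $[0,\Fac(\begin{smallmatrix}1\end{smallmatrix})]$ and $[\Fac(\begin{smallmatrix}2\end{smallmatrix}),\mods A]$ give the same wide subcategory), so there is no bijection between $\tau$-perpendicular wide subcategories and wide intervals, and setting $\obj\Tfrak(A):=\wideitv(\tors A)$ with morphisms the nested pairs $I'\subseteq I$ and composition by concatenation just produces the poset category $\itv(\tors A)$ restricted to wide intervals. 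That category has at most one morphism between any two objects and is not equivalent to $\Wfrak(A)$. The correct construction (\cref{defn:latticedef}) takes \emph{equivalence classes} of $\tau$-perpendicular intervals (identified when they carry the same wide subcategory) as objects and \emph{equivalence classes} of nestings as morphisms, identified when intersecting the inner interval with the respective wide subcategory gives the same torsion-class interval. Once these identifications are made, associativity is indeed trivial, but well-definedness of composition is not: one must compose $[f_{[\Ucal,\Tcal][\Vcal_1,\Scal_1]}]$ with $[f_{[\Vcal_2,\Scal_2][\Xcal,\Ycal]}]$ when $[\Vcal_1,\Scal_1]\sim[\Vcal_2,\Scal_2]$ are merely equivalent, which requires transporting a nested $\tau$-perpendicular interval from one representative to the other while preserving its wide subcategory (\cref{lem:composition1}, via the Buan--Marsh/Jasso reduction bijections and brick-label preservation, \cref{thm:wideintiso} and \cref{prop:brickpreserve}). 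Your closing paragraph correctly flags composition as the obstacle, but with intervals themselves as objects and concatenation as composition there is nothing left to prove there --- the missing identifications are precisely what make the problem nontrivial, and they are not addressed.

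Two further points. First, for a general (not $\tau$-tilting finite) algebra, wide intervals strictly overshoot the $\tau$-perpendicular ones: the latter require the bottom torsion class to be functorially finite, and membership in $\ftors A$ is not detectable from the abstract lattice; this is exactly why the paper can define $\Tfrak(A)$ from $\tors A$ in general (using $\tau$-rigid pairs, hence algebraic input) but obtains a purely combinatorial description only when $\tors A$ is finite. Your sketch conflates these two regimes. Second, in the finite case the decisive lattice-theoretic ingredient is the completely join-irreducible labelling of cover relations, which recovers the brick labelling (\cref{lem:labelcoincide}) and hence both the equivalence relation on intervals ($\jirrc[\Ucal_1,\Tcal_1]=\jirrc[\Ucal_2,\Tcal_2]$) and the label-preserving isomorphisms used to identify morphisms (\cref{thm:categoryiso}); an appeal to ``cover relations, meets, and joins'' without this labelling does not suffice to cut out the right equivalence classes.
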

\begin{proof}
    The definition of $\Tfrak(A)$ from $\tors A$ is \cref{defn:latticedef}, the equivalence with previous constructions is \cref{thm:welldefn} and the combinatorial construction from the underlying lattice structure is \cref{thm:categoryiso}.
\end{proof}

This result fits into a larger body of research aiming to recover algebraic objects and structures related to the category finite-dimensional (right) $A$-modules, $\mods A$, via the poset structure of its torsion classes, see for example \cite{BarnardHanson2022exc, DIRRT2017, Enomoto2023, kase2017}. As a consequence of this construction, we obtain the following result for signed ($\tau$-)exceptional sequences, as introduced in \cite{BuanMarsh2021, IgusaTodorov2017}, which are factorisations of morphisms in $\Tfrak(A)$ into irreducible ones \cite[Thm. 11.8]{BuanMarsh2018}.

\begin{cor}\label{cor:introcor1.2}
    Let $A$ and $B$ be finite-dimensional algebras such that  $\tors A \cong \tors B$ and both are finite lattices. Then $\Tfrak(A) \cong \Tfrak(B)$ is an equivalence of categories and there is a bijection
    \[ \{ \text{signed $\tau$-exceptional sequences of $\mods A$}\} \longleftrightarrow \{ \text{signed $\tau$-exceptional sequences of $\mods B$}\}.\]
\end{cor}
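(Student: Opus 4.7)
The plan is to derive this corollary directly from \cref{thm:combintrothm} together with the cited characterisation \cite[Thm.~11.8]{BuanMarsh2018} of signed $\tau$-exceptional sequences as factorisations of morphisms in $\Tfrak(A)$ into irreducibles. The first half, namely the equivalence $\Tfrak(A) \cong \Tfrak(B)$, is essentially immediate: under the finiteness hypothesis, \cref{thm:combintrothm} says that $\Tfrak(A)$ is constructed functorially from the abstract lattice structure of $\tors A$, so any lattice isomorphism $\varphi\colon \tors A \xrightarrow{\sim} \tors B$ transports the combinatorial data on $\tors A$ used to define $\Tfrak(A)$ onto the corresponding data on $\tors B$, yielding an isomorphism of categories $\Tfrak(\varphi)\colon \Tfrak(A) \xrightarrow{\sim} \Tfrak(B)$. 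I would spell this out by noting that both the object set (certain distinguished intervals of the lattice, corresponding to $\tau$-perpendicular wide subcategories) and the hom-sets together with their composition are described purely in terms of joins, meets and covering relations in \cref{defn:latticedef} and \cref{thm:categoryiso}, all of which are preserved by $\varphi$.

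For the second half, I would invoke the characterisation that a signed $\tau$-exceptional sequence in $\mods A$ corresponds to a sequence of composable irreducible morphisms in $\Tfrak(A)$ whose composition is a fixed morphism (or, equivalently, a factorisation of some morphism into irreducibles). Since the equivalence $\Tfrak(\varphi)$ is in particular a bijection on morphisms preserving composition, it sends composable strings to composable strings. It remains to verify that $\Tfrak(\varphi)$ preserves the notion of \emph{irreducible morphism}. Irreducibility is defined by non-factorisability through a non-identity intermediate object, and this is again a property formulated purely inside the category $\Tfrak(A)$, hence preserved by any equivalence.

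Putting these two observations together, the functor $\Tfrak(\varphi)$ induces a bijection between the sets of factorisations of morphisms into irreducibles in $\Tfrak(A)$ and in $\Tfrak(B)$, which by \cite[Thm.~11.8]{BuanMarsh2018} is exactly the desired bijection of signed $\tau$-exceptional sequences.

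The main obstacle I anticipate is not conceptual but book-keeping: one needs to check that the notion of ``irreducible morphism'' used in \cite{BuanMarsh2018} to index signed $\tau$-exceptional sequences agrees with the notion that is manifestly preserved by the lattice-theoretic equivalence. Once the definitions in \cref{defn:latticedef} are matched up with the algebraic definition of $\Wfrak(A)$ through \cref{thm:welldefn}, this becomes a routine check, but it is the point at which the argument could fail if, for instance, irreducibility in $\Wfrak(A)$ were sensitive to extra data (such as the underlying wide subcategory as a full subcategory of $\mods A$) rather than just to the abstract categorical structure of $\Tfrak(A)$.
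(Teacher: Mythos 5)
Your proposal follows essentially the same route as the paper: the equivalence $\Tfrak(A)\cong\Tfrak(B)$ comes from the purely lattice-theoretic construction (the paper passes through the abstract lattice $L$ via \cref{thm:categoryiso} in \cref{cor:equivcats}, which is what your transport along $\varphi$ amounts to), and the bijection of signed $\tau$-exceptional sequences is obtained exactly as in \cref{cor:signedexcep}, by noting that a fully faithful equivalence bijects $\Hom$-sets and hence factorisations of morphisms into irreducibles. Your extra remark that irreducibility is an intrinsic categorical notion preserved by the equivalence is a correct (and slightly more explicit) version of the step the paper leaves implicit.
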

\begin{proof}
    The equivalence of categories is \cref{cor:equivcats}, and the bijection of signed $\tau$-exceptional sequences is \cref{cor:signedexcep}.
\end{proof}

For example, an algebra $A$ and its quotient $A/\langle c \rangle$ satisfy $\tors A \cong \tors A/\langle c \rangle$ when $c$ is a central element contained in the Jacobson radical by \cite{EJR2018}. This result complements the bijection between (unsigned) $\tau$-exceptional sequences of \cite{BarnardHanson2022exc} for algebras $A$ and $B$ such that $\tors A \cong \tors B$ and both are finite lattices, using a combinatorial description of such sequences as $\kappa^d$-exceptional sequences. The combination of \cite[Thm. 8.10]{BarnardHanson2022exc} and \cref{cor:introcor1.2} suggests that the recently defined mutation of $\tau$-exceptional sequences \cite{BHM2024} may be encoded in the lattice of torsion classes as well. \\

To show that the classifying space $\Bcal \Tfrak(A)$ of the $\tau$-cluster morphism category is a $K(\pi,1)$ space, it is sufficient is to find a faithful (group) functor to some groupoid with one object and study a combinatorial (pairwise compatibility) condition \cite{Igusa2022}. This is the standard approach taken in \cite{BarnardHanson2022,HansonIgusa2021, HansonIgusaPW2SMC, IgusaTodorov2017, IgusaTodorov22, Kaipelcatpartfan}. As another consequence of \cref{cor:introcor1.2} we can extend the existence of a faithful group functor from $\Tfrak(A)$ to $\Tfrak(B)$ for algebras $B$ whose lattice of torsion classes is finite and isomorphic to that of $A$. Such a functor is known to exist for hereditary algebras \cite{IgusaTodorov22}, $K$-stone algebras \cite{HansonIgusaPW2SMC} and algebras whose $g$-vector fan is a finite hyperplane arrangement \cite{Kaipelcatpartfan}. \\

Another advantage of defining $\Tfrak(A)$ from the lattice of torsion classes is that it allows us to relate $\Tfrak(A)$ and $\Tfrak(A/I)$ where $I$ is any ideal of $A$. In this situation, the relationship is determined by the lattice congruence on $\tors A$ induced by the ideal $I$, see \cite{DIRRT2017}. Previously, the only connection established between the $\tau$-cluster morphism categories $\Tfrak(A)$ and $\Tfrak(B)$ of two finite-dimensional algebras $A$ and $B$ was in the case where $\mods B$ is equivalent to a wide subcategory of $\mods A$. It was shown in \cite{BarnardHanson2022, BuanHanson2017} that $\Tfrak(B)$ is a full subcategory of $\Tfrak(A)$. Our second main result is the following structural theorem:

\begin{thm} \label{thm:introthm}
    Let $A$ be finite-dimensional and $I$ any ideal of $A$. There exists a functor $F_I: \Tfrak(A) \to \Tfrak(A/I)$.
    If moreover the lattice of torsion classes is finite, then:
    \begin{enumerate}
        \item $F_I$ is surjective-on-objects;
        \item $F_I$ is faithful if and only if $\tors A \cong \tors A/I$;
        \item $F_I$ is full if and only if $\tors A \cong \tors A/I \times \tors B$ for some finite-dimensional algebra $B$;
        \item Every morphism of $\Tfrak(A/I)$ lies in the essential image of $F_I$;
        \item $F_I$ reflects composition of morphisms;
        \item $F_I$ is a regular epimorphism in $\mathcal{C}\mathrm{at}$, the category of small categories;
        \item The classifying space $\Bcal \Tfrak(A/I)$ is a quotient space of the classifying space $\Bcal \Tfrak(A)$;
        \item There exists a surjective group homomorphism $G(A) \to G(A/I)$ between picture groups.
    \end{enumerate}
\end{thm}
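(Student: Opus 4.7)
The plan is to exploit \cref{thm:combintrothm}: when $\tors A$ is finite, both $\Tfrak(A)$ and $\Tfrak(A/I)$ are determined combinatorially by their lattices of torsion classes. By \cite{DIRRT2017}, an ideal $I$ of $A$ induces a lattice congruence $\Theta_I$ on $\tors A$ whose quotient is naturally isomorphic to $\tors A/I$; write $\eta: \tors A \twoheadrightarrow \tors A/I$ for the associated surjective lattice morphism. The first step is to verify that $\eta$ preserves the combinatorial data (intervals and the relations between them) used to describe $\Tfrak(A)$, which produces the functor $F_I$. The same prescription extends to the general (possibly infinite) case by applying $\eta$ directly in the algebraic construction of $\Tfrak(A)$ from torsion classes established in \cref{thm:welldefn}.

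Items (1), (4) and (5) are then direct consequences of $\eta$ being a surjective lattice morphism. Every object of $\Tfrak(A/I)$ corresponds to an element (or interval) in $\tors A/I$ which lifts under $\eta$, giving (1); similarly every morphism of $\Tfrak(A/I)$ comes from combinatorial data that admits a preimage in $\tors A$, giving (4). For (5), composability in $\Tfrak$ is encoded by compatibility conditions on the underlying intervals, and a small congruence-theoretic computation shows that if the $\eta$-images are composable then so are the originals. The same viewpoint settles (2): two parallel morphisms of $\Tfrak(A)$ have equal image under $F_I$ precisely when their underlying intervals are $\Theta_I$-equivalent, so $F_I$ is faithful if and only if $\Theta_I$ is trivial, i.e.\ $\tors A \cong \tors A/I$.

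The main obstacle is the forward direction of (3). The sufficient direction is routine: a product decomposition $\tors A \cong \tors A/I \times \tors B$ realises $\eta$ as projection onto the first factor, and every morphism in $\Tfrak(A/I)$ between fixed preimages lifts to $\Tfrak(A)$ by taking the identity on the $\tors B$-component. For the converse, the idea is that the existence of preimages for every morphism forces $\eta$ to split uniformly across the lattice: using that morphisms of $\Tfrak$ are controlled by intervals, fullness translates into a uniform lifting property for intervals in $\tors A/I$, and this lifting property is exactly what characterises a direct product decomposition $\tors A \cong \tors A/I \times L$ for some finite lattice $L$. To close the argument one then identifies $L$ with $\tors B$ for a suitable finite-dimensional algebra $B$ using the classification of quotient lattices of $\tors A$ from \cite{DIRRT2017}.

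The remaining items follow formally. Combining (1), (4) and (5), the functor $F_I$ is surjective on objects and on morphisms while reflecting composition, and this identifies $F_I$ as the coequaliser in $\mathcal{C}\mathrm{at}$ of its kernel pair, yielding (6). The classifying-space functor $\Bcal$ sends such a regular epimorphism to a quotient of CW-complexes, since $\Bcal F_I$ is surjective on cells of every dimension and its fibres are the $\Theta_I$-equivalence classes, which gives (7). Finally, (8) is obtained by applying $\pi_1$ to (7); alternatively, one may construct the surjection $G(A) \twoheadrightarrow G(A/I)$ directly from a presentation of the picture group in terms of covering relations of $\tors A$, each of which is sent by $\eta$ to either a covering relation or an equality in $\tors A/I$.
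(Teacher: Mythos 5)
Your overall strategy (push everything through the lattice congruence $\Phi_I$ of \cite{DIRRT2017}) is the paper's strategy too, but several of the steps you declare routine are exactly where the real work lies, and some of your justifications would fail. First, the existence of $F_I$ for arbitrary $A$ does not follow from ``applying $\eta$ in the construction'': one must prove that the image of a $\tau$-perpendicular interval is again $\tau$-perpendicular (this uses that $\pi_{\downarrow}^{\Phi_I}(\tors A)$ is a join-subsemilattice, cf.\ \cref{lem:tauperptotauperp}), and that the equivalence relations on objects and morphisms are respected (\cref{cor:wideinquot}, \cref{wideintisoquot}); none of this is addressed. Second, for (1) and (4) your ``lifts under $\eta$'' claim hides the central difficulty: the preimage of a $\tau$-perpendicular interval under $\Phi_I$ is an interval but not $\tau$-perpendicular in general (the paper lifts instead via semibricks and \cref{thm:Ringelbij}, using $\tau$-tilting finiteness; \cref{exmp:taufinite} shows the statement genuinely fails otherwise), and the natural lift of intervals is \emph{not} inclusion-preserving (\cref{rmk:noninclusion}), so lifting a morphism requires the corrected construction of \cref{prop:essentialimage} via restricting the congruence to the lifted ambient interval. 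Your sketch gives no mechanism for either point.

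Third, the forward direction of (3) is the hardest part of the theorem and your argument for it is not a proof: there is no ``uniform interval-lifting property characterises direct product decompositions'' statement available (lattice congruences can admit plentiful lifts without the lattice being a product), and no classification of quotient lattices of $\tors A$ that hands you the algebra $B$. The paper instead extracts from fullness an equality of brick labels across all congruence classes, deduces $\Ext$-vanishing between two sets of simples by polygon and brick-labelling arguments, and builds $B$ explicitly as a quotient of $A$ by an idempotent-generated ideal (\cref{lem:Ffulliff}, \cref{lem:productconverse}); even your ``routine'' converse direction needs the identification of $\Phi_I$ with the projection congruence, which is not automatic from an abstract isomorphism $\tors A\cong\tors A/I\times\tors B$. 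Finally, two of your formal deductions are unsound as stated: the classifying space functor does not send regular epimorphisms in $\mathcal{C}\mathrm{at}$ to quotient maps in general (the nerve is a right adjoint and does not preserve coequalisers; the paper instead uses the cubical CW structure of \cite{HansonIgusa2021} together with the generalised congruences of \cite{BBP1999} in \cref{thm:spacequotient}), and applying $\pi_1$ to a quotient map does not yield a surjection of fundamental groups (collapse an interval to a circle), so (8) cannot be obtained from (7); your fallback argument via the presentation by brick labels and cover relations is, however, essentially the paper's proof of \cref{prop:picgroupquot}.
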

\begin{proof}
    The existence of $F_I$ is \cref{thm:inducedfunctor}. If $\tors A$ is finite then (1) is \cref{prop:surjonobj}, (2) is a special case of \cref{lem:FIfaithfuliff}, (3) is \cref{lem:Ffulliff}, (4) is \cref{prop:essentialimage}, (5) is \cref{lem:reflectcomp}, (6) is \cref{cor:regularepi}, (7) is \cref{thm:spacequotient} and (8) is \cref{prop:picgroupquot}.
\end{proof}

The article is organised as follows. In \cref{sec:background} we recall lattice-theoretic definitions of lattices and connect them with $\tau$-tilting theory. We also provide the algebraic and geometric definitions of the $\tau$-cluster morphism category for comparison. In \cref{sec:definition} we give a new definition of the $\tau$-cluster morphism category via $\tau$-perpendicular intervals of the lattice of torsion classes and show that it is a well-defined category and equivalent to the previous constructions. In \cref{sec:tauequiv} we translate the definition into a purely combinatorial setting using join-intervals and completely join-irreducible elements and conclude that the category is determined by the underlying poset whenever it is is finite. We use the new definition in \cref{sec:quotients} to establish a functor $F_I: \Tfrak(A) \to \Tfrak(A/I)$ using the surjective morphism of lattices $- \cap \mods A/I: \tors A \to \tors A/I$ induced by an ideal $I \in \ideal A$. When the lattice is finite, we demonstrate how to lift $\tau$-perpendicular intervals in \cref{sec:epimorphisms} and use this to show that $F_I$ reflects composition of morphisms and that it is a regular epimorphism in the category of small categories. In \cref{sec:classifyingspace} we establish similar quotient relationships for the classifying space and picture group. In the final \cref{sec:examples} we demonstrate the necessity of the finiteness assumption for previous results and compare the definition of $\Tfrak(A)$ with those of $\Wfrak(A)$ and $\Cfrak(A)$ using an example.

\section{Background} \label{sec:background}
Throughout, $K$ is a field and $A$ a finite-dimensional $K$-algebra. The category of finitely-generated (right) modules is denoted by $\mods A$. A \textit{torsion class} is a full subcategory of $\mods A$ which is closed under taking factor modules and extensions \cite{Dickson66}. Dually, a \textit{torsion-free} class is a full subcategory of $\mods A$ which is closed under taking submodules and extensions. A full subcategory of $\mods A$ is called \textit{wide} if it is closed under kernels, cokernels and extensions. Both torsion classes and wide subcategories of $\mods A$ form partially ordered sets (posets) under inclusion and we denote these posets by $\tors A$ and $\wide A$ respectively. We now recall the definition of lattices, for a textbook reference see for example \cite{Gratzer}.

\begin{defn}
    Let $L$ be a poset.
    \begin{enumerate}
        \item $L$ is called a \textit{join-semilattice} if there exists a unique minimal common upper bound, the \textit{join} $x \lor y \in L$ of $x$ and $y$ for all $x,y \in L$.
        \item $L$ is called a \textit{meet-semilattice} if there exists a unique maximal common lower bound, the \textit{meet} $x \land y \in L$ of $x$ and $y$ for all $x,y \in L$.
    \end{enumerate}
    A join-semilattice (resp. meet-semilattice) $L$ is called \textit{complete} if every subset $S = \{x_1, \dots, x_r\} \subseteq L$ admits a unique maximal upper bound $\bigvee S \coloneqq x_1 \lor \dots \lor x_r$ (resp. a unique maximal lower bound $\bigwedge S \coloneqq x_1 \land \dots \land x_r$).
\end{defn}

\begin{defn}
    A poset $L$ which is both a join-semilattice and a meet-semilattice is called a \textit{lattice}. A lattice is \textit{complete} if it is both a complete join-semilattice and a complete meet-semilattice.
\end{defn}

\begin{lem} \cite[Lem. 14]{Gratzer}\label{lem:gratzer}
    The following coincide for a poset $L$:
    \begin{enumerate}
        \item $L$ is a complete join-semilattice.
        \item $L$ is a complete meet-semilattice.
        \item $L$ is complete lattice.
    \end{enumerate}
\end{lem}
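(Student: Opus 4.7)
The plan is to show the three conditions are equivalent by proving $(3) \Rightarrow (1)$, $(3) \Rightarrow (2)$ (both immediate), $(1) \Rightarrow (2)$, and invoking the dual of this last implication to obtain $(2) \Rightarrow (1)$; combining $(1)$ and $(2)$ then yields $(3)$. So only $(1) \Rightarrow (2)$ carries any real content.

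To prove $(1) \Rightarrow (2)$, assume $L$ is a complete join-semilattice. First I would observe that $L$ has a bottom element: applying the hypothesis to the empty subset $S = \varnothing \subseteq L$ produces the join $\bigvee \varnothing$, which is a minimum of $L$ (every element of $L$ is vacuously an upper bound of $\varnothing$). Call this element $\hat{0}$. Now fix an arbitrary subset $S \subseteq L$ and consider the set of lower bounds
\[ L_S \coloneqq \{\, y \in L : y \leq s \text{ for all } s \in S \,\}. \]
Since $\hat{0} \in L_S$, the set $L_S$ is non-empty, so by hypothesis the join $m \coloneqq \bigvee L_S$ exists in $L$.

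The next step is to verify that $m = \bigwedge S$. For this I would argue in two parts: first, each $s \in S$ is by definition an upper bound of $L_S$, so $m \leq s$, showing $m$ is itself a lower bound of $S$; second, if $y$ is any lower bound of $S$, then $y \in L_S$ and hence $y \leq m$ by definition of join. Together these give that $m$ is the greatest lower bound, i.e.\ $\bigwedge S$ exists and equals $\bigvee L_S$. Since $S$ was arbitrary, $L$ is a complete meet-semilattice. The implication $(2) \Rightarrow (1)$ follows by reversing the order in this argument (or equivalently applying it to $L^{\mathrm{op}}$).

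The only subtle point, and the one I expect to be the main place to be careful, is ensuring $L_S$ is non-empty so that its join exists. This is exactly why the empty-join step producing $\hat{0}$ must come first; without a bottom element one could not even begin to form $\bigvee L_S$ when $S$ has no natural lower bound. Everything else is a direct translation between ``upper bound of the set of lower bounds'' and ``greatest lower bound''.
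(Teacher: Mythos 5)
Your proof is correct and is exactly the standard argument behind the cited result (the paper itself gives no proof, merely referring to Gr\"atzer): obtain the bottom element as the empty join, then realise $\bigwedge S$ as $\bigvee$ of the set of lower bounds of $S$, with the reverse implication by order duality. Your emphasis on allowing $S=\varnothing$ is the right place to be careful, since with only nonempty joins the statement fails (e.g.\ the poset $(0,1]$), so reading ``every subset'' as including the empty set is essential.
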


This makes the following easy to see.

\begin{exmp}\label{exmp:completelattice}
    The partially ordered sets $\tors A$ and $\wide A$ are complete meet-semilattices with meet given by intersection. By \cref{lem:gratzer} they are therefore complete lattices. For some special properties of the lattice $\tors A$ see for example \cite[Thm. 1.3]{DIRRT2017}
\end{exmp}

\begin{defn}
    Let $L$ be a join-semilattice (resp. meet-semilattice), a \textit{join-subsemilattice} (resp. \textit{meet-subsemilattice}) $K$ is a subset $K \subseteq L$ such that for all $x,y \in K$ we have $x \lor y \in K$ (resp. $x \land y \in K$). A subset $K$ of a lattice $L$ is a \textit{sublattice} if it is both a join-subsemilattice and meet-subsemilattice.
\end{defn}

\begin{defn}
    An equivalence relation $\equiv$ on a complete lattice $L$ is called a \textit{complete lattice congruence} if for an indexing set $I$ and families $\{x_i\}_{i \in I}, \{y_i\}_{i \in I} \subseteq L$ the following holds:
    \[ x_i \equiv y_i \text{ for all } i \in I \quad \Longrightarrow \quad \bigvee \{x_i : i \in I\} \equiv \bigvee \{y_i : i \in I\} \text{ and } \bigwedge \{x_i : i \in I\} \equiv \bigwedge \{y_i : i \in I\} .\]
    Given the corresponding set of partitions $\Phi_{\equiv}$ of an equivalence relation $\equiv$, define the \textit{quotient lattice} $L/\equiv$ as the lattice whose elements are elements of $\Phi$ and such that for $C_1, C_2 \in \Phi$ the element $C_1 \lor C_2$ is the equivalence class $C_3 \in \Phi$ containing $x \lor y$ for some $x \in C_1$ and $y \in C_2$. The meet $C_1 \land C_2$ is defined dually.
\end{defn}

\begin{defn}
    Let $L_1, L_2$ be complete lattices. A map $\eta: L_1 \to L_2$ is a \textit{morphism of complete lattices} if $\eta(\bigvee_{L_1} S) = \bigvee_{L_2} \eta(S)$ and $\eta(\bigwedge_{L_1} S) = \bigwedge_{L_2} \eta(S)$ for all $S \subseteq L_1$.
\end{defn}

The following result will play an essential role throughout this article and act as an example of the notions defined above.
\begin{thm}\cite[Thm. 5.12]{DIRRT2017}\label{thm:DIRRTquot}
    For any ideal $I \in \ideal A$ the map $\overline{(-)}_I: \Tcal \mapsto \overline{(\Tcal)}_I \coloneqq \Tcal \cap \mods A/I$ is a surjective morphism of complete lattices $\tors A \twoheadrightarrow \tors A/I$ and the induced equivalence relation $\Phi_I$ on $\tors A$ is a complete lattice congruence.
\end{thm}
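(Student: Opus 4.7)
The plan is to establish four claims in order: (a) $\Tcal \cap \mods A/I$ is a torsion class of $\mods A/I$; (b) the map $\overline{(-)}_I$ is surjective; (c) it preserves arbitrary meets and joins; and (d) the induced equivalence $\Phi_I$ is a complete lattice congruence. The observation feeding every step is that $\mods A/I \subseteq \mods A$ is the full subcategory of modules annihilated by $I$, and is closed in $\mods A$ under submodules and quotients; in particular, every subquotient of an $I$-annihilated module is again $I$-annihilated.

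For (a), closure of $\Tcal \cap \mods A/I$ under quotients and extensions in $\mods A/I$ is inherited from the corresponding closure of $\Tcal$, since quotients and extensions in $\mods A/I$ are also quotients and extensions in $\mods A$. For (b), given $\Ucal \in \tors A/I$, I would set $\Tcal := \Filt(\Fac(\Ucal))$, the smallest torsion class of $\mods A$ containing $\Ucal$. The inclusion $\Ucal \subseteq \Tcal \cap \mods A/I$ is clear. Conversely, any $M \in \Tcal \cap \mods A/I$ admits a filtration whose subquotients lie in $\Fac(\Ucal)$; because $M$ is $I$-annihilated, so is each subquotient. Such a subquotient is a factor of some $U \in \Ucal$, and if it is annihilated by $I$, it is also a factor of $U/UI \in \Ucal$ computed inside $\mods A/I$, hence lies in $\Ucal$. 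Closure of $\Ucal$ under extensions in $\mods A/I$ then yields $M \in \Ucal$, so $\overline{(\Tcal)}_I = \Ucal$.

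For (c), meets are intersections in both lattices, so $\left(\bigcap_i \Tcal_i\right) \cap \mods A/I = \bigcap_i (\Tcal_i \cap \mods A/I)$ is tautological. For joins, the containment $\bigvee_i \overline{(\Tcal_i)}_I \subseteq \overline{\left(\bigvee_i \Tcal_i\right)}_I$ follows from monotonicity of $\overline{(-)}_I$, and the reverse uses the same filtration argument as in (b) applied to the explicit description $\bigvee_i \Tcal_i = \Filt(\Fac(\bigcup_i \Tcal_i))$. Finally, (d) follows formally from (c) and surjectivity: if $\overline{(x_i)}_I = \overline{(y_i)}_I$ for every $i$, then join preservation gives $\overline{\left(\bigvee_i x_i\right)}_I = \bigvee_i \overline{(x_i)}_I = \bigvee_i \overline{(y_i)}_I = \overline{\left(\bigvee_i y_i\right)}_I$, and dually for meets, so $\Phi_I$ respects arbitrary joins and meets.

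The main technical obstacle I would expect is the filtration/$I$-annihilation argument powering both (b) and (c): one must carefully observe that any filtration in $\mods A$ of an $I$-annihilated module has pieces which are automatically $I$-annihilated, so that closure of $\Ucal$ under factors and extensions \emph{inside} $\mods A/I$ (rather than only inside $\mods A$) can be invoked. Everything else amounts to bookkeeping with the standard identity $\bigvee = \Filt \circ \Fac$ for joins in $\tors$.
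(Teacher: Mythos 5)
Your argument is correct, but note that the paper itself offers no proof of this statement: it is imported verbatim as a citation of [DIRRT2017, Thm.\ 5.12], so the honest comparison is between your self-contained argument and that reference rather than anything internal to the paper. Your route is the natural one and all the essential points check out: $\mods A/I$ sits inside $\mods A$ as the full subcategory of $I$-annihilated modules, closed under subquotients, so (a) is immediate; surjectivity follows from $\Tcal=\Filt(\Fac\,\Ucal)$ together with the observation that every subquotient appearing in a filtration of an $I$-annihilated module is again $I$-annihilated; meets are intersections on both sides, and join-preservation repeats the filtration argument for $\bigvee_i\Tcal_i=\Filt(\Fac(\bigcup_i\Tcal_i))$ — this is where the factor-through-$U/UI$ device is genuinely needed, since there $U$ ranges over modules of $\Tcal_i$ that need not be $I$-annihilated. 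Two cosmetic remarks: in step (b) the detour through $U/UI$ is vacuous, because $U\in\Ucal\subseteq\mods A/I$ already forces $UI=0$, so the quotient map is itself a map in $\mods A/I$; and step (d) uses only the preservation of arbitrary joins and meets from (c) — surjectivity plays no role there (it is needed only for the word ``surjective'' in the statement). Neither affects correctness; your proof is a complete and elementary verification of the cited theorem, essentially along the same lines as the original argument of Demonet–Iyama–Reading–Reiten–Thomas, which likewise reduces everything to the trivial behaviour of intersections and the $\Filt\circ\Fac$ description of joins.
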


For convenience we write $\overline{(-)} \coloneqq \overline{(-)}_I$ and $\overline{\Tcal} \coloneqq \overline{(\Tcal)}_I$ when no confusion about $I \in \ideal A$ may arise. We can describe lattice congruences in the following way.

\begin{prop} \label{prop:quotlattiso}
    Let $\Phi$ be a complete lattice congruence on $L$, then the equivalence classes $[x] \in \Phi$ are intervals $[\pi_{\downarrow}^{\Phi} x, \pi_{\uparrow}^{\Phi} x] \subseteq L$, where $\pi_{\downarrow}^{\Phi}: L \to \pi_{\downarrow}^\Phi L \coloneqq \image \pi_{\downarrow}^\Phi$ is a morphism of complete lattices, sending elements $x \in L$ to the bottom element in their equivalence class, and $\pi_{\uparrow}^{\Phi}$ is defined dually. Moreover, $\pi_{\downarrow}^{\Phi}(L)$ and $\pi_{\uparrow}^{\Phi}(L)$ are isomorphic to $L/\Phi$.
\end{prop}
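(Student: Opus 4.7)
The plan is to extract the minimum and maximum of each equivalence class from the completeness hypothesis, verify that classes are intervals, and then transport the lattice structure of $L/\Phi$ onto $\pi_{\downarrow}^{\Phi} L$. Fixing a class $C \in \Phi$ and any $z \in C$, the complete lattice congruence property applied to the indexed families $\{x\}_{x \in C}$ and $\{z\}_{x \in C}$ (for which $x \equiv z$ for every $x \in C$) yields $\bigwedge C \equiv z \equiv \bigvee C$, so both extrema lie in $C$. This makes $\pi_{\downarrow}^{\Phi} x \coloneqq \bigwedge [x]$ and $\pi_{\uparrow}^{\Phi} x \coloneqq \bigvee [x]$ well-defined as elements of $[x]$, and in particular gives the inclusion $[x] \subseteq [\pi_{\downarrow}^{\Phi} x, \pi_{\uparrow}^{\Phi} x]$.

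For the reverse containment, I would take $z$ with $\pi_{\downarrow}^{\Phi} x \leq z \leq \pi_{\uparrow}^{\Phi} x$. Since $\pi_{\downarrow}^{\Phi} x \equiv \pi_{\uparrow}^{\Phi} x$ and $z \equiv z$, applying the congruence to the meets of the two-element families $(z, \pi_{\downarrow}^{\Phi} x)$ and $(z, \pi_{\uparrow}^{\Phi} x)$ gives $z \wedge \pi_{\downarrow}^{\Phi} x \equiv z \wedge \pi_{\uparrow}^{\Phi} x$; the assumed inequalities collapse this to $\pi_{\downarrow}^{\Phi} x \equiv z$, so $z \in [x]$. Hence $[x] = [\pi_{\downarrow}^{\Phi} x, \pi_{\uparrow}^{\Phi} x]$, and the order-dual argument immediately handles $\pi_{\uparrow}^{\Phi}$.

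For the lattice morphism and isomorphism claims, I plan to exhibit the bijection $\pi_{\downarrow}^{\Phi} L \leftrightarrow L/\Phi$ given by $y \mapsto [y]$ with inverse $C \mapsto \bigwedge C$, and then transport the complete lattice structure of $L/\Phi$ (which exists by the very definition of a complete congruence) along this bijection. Under this transported structure, the map $\pi_{\downarrow}^{\Phi}: L \to \pi_{\downarrow}^{\Phi} L$ factors as the canonical quotient $L \to L/\Phi$ followed by the bijection, so it is automatically a morphism of complete lattices, and the isomorphism $\pi_{\downarrow}^{\Phi} L \cong L/\Phi$ is built in. The analogous statements for $\pi_{\uparrow}^{\Phi}$ follow by the order-dual argument.

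The main subtlety I anticipate is choosing the correct families to feed into the complete-congruence axiom when proving that classes are intervals; once the meeting-with-$z$ trick above is identified, the remaining steps reduce to transport of structure along a bijection and are essentially bookkeeping. A secondary point worth being careful about is that the join in $\pi_{\downarrow}^{\Phi} L$ is \emph{not} generally the join inherited from $L$ (it is computed as $\pi_{\downarrow}^{\Phi}$ applied to the $L$-join), which is precisely why the transport-of-structure viewpoint is cleaner than trying to check the morphism axiom directly in $L$.
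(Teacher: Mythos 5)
Your argument is correct in substance but takes a genuinely different route from the paper: the paper does not prove this proposition from scratch, it cites the finite-lattice results of Reading ([Prop.~9-5.2], [Prop.~9-5.5] and their duals) together with the exercise extending them to complete lattices and complete congruences, whereas you give a direct, self-contained proof. Your interval argument is exactly right: feeding the families $\{x\}_{x\in C}$, $\{z\}_{x\in C}$ into the complete-congruence axiom puts $\bigwedge C$ and $\bigvee C$ into $C$, and the ``meet with $z$'' trick ($z\wedge\pi_{\downarrow}^{\Phi}x \equiv z\wedge\pi_{\uparrow}^{\Phi}x$, which collapses to $\pi_{\downarrow}^{\Phi}x\equiv z$) gives the reverse containment. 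This buys a proof that is independent of the literature and visibly uses only the congruence axiom; the paper's citation route buys brevity and compatibility with the other facts it later imports from Reading (e.g.\ that $\pi_{\downarrow}^{\Phi}L$ is a join-subsemilattice, used in the proof of the lifting of $\tau$-perpendicular intervals).

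Two caveats on your last step. First, the transport-of-structure move makes the morphism and isomorphism claims true by fiat, but the content of the proposition (and the way $\pi_{\downarrow}^{\Phi}L$ is used later, as a subposet of $\tors A$) requires that the transported order coincide with the order induced from $L$. This is a one-line consequence of the interval property and should be said: if $a,b\in\pi_{\downarrow}^{\Phi}L$ and $[a]\le[b]$, then $a\wedge b\equiv a$, so minimality of $a$ in its class gives $a\le a\wedge b\le b$; the converse is immediate. With that line added, your bijection $y\mapsto[y]$ is an order isomorphism and everything you transport is the honest induced structure. Second, your closing parenthetical is backwards: the image of $\pi_{\downarrow}^{\Phi}$ is in fact closed under the joins of $L$ (if $a_i$ are minimal in their classes and $d=\pi_{\downarrow}^{\Phi}\bigl(\bigvee a_i\bigr)$, then $a_i\wedge d\equiv a_i$ forces $a_i\le d$, so $d=\bigvee a_i$), so joins in $\pi_{\downarrow}^{\Phi}L$ are inherited; it is the meets that must be recomputed as $\pi_{\downarrow}^{\Phi}$ of the $L$-meet, and dually for $\pi_{\uparrow}^{\Phi}$. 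This slip is not used in your proof, but as stated it would wrongly suggest that the transported and induced structures differ.
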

\begin{proof}
	If $L$ is a finite lattice, then \cite[Prop. 9-5.2]{Reading2016} implies that an equivalence class $[x] \in \Phi$ corresponds to an interval $[\pi_{\downarrow}^{\Phi} x, \pi_{\uparrow}^{\Phi} x] \subseteq L$ and \cite[Prop. 9-5.5]{Reading2016} states that $\pi_{\downarrow}^{\Phi}: L \to \pi_{\downarrow}^\Phi L$ is a morphism of lattices and that $L/\Phi$ is equivalent to $\pi_{\downarrow}^\Phi L$. The same result holds for $\pi_{\uparrow}^\Phi$ by the dual of \cite[Prop. 9-5.5]{Reading2016}.  By \cite[Exercise 9.42]{Reading2016} these result hold analogously for complete lattices and complete lattice congruences.
\end{proof}

For a complete lattice congruence $\Phi$ on a complete lattice $L$, the surjective morphisms of lattices $L \to L/\Phi$ is sometimes also denoted by $\Phi$. The first part of \cref{prop:quotlattiso} then states that for any $x \in L/\Phi$, the preimage $\Phi^{-1}(x)$ is an interval $[\pi_{\downarrow} \Phi^{-1}(x), \pi_{\uparrow} \Phi^{-1}(x)]$.

\subsection{$\tau$-tilting theory}
Let $\tau$ denote the Auslander-Reiten translation of $\mods A$. In \cite{AIR2014} the authors introduce $\tau$-tilting theory as a simultaneous generalisation of classical tilting theory to higher projective dimensions and a completion of classical tilting theory from the viewpoint of mutation. Denote by $\proj A$ the full subcategory of $\mods A$ consisting of projective modules and for $M \in \mods A$ denote by $|M|$ the number of non-isomorphic indecomposable direct summands of $M$.

\begin{defn}\cite[Def. 0.3]{AIR2014}
    A pair $(M,P) \in \mods A \times \proj A$ is called \textit{$\tau$-rigid} if $\Hom_A(M, \tau M) = 0$ and $\Hom_A(P,M) =0$. It is called $\tau$-tilting if additionally $|M|+ |P|=|A|$.
\end{defn}

For a module $M$ denote by $\Fac M$ the full subcategory of $\mods A$ consisting of factor modules of $M$ and by $M^\perp \coloneqq \{X \in \mods A: \Hom_A(M,X) =0\}$ and dually for ${}^\perp M$. It was shown in \cite[Thm. 5.10]{AuslanderSmalo1980} that $\Fac M$ is a torsion class if $M$ is $\tau$-rigid. Torsion classes arising in this way are called \textit{functorially-finite} and conversely each functorially-finite torsion class arises as $\Fac M$ of some $\tau$-rigid module $M \in \mods A$ by \cite[Prop. 1.1, Prop. 1.2]{AIR2014}. We denote the collection of functorially-finite torsion classes by $\ftors A$. More precisely, by \cite[Thm. 2.7]{AIR2014} we have bijection between $\tau$-tilting pairs and $\ftors A$, which induced poset structure on the set of $\tau$-tilting pairs. By \cite{IRTT15} $\ftors A$ forms a complete lattice if and only if $\ftors A = \tors A$ if and only if there are only finitely many $\tau$-rigid pairs. An algebra $A$ satisfying these equivalent conditions is called \textit{$\tau$-tilting finite}.

\begin{prop}\cite[Cor. 2.13]{AIR2014}
    Every $\tau$-rigid pair $(M,P)$ gives rise to two torsion classes $\Fac M \in \ftors A$ and ${}^\perp \tau M \cap P^\perp \in \ftors A$. Moreover, $\Fac M \subseteq {}^\perp \tau M \cap P^\perp$.
\end{prop}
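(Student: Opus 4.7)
The plan is to verify each of the three claims separately, combining closure properties of the relevant subcategories with the Auslander-Reiten duality formula and the theory of Bongartz completions. The key technical input is the isomorphism $D\Ext^1_A(X,Y) \cong \overline{\Hom}_A(Y, \tau X) \subseteq \Hom_A(Y, \tau X)$, which converts the $\tau$-rigidity hypothesis $\Hom_A(M, \tau M) = 0$ into a vanishing statement for extensions of $M$ by modules in $\Fac M$.

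First I would check that $\Fac M \in \ftors A$. Closure under factor modules is immediate from the definition. For closure under extensions, suppose $0 \to X \to Y \to Z \to 0$ is exact with $X,Z \in \Fac M$. I would first show $\Ext^1_A(M,X) = 0$: fixing a surjection $M^a \twoheadrightarrow X$, any map $X \to \tau M$ composes with it to give a map $M^a \to \tau M$, which vanishes by $\tau$-rigidity, and hence $\Hom_A(X, \tau M) = 0$; Auslander-Reiten duality then gives the extension vanishing. A surjection $M^b \twoheadrightarrow Z$ therefore lifts to a map $M^b \to Y$, and together with $M^a \twoheadrightarrow X \hookrightarrow Y$ this yields a surjection $M^{a+b} \twoheadrightarrow Y$, so $Y \in \Fac M$. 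Functorial finiteness of $\Fac M$ follows from the fact that $M$ is a generator.

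Next I would show that ${}^\perp \tau M \cap P^\perp$ is a torsion class and that $\Fac M$ is contained in it. Both ${}^\perp \tau M$ and $P^\perp$ are torsion classes by elementary closure arguments: ${}^\perp \tau M$ is closed under factors and extensions by left exactness of $\Hom_A(-, \tau M)$, while for $P^\perp$ projectivity of $P$ ensures that any map $P \to Y/X$ lifts to a map $P \to Y$, and extensions stay in $P^\perp$ via the long exact $\Ext$-sequence; the intersection of two torsion classes is again a torsion class. The inclusion $\Fac M \subseteq {}^\perp \tau M \cap P^\perp$ uses the same surjection trick: for $X \in \Fac M$ with $M^n \twoheadrightarrow X$, any map $X \to \tau M$ pulls back to $M^n \to \tau M$ and any map $P \to X$ lifts to $P \to M^n$ by projectivity, and both vanish by the two halves of the $\tau$-rigidity condition.

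The hard part will be establishing functorial finiteness of ${}^\perp \tau M \cap P^\perp$. Here I would invoke a \emph{Bongartz completion} of the $\tau$-rigid pair $(M, P)$, producing a module $N$ such that $(M \oplus N, P)$ is a $\tau$-tilting pair with $\Fac(M \oplus N) = {}^\perp \tau M \cap P^\perp$. The forward inclusion is the previous paragraph applied to $(M \oplus N, P)$, together with $\tau$-rigidity of $M \oplus N$. The reverse inclusion is the substantive step and is essentially the content of the Bongartz-type construction in Adachi-Iyama-Reiten: one builds $N$ as a suitable direct summand associated to a minimal left $\add(M \oplus P)$-approximation, and then verifies that every object of ${}^\perp \tau M \cap P^\perp$ is a factor of a power of $M \oplus N$. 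Once $N$ exists, functorial finiteness of ${}^\perp \tau M \cap P^\perp$ reduces to that of $\Fac(M \oplus N)$, already established in the first paragraph.
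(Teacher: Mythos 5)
The paper offers no argument of its own here: the statement is quoted verbatim as \cite[Cor.~2.13]{AIR2014}, so the only ``proof'' in the paper is the citation. Your sketch reconstructs essentially the proof from that source: the Auslander--Smal\o{} closure argument (via the AR-duality formula $D\Ext^1_A(M,X)\cong\overline{\Hom}_A(X,\tau M)$) to show $\Fac M$ is a torsion class, elementary closure properties for ${}^\perp\tau M$ and $P^\perp$, the surjection-and-lifting trick for the inclusion $\Fac M\subseteq{}^\perp\tau M\cap P^\perp$, and the Bongartz completion $(M\oplus N,P)$ with $\Fac(M\oplus N)={}^\perp\tau M\cap P^\perp$ for functorial finiteness --- this last step is precisely the content of Theorem~2.10--2.12 and Corollary~2.13 of \cite{AIR2014}, so it is legitimate to treat it as the imported black box. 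The one place you are too quick is the claim that functorial finiteness of $\Fac M$ ``follows from the fact that $M$ is a generator'': $M$ is not a generator of $\mods A$, and even granting that $M$ generates $\Fac M$, covariant finiteness of $\Fac M$ is not automatic --- it is the theorem of Auslander--Smal\o{} and Smal\o{} (cf.\ \cite[Thm.~5.10]{AuslanderSmalo1980} and \cite[Prop.~1.1--1.2]{AIR2014}) that a torsion class of the form $\Fac M$ is functorially finite, whose proof requires an actual construction of left $\Fac M$-approximations. Cite that result (or supply the approximation) and your outline is complete and faithful to the original argument.
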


More generally, given a full subcategory $\Scal \subseteq \mods A$, we denote by  $\Trm(\Wcal) \coloneqq \Filt (\Fac \Scal)$ the smallest torsion class containing $\Scal$, see \cite[Lem. 3.1]{MarksStovicek}.

\begin{defn}
    An interval $[\Ucal, \Tcal]\subseteq \tors A$ is called \textit{$\tau$-perpendicular} if $\Ucal = \Fac M$ and $\Tcal = {}^\perp \tau M \cap P^\perp$ for some $\tau$-rigid pair $(M,P)$. A \textit{$\tau$-perpendicular wide subcategory} $W$ of $\mods A$ is a wide subcategory $W= M^\perp \cap {}^\perp \tau M \cap P^\perp$ for some $\tau$-rigid module $(M,P)$.
\end{defn}

Such $\tau$-perpendicular intervals of $\tors A$ have also been called polytopes \cite{DIRRT2017}, wide intervals \cite{AsaiPfeifer2019} or binuclear intervals \cite{Hanson2024} since they can be described combinatorially. More importantly, they arise naturally in the process of $\tau$-tilting reduction \cite[Thm. 3.12]{Jasso2015}, see also \cite[Thm. 4.12]{DIRRT2017}.

\begin{thm}
    Let $(M,P)$ be a $\tau$-rigid pair then $[\Fac M, {}^\perp \tau M \cap P^\perp] \subseteq \tors A$ is isomorphic to $\tors \Wcal_{(M,P)}$ where $\Wcal_{(M,P)} \coloneqq M^\perp \cap {}^\perp \tau M \cap P^\perp$ and there is an equivalence of categories $\Wcal_{(M,P)} \cong \mods C$ with some finite-dimensional algebra $C_{(M,P)}$ the $\tau$-tilting reduction of $A$ with respect to $(M,P)$.
\end{thm}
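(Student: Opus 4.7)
The plan is to recognize this theorem as a packaging of two well-known results, namely Jasso's $\tau$-tilting reduction \cite{Jasso2015} and its lattice-theoretic reformulation in \cite{DIRRT2017}, and to sketch how the pieces fit together. The statement naturally splits into (a) construction of the algebra $C_{(M,P)}$ together with the equivalence $\Wcal_{(M,P)} \cong \mods C_{(M,P)}$, and (b) the lattice isomorphism $[\Fac M, {}^\perp \tau M \cap P^\perp] \cong \tors \Wcal_{(M,P)}$.

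For (a), I would first pass from the $\tau$-rigid pair $(M,P)$ to a $\tau$-tilting pair via Bongartz completion: by \cite[Thm. 2.10]{AIR2014} there is an essentially unique $\tau$-rigid module $N$ such that $(M \oplus N, P)$ is $\tau$-tilting with $\Fac(M \oplus N) = {}^\perp \tau M \cap P^\perp$. Following Jasso, I would then set $C_{(M,P)} \coloneqq \End_A(N)/[M]$ (the endomorphism algebra of $N$ modulo morphisms factoring through $\add M$), and invoke \cite[Thm. 3.8, Thm. 3.12]{Jasso2015} which establishes that the natural functor $\Hom_A(N,-): \Wcal_{(M,P)} \to \mods C_{(M,P)}$ is an equivalence of abelian categories. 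This already gives the second assertion and identifies the $\tau$-tilting reduction.

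For (b), any equivalence of abelian categories induces an isomorphism on their lattices of torsion classes, so it suffices to exhibit a lattice isomorphism $[\Fac M, {}^\perp \tau M \cap P^\perp] \cong \tors \Wcal_{(M,P)}$. I would take the mutually inverse maps $\Tcal \mapsto \Tcal \cap \Wcal_{(M,P)}$ and $\Scal \mapsto \Trm(\Scal \cup \Fac M)$, verify that both land in the expected targets, and check monotonicity; the fact that these are inverse bijections of posets (hence of lattices) is precisely \cite[Thm. 4.12]{DIRRT2017}. Composing this isomorphism with the one induced by Jasso's equivalence yields $[\Fac M, {}^\perp \tau M \cap P^\perp] \cong \tors C_{(M,P)}$ as claimed.

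There is no serious obstacle here, since both building blocks are available in the literature; the only mild subtlety is to be explicit that Jasso's equivalence respects the abelian structure so that it induces a bijection on torsion classes, and that the interval endpoints correspond under this equivalence to $0$ and $\mods C_{(M,P)}$, which forces the bijection of \cite{DIRRT2017} to match Jasso's on the nose. Once this compatibility is recorded, the theorem follows by combining the two statements.
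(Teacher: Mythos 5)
Your proposal is correct and takes essentially the same route as the paper: the paper states this result as known background, citing precisely the two ingredients you combine, namely Jasso's $\tau$-tilting reduction \cite[Thm. 3.12]{Jasso2015} (via Bongartz completion and the equivalence $\Wcal_{(M,P)} \simeq \mods C_{(M,P)}$) and the lattice isomorphism of \cite[Thm. 4.12]{DIRRT2017}. The only point worth recording is that Jasso's statements are for $\tau$-rigid modules, and the extension to $\tau$-rigid pairs $(M,P)$ is exactly what the cited formulation in \cite{DIRRT2017} supplies, as your sketch implicitly uses.
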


Since $\tau$-perpendicular categories $\Wcal$ are equivalent to module categories, they have their own (relative) Auslander-Reiten translation $\tau_{\Wcal}$ and (relative) $\tau$-rigid pairs. We are now able to give the algebraic definition of the $\tau$-cluster morphism category in the general setting of finite-dimensional algebras. For definitions in hereditary and $\tau$-tilting finite settings see \cite{IgusaTodorov2017} and \cite{BuanMarsh2018}, respectively. For definitions in more general settings see \cite{Borve2022} and \cite{Kaipelcatpartfan}.

\begin{defn}\cite[Def. 6.1]{BuanHanson2017}
    The $\tau$-cluster morphism category $\Wfrak(A)$ has as its objects the $\tau$-perpendicular wide subcategories of $\mods A$. Given two $\tau$-perpendicular wide subcategories $\Wcal_1, \Wcal_2$ of $\mods A$, define
    \[ \Hom_{\Wfrak(A)} ( \Wcal_1, \Wcal_2) = \{g_{(M,P)}^{\Wcal_1} : (M,P) \text{ is a basic $\tau_{\Wcal_1}$-rigid pair and $\Wcal_2 = M^{\perp_{\Wcal_1}} \cap {}^{\perp_{\Wcal_1}} \tau_{\Wcal_1} M \cap P^{\perp_{\Wcal_1}}$}\}.\]
\end{defn}

We choose to omit the details on composition of morphisms for this category, whose associativity is highly non-trivial to prove, see \cite[Thm. 5.9]{BuanMarsh2018} and \cite[Thm. 6.12]{BuanHanson2017}. The generalisation of $\Wfrak(A)$ to non-positive dg algebras with finite-dimensional cohomology in all degrees of \cite{Borve2022} specialises to the above for finite-dimensional algebras and replaces $\tau$-perpendicular categories by certain thick subcategories of the bounded derived category $\Dcal^b(\mods A)$. The $\tau$-tilting reduction then becomes silting reduction \cite{IyamaYang2018}, whose functoriality implies the associativity more directly \cite[Thm. 4.3]{Borve2022}.

\subsection{Geometric definition}
Following \cite{DIJ2019}, we associate a polyhedral fan $\Sigma(A)$ to every algebra $A$ whose cones encode the $\tau$-tilting theory \cite{Asai2019WS,BST2019}. Let $M \in \mods A$ an consider a minimal projective presentation $\bigoplus_{i=1}^{|A|} P(i)^{b_i} \to \bigoplus_{i=1}^{|A|} P(i)^{a_i} \to M \to 0$. The \textit{$g$-vector} $g^M$ is defined as
\[ g^M \coloneqq (a_1 - b_1, \dots, a_{|A|} - b_{|A|})^T \in \R^{|A|}.\]

\begin{defn}
    The \textit{$g$-vector fan} $\Sigma(A)$ is given as the union of positive linear combinations of $g$-vectors, called \text{$g$-vector cones} of $\tau$-rigid pairs as follows:
    \[ \Sigma(A) \coloneqq \{ \Ccal_{(M,P)}: (M,P) \text{ is a $\tau$-rigid pair}\},\]
    where $\Ccal_{(M,P)} \coloneqq \spann_{\geq 0} \{ g^{M^1}, \dots, g^{M^r}, - g^{P_{r+1}}, \dots, g^{P_s}\} \subseteq \R^{|A|}$ and $M^i$ and $P^i$ are indecomposable direct summands of $M$ and $P$ such that $(M,P) = (\bigoplus_{i=1}^r M^i, \bigoplus_{i={r+1}}^s P^i)$.
\end{defn}

\begin{rmk}
    When $\Sigma(A)$ is finite, the poset $\tors A$ induces a fan poset, see \cite{Reading2003}, on the maximal cones of $\Sigma(A)$ by \cite[Prop. 6.15]{Kaipelcatpartfan}.
\end{rmk}

For a $g$-vector cone $\Ccal_{(M,P)} \in \Sigma(A)$ denote by $\pi_{\Ccal_{(M,P)}}: \R^n \to \spann \{ \Ccal_{(M,P)}\}^\perp$ the orthogonal projection onto the subspace orthogonal to the linear span of $\Ccal_{(M,P)}$. Intuitively, this map corresponds to $\tau$-tilting reduction present in the previous definition, see \cite[Sec. 4]{Asai2019WS}. The authors of \cite{STTW2023} use the information encoded in the $g$-vector fan to define the following category.

\begin{defn}\cite[Def. 3.3]{STTW2023}\label{defn:STTWdefn}
    The \textit{(geometric) $\tau$-cluster morphism category} $\Cfrak(A)$ has as its objects equivalence classes of $g$-vector cones $[\Ccal_{(M,P)}]$ under the equivalence relations $\Ccal_{(M_1,P_1)} \sim \Ccal_{(M_2,P_2)}$ whenever $\Wcal_{(M_1,P_1)} = \Wcal_{(M_2,P_2)}$. The morphisms of $\Cfrak(A)$ are given by equivalence classes of morphisms in the poset category of the fan $\Sigma(A)$ under inclusion of $g$-vector cones. More precisely,
    \[ \Hom_{\Cfrak(A)}([\Ccal_{(M,P)}],[\Ccal_{(N,Q)}]) = \bigcup_{\substack{\Ccal_{(M',P')} \in [ \Ccal_{(M,P)}]\\ \Ccal_{(N',Q')} \in [\Ccal_{(N,Q)}]}} \Hom_{\Sigma(A)}(\Ccal_{(M',P')}, \Ccal_{(N',Q')})\]
    under the equivalence relation $f_{\Ccal_{(M_1,P_1)}\Ccal_{(N_1,Q_1)}} \sim f_{\Ccal_{(M_2,P_2)}\Ccal_{(N_2,P_2)}}$ whenever 
    \[\pi_{\Ccal_{(M_1,P_1)}}(\Ccal_{(N_1,Q_1)}) = \pi_{\Ccal_{(M_2,P_2)}}(\Ccal_{(N_2,Q_2)}). \]
\end{defn}

This approach makes the associativity of the composition of morphisms trivial, but some technicalities remain to show composition is well-defined, see \cite[Lem. 3.9, 3.10]{STTW2023}.

\begin{thm} \cite[Thm. 3.11, Cor 4.6]{STTW2023}
    $\Cfrak(A)$ is a well-defined category and $\Cfrak(A) \cong \Wfrak(A)$ is an equivalence of categories.
\end{thm}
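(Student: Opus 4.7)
The plan is to split the result into two parts: first verifying that $\Cfrak(A)$ is a well-defined category, and then constructing an equivalence $F : \Cfrak(A) \to \Wfrak(A)$.

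For well-definedness of $\Cfrak(A)$, the identity on $[\Ccal_{(M,P)}]$ is represented by the identity inclusion $\Ccal_{(M,P)} \subseteq \Ccal_{(M,P)}$, and composition is transitivity of inclusions, so associativity is automatic. The delicate point is to check that the equivalence $\sim$ on hom-sets, given by $\pi_{\Ccal_{(M,P)}}(\Ccal_{(N,Q)}) = \pi_{\Ccal_{(M',P')}}(\Ccal_{(N',Q')})$, is a congruence with respect to this composition. For this one exploits that the projection $\pi_{\Ccal_{(M,P)}}$ geometrically realises $\tau$-tilting reduction \cite[Sec. 4]{Asai2019WS}: the collection of cones of $\Sigma(A)$ containing $\Ccal_{(M,P)}$ as a face projects to the $g$-vector fan of the reduced algebra $C_{(M,P)}$. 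Iterated reduction is then governed by the compatibility
\[ \pi_{\Ccal_{(R,S)}} \;=\; \pi_{\pi_{\Ccal_{(M,P)}}(\Ccal_{(R,S)})} \circ \pi_{\Ccal_{(M,P)}} \]
whenever $\Ccal_{(M,P)} \subseteq \Ccal_{(R,S)}$, which forces well-definedness of composition on $\sim$-classes.

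For the equivalence $F : \Cfrak(A) \to \Wfrak(A)$, set on objects $F([\Ccal_{(M,P)}]) := \Wcal_{(M,P)}$; this is a well-defined bijection by the very definition of $\sim$ on objects. On morphisms, given an inclusion $f : \Ccal_{(M,P)} \subseteq \Ccal_{(N,Q)}$, the cone $\pi_{\Ccal_{(M,P)}}(\Ccal_{(N,Q)})$ lies in the $g$-vector fan of the reduced algebra $C_{(M,P)}$ and hence corresponds to a unique basic relative $\tau_{\Wcal_{(M,P)}}$-rigid pair $(N', Q')$ in $\Wcal_{(M,P)}$; declare $F(f) := g^{\Wcal_{(M,P)}}_{(N', Q')}$. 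Well-definedness on $\sim$-classes is immediate since the defining relation on $\Cfrak(A)$-morphisms is equality of projections, hence equality of associated relative $\tau$-rigid pairs. Faithfulness is the converse of this well-definedness. Essential surjectivity holds because every $\tau$-perpendicular wide subcategory has the form $\Wcal_{(M,P)}$, and fullness follows because each $g^{\Wcal_{(M,P)}}_{(N',Q')}$ in $\Wfrak(A)$ is supplied by some relative $\tau$-rigid pair, which lifts to a cone $\Ccal_{(N,Q)} \supseteq \Ccal_{(M,P)}$ whose projection recovers $(N',Q')$.

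The hardest step is verifying functoriality of $F$: the composite of $\Wfrak(A)$-morphisms, whose associativity is itself nontrivial \cite[Thm. 5.9]{BuanMarsh2018}, must match transitive cone inclusion in $\Cfrak(A)$. This reduces precisely to the compatibility of iterated orthogonal projection with iterated $\tau$-tilting reduction displayed above, and is the main payoff of the geometric approach: one trades a difficult algebraic associativity argument for a transparent geometric verification that the fan of $C_{(M,P)}$ arises by projecting the star of $\Ccal_{(M,P)}$ in $\Sigma(A)$.
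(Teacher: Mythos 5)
This theorem is not proved in the paper at all: it is imported verbatim from \cite[Thm. 3.11, Cor. 4.6]{STTW2023}, so the only available comparison is with the cited argument and with the paper's parallel construction for $\Tfrak(A)$ in \cref{sec:definition}. Your outline follows the same geometric strategy as that source (objects identified along equal $\tau$-perpendicular wide subcategories, morphisms compared through the orthogonal projections $\pi_{\Ccal_{(M,P)}}$, associativity for free, the compatibility of projection with $\tau$-tilting reduction from \cite[Sec. 4]{Asai2019WS} doing the real work), and the functor you sketch is the right one.

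The genuine gap is in well-definedness of composition. Hom-sets in $\Cfrak(A)$ are unions over representatives of the object classes, so a composable pair typically has the form $[f_{\Ccal_1 \subseteq \Ccal_2'}]$ and $[f_{\Ccal_2'' \subseteq \Ccal_3}]$ with $\Ccal_2' \neq \Ccal_2''$ and only $\Wcal_{(M_2',P_2')} = \Wcal_{(M_2'',P_2'')}$; composition is therefore not ``transitivity of inclusions'' until you produce a representative $f_{\Ccal_2' \subseteq \Ccal_3'}$ of the second class based at $\Ccal_2'$ with the same projection. Your identity $\pi_{\Ccal_{(R,S)}} = \pi_{\pi_{\Ccal_{(M,P)}}(\Ccal_{(R,S)})} \circ \pi_{\Ccal_{(M,P)}}$ only yields the congruence statement (the analogue of \cite[Lem. 3.10]{STTW2023}); the existence of a composable representative is the separate \cite[Lem. 3.9]{STTW2023}, whose counterpart in this paper is \cref{lem:composition1}, proved there via the reduction bijections $E_{(M,P)}$ and brick-label-preserving interval isomorphisms --- i.e.\ exactly the lifting you invoke only later, for fullness of $F$. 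Relatedly, your claim that equality of projections ``immediately'' gives equality of the associated relative $\tau$-rigid pairs is not automatic: the two projections live in the orthogonal complements of different cones, and translating a projected cone into a relative $\tau$-rigid pair over the reduced algebra, consistently for all representatives of the same wide subcategory, is precisely \cite[Lem. 3.8]{STTW2023} together with \cite[Lem. 4.4]{Asai2019WS} (compare \cref{lem:morphtranslation}). With these two points supplied, your sketch matches the cited proof.
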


The work of \cite{STTW2023} inspired the author to generalise the construction above to simplicial polyhedral fans in \cite{Kaipelcatpartfan}, where it was shown that different choices of identifications of cones also give well-defined categories. More importantly, this generalisation provided new insight into properties of $\Cfrak(A)$.

\section{Definition from the lattice of torsion classes}\label{sec:definition}
We now take an alternative approach to \cite{STTW2023} and define the $\tau$-cluster morphism category from the lattice of torsion classes. Let $\itv(\tors A)$ be the poset of intervals of $\tors A$ ordered (conventionally) by reverse containment. If $[\Ucal, \Tcal], [\Scal, \Vcal] \in \itv( \tors A)$ are two intervals, we say $[\Ucal, \Tcal] \leq [\Scal, \Vcal]$ whenever $\Ucal \subseteq \Scal$ and $\Vcal \subseteq \Tcal$. The relationship of this construction with the geometric one comes from the assignment 
\begin{equation}\label{eq:gfantoitv}
\begin{aligned}
    \Sigma(A) &\to \itv(\tors A) \\
    \Ccal_{(M,P)} &\mapsto [\Fac M, {}^\perp \tau M \cap P^\perp].
\end{aligned}
\end{equation}

In particular, considering only maximal cones $\Ccal_{(M,P)} \in \Sigma(A)^n$ in \cref{eq:gfantoitv}, where $(M,P)$ is $\tau$-tilting and ${}^\perp \tau M \cap P^\perp = \Fac M$ by \cite[Prop. 2.16b]{AIR2014}, we observe the bijection between $\tau$-tiltling modules $(M,P)$ and functorially-finite torsion classes $\Fac M \in \ftors A$ of \cite[Thm. 2.7]{AIR2014}.  \\ 

Recall, that the \textit{Hasse quiver} $\Hasse(P)$ of a poset $P$ is the quiver whose vertices are elements $x \in P$ and whose arrows $y \to x$ correspond with cover relations $x \lessdot y$ in $P$. Recall that we write $x \lessdot y$ if there does not exist $z \in P$ such that $x \lneq z \lneq y$ holds. Furthermore, for a subcategory $\Scal \subseteq \mods A$ we write $\Filt_A(\Scal)$ for the full subcategory of $\mods A$ filtered by modules in $\Scal$ and for a module $M \in \mods A$ we write $\Filt_A(M) \coloneqq \Filt_A(\add\{M\})$. \\

An important part of $\tors A$ is its \textit{brick-labelling} \cite{AsaiSB2020, DIRRT2017}. A \textit{brick} $B \in \mods A$ is module whose endomorphism algebra $\End_A(B)$ is a division algebra. Let $\Ucal \lessdot \Tcal$ be a covering relation in $\tors A$, then we can label the arrow $\Tcal \xrightarrow{B} \Ucal$ in $\Hasse(\tors A)$ with the unique brick $B$ such that $\Ucal^\perp \cap \Tcal = \Filt_A (B)$, by \cite[Thm. 3.3]{DIRRT2017} or \cite[Lem. 1.16]{AsaiSB2020}. For an interval $[\Ucal, \Tcal] \subseteq \tors A$ we write $\brick[\Ucal, \Tcal]$ for the set of bricks arising as brick labels of the interval, or equivalent, the bricks in $\Ucal^\perp \cap \Tcal$. \\

The analogue of the map $\pi_{\Ccal_{(M,P)}}$ in the geometric definition, which allows us to identify morphisms corresponding to ``the same'' $\tau$-tilting reduction, is the following result. For a $\tau$-rigid pair $(M,P)$ denote by $[\Ucal_{(M,P)}, \Tcal_{(M,P)}]$ the interval $[\Fac M, {}^\perp \tau M \cap P^\perp]$.

\begin{thm}\cite[Thm. 4.12]{DIRRT2017}\label{thm:wideintiso}
    Let $[\Ucal_{(M_1,P_1)}, \Tcal_{(M_1,P_1)}]$ and $[\Ucal_{(M_2,P_2)}, \Tcal_{(M_2,P_2)}]$ be two distinct $\tau$-perpendicular intervals of $\tors A$ such that $\Wcal_{(M_1,P_1)} = \Wcal_{(M_2,P_2)}$. Then there are three lattice isomorphisms
    \[
    \begin{tikzcd}
        {[}\Ucal_{(M_1,P_1)}, \Tcal_{(M_1,P_1)}{]} \arrow[rr, "\cong"] \arrow[rd, "{- \cap \Wcal_{(M_1,P_1)}}", swap] && {[}\Ucal_{(M_2,P_2)}, \Tcal_{(M_2,P_2)}{]} \arrow[ld, "{- \cap \Wcal_{(M_2,P_2)}}"]\\
        & \tors \Wcal_{(M_1,P_1)} = \tors \Wcal_{(M_2,P_2)} 
    \end{tikzcd}
    \]
\end{thm}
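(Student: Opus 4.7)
The plan is to reduce the claim to Jasso's $\tau$-tilting reduction theorem (quoted in the excerpt just above the statement), which establishes each of the two slanted maps as a lattice isomorphism. Once both slanted maps are known to be isomorphisms, the horizontal map is forced and the triangle commutes tautologically; the hypothesis $\Wcal_{(M_1,P_1)}=\Wcal_{(M_2,P_2)}$ is used precisely to identify the codomains of the two slanted maps.

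First, fix one $\tau$-rigid pair $(M,P)$ and consider the map $-\cap \Wcal_{(M,P)}:[\Fac M,{}^{\perp}\tau M\cap P^{\perp}]\to \tors\Wcal_{(M,P)}$. I would verify four things in turn. Well-definedness: if $\Tcal$ lies in the interval, intersecting with the wide subcategory $\Wcal_{(M,P)}=M^{\perp}\cap{}^{\perp}\tau M\cap P^{\perp}$ yields a subcategory of $\Wcal_{(M,P)}$ closed under factors (inherited from $\Tcal$) and extensions (inherited from both $\Tcal$ and $\Wcal_{(M,P)}$), hence a torsion class of the module category $\Wcal_{(M,P)}\cong \mods C_{(M,P)}$. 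Order preservation is immediate. Surjectivity: given a torsion class $\Scal\subseteq \Wcal_{(M,P)}$ of the reduction, the category $\Trm(\Fac M\cup \Scal)$ lies in the interval and intersects $\Wcal_{(M,P)}$ back in $\Scal$, using that $\Fac M\subseteq {}^{\perp}\Wcal_{(M,P)}$-style orthogonality, that $\Fac M$ is the minimum of the interval, and that $\Scal\subseteq\Wcal_{(M,P)}\subseteq P^{\perp}\cap{}^{\perp}\tau M$. Injectivity: any $\Tcal$ in the interval is recovered from its intersection with $\Wcal_{(M,P)}$ as $\Tcal=\Trm(\Fac M\cup(\Tcal\cap\Wcal_{(M,P)}))$, since every module of $\Tcal$ can be built by extensions from $\Fac M$ and the bricks of $\brick[\Fac M,\Tcal]$, and the latter all lie in $\Wcal_{(M,P)}$ by the brick-labelling of the interval.

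Second, apply the preceding step to $(M_1,P_1)$ and to $(M_2,P_2)$ separately. Because $\Wcal_{(M_1,P_1)}=\Wcal_{(M_2,P_2)}$, the two resulting lattice isomorphisms land in literally the same lattice $\tors\Wcal_{(M_1,P_1)}=\tors\Wcal_{(M_2,P_2)}$. Hence the composition
\[
[\Ucal_{(M_1,P_1)},\Tcal_{(M_1,P_1)}]\xrightarrow{-\cap\Wcal_{(M_1,P_1)}}\tors\Wcal_{(M_1,P_1)}\xrightarrow{(-\cap\Wcal_{(M_2,P_2)})^{-1}}[\Ucal_{(M_2,P_2)},\Tcal_{(M_2,P_2)}]
\]
is a lattice isomorphism, and by construction the diagram in the statement commutes.

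The main obstacle is the injectivity/surjectivity of the slanted map, which is essentially the content of Jasso's theorem: one needs the equivalence $\Wcal_{(M,P)}\cong\mods C_{(M,P)}$ in order to talk about $\tors\Wcal_{(M,P)}$ at all, and one needs a clean way to lift torsion classes from the reduction back to the interval. Both ingredients are precisely what \cite{Jasso2015} and \cite[Thm.~4.12]{DIRRT2017} provide, so in a paper the whole argument collapses to quoting that theorem pointwise for $(M_1,P_1)$ and $(M_2,P_2)$ and then observing that the two codomains coincide.
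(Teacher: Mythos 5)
Your proposal is correct and matches the paper's treatment: the statement is quoted directly from \cite[Thm.~4.12]{DIRRT2017} (with the reduction equivalence $\Wcal_{(M,P)}\cong\mods C_{(M,P)}$ coming from \cite{Jasso2015}), so the argument amounts to invoking that result for each of the two $\tau$-rigid pairs and observing that the shared wide subcategory identifies the two codomains, exactly as you describe. Your additional sketch of why $-\cap\Wcal_{(M,P)}$ is a lattice isomorphism is consistent with the cited proofs and introduces no gap.
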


The isomorphism in between the $\tau$-perpendicular intervals factors through the lattice of torsion classes of their shared $\tau$-perpendicular wide subcategory. Moreover, the following is an important observation.

\begin{prop}\cite[Prop. 4.13]{DIRRT2017}\label{prop:brickpreserve}
    The three lattice isomorphisms in \cref{thm:wideintiso} preserve the brick-labelling of Hasse quivers.
\end{prop}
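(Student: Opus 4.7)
The plan is to reduce brick-preservation for the direct horizontal isomorphism to brick-preservation for the two vertical maps $\rho_i \colon [\Ucal_{(M_i,P_i)}, \Tcal_{(M_i,P_i)}] \to \tors \Wcal_{(M_i,P_i)}$, $\Xcal \mapsto \Xcal \cap \Wcal_{(M_i,P_i)}$, since the horizontal isomorphism factors as $\rho_2^{-1} \circ \rho_1$. It therefore suffices to treat one such vertical map $\rho = - \cap \Wcal$ with $\Wcal = \Wcal_{(M,P)}$.

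The first key observation is that the possible brick labels of covers inside the interval all lie in $\Wcal$. Indeed, using $(\Fac M)^\perp = M^\perp$, we have
\[ \Ucal_{(M,P)}^\perp \cap \Tcal_{(M,P)} = M^\perp \cap {}^\perp \tau M \cap P^\perp = \Wcal. \]
Thus for any cover $\Xcal_1 \lessdot \Xcal_2$ in $[\Ucal_{(M,P)}, \Tcal_{(M,P)}]$, its brick label $B \in \Xcal_1^\perp \cap \Xcal_2$ satisfies $B \in \Wcal$, and since $\Wcal$ is a full subcategory, $B$ remains a brick when viewed as an object of $\Wcal \cong \mods C_{(M,P)}$.

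Next, because $\rho$ is a lattice isomorphism (by \cref{thm:wideintiso}) it sends the cover $\Xcal_1 \lessdot \Xcal_2$ to a cover $\Xcal_1 \cap \Wcal \lessdot \Xcal_2 \cap \Wcal$ in $\tors \Wcal$. Applying the brick-labelling theorem inside $\mods C_{(M,P)}$, this latter cover admits a unique brick label $B'$, characterised as the unique brick of $\Wcal$ inside $(\Xcal_1 \cap \Wcal)^{\perp_\Wcal} \cap (\Xcal_2 \cap \Wcal)$. To conclude $B = B'$, I would verify directly that $B$ lies in this category: clearly $B \in \Xcal_2 \cap \Wcal$, and for any $X \in \Xcal_1 \cap \Wcal$ we have $\Hom_\Wcal(X,B) = \Hom_A(X,B) = 0$ because $B \in \Xcal_1^\perp$. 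Uniqueness of the brick label in $\tors \Wcal$ forces $B' = B$.

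The step I expect to require the most care is the passage between the two perpendicular operations $(-)^\perp$ (in $\mods A$) and $(-)^{\perp_\Wcal}$ (in $\Wcal$), namely ensuring that $B$ continues to witness the brick-labelling after restriction. This is handled by the observation that $\Wcal$ is a full and exact subcategory, so $\Hom$ spaces agree and there is no discrepancy between ``being a brick in $\mods A$'' and ``being a brick in $\Wcal$'' for objects of $\Wcal$; the remaining uniqueness argument is then immediate from the brick-labelling theorem applied inside $\mods C_{(M,P)}$. With both $\rho_1$ and $\rho_2$ label-preserving, the composition $\rho_2^{-1} \circ \rho_1$ is as well, completing the proof.
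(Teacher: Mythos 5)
Your argument is correct: reducing to the vertical maps $-\cap\Wcal_{(M_i,P_i)}$, noting that every label of a cover inside the interval already lies in $\Ucal^{\perp}\cap\Tcal=\Wcal$, and then identifying it with the label in $\tors\Wcal$ via the uniqueness of the brick in the intersection for a cover relation (together with fullness of $\Wcal$, so brick-ness and $\Hom$-vanishing are unaffected) is exactly the standard argument. The paper itself gives no proof, citing \cite[Prop.~4.13]{DIRRT2017}, and your reconstruction is essentially the argument used there, so there is nothing to correct.
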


We now present the definition of the $\tau$-cluster morphism category from the lattice of torsion classes.

\begin{defn}\label{defn:latticedef}
    The \textit{(lattice-theoretic) $\tau$-cluster morphism category} $\Tfrak(A)$ has as its objects equivalence classes $[\Ucal_{(M,P)}, \Tcal_{(M,P)}]_\sim$ of $\tau$-perpendicular intervals of $\tors A$ under the equivalence relation 
    \[[\Ucal_{(M_1,P_1)}, \Tcal_{(M_1,P_1)}] \sim [\Ucal_{(M_2,P_2)}, \Tcal_{(M_2,P_2)}] \]
    whenever $\Wcal_{(M_1,P_1)} = \Wcal_{(M_2,P_2)}$. The morphisms of $\Tfrak(A)$ are given by equivalence classes of morphisms in the poset category $\itv (\tors A)$. More precisely,
    \begin{align*}
            &\Hom_{\Tfrak(A)}([\Ucal_{(M,P)}, \Tcal_{(M,P)}]_\sim, [\Ucal_{(N,Q)}, \Tcal_{(N,Q)}]_\sim) \\
            &=\bigcup_{\substack{[\Ucal_{(M',P')}, \Tcal_{(M',P')}] \in [\Ucal_{(M,P)}, \Tcal_{(M,P)}]_\sim \\ [\Ucal_{(N',Q')}, \Tcal_{(N',Q')}] \in [\Ucal_{(N,Q)}, \Tcal_{(N,Q)}]_\sim}} \Hom_{\itv(\tors A)} ( [\Ucal_{(M',P')}, \Tcal_{(M',P')}], [\Ucal_{(N',Q')}, \Tcal_{(N',Q')}])
        \end{align*}
    under the equivalence relation
    \[f_{[\Ucal_{(M_1,P_1)}, \Tcal_{(M_1,P_1)}][\Ucal_{(N_1,Q_1)}, \Tcal_{(N_1,Q_1)}]} \sim f_{[\Ucal_{(M_2,P_2)}, \Tcal_{(M_2,P_2)}][\Ucal_{(N_2,Q_2)}, \Tcal_{(N_2,Q_2)}]}\]
    whenever $[\Ucal_{(N_1, Q_1)}, \Tcal_{(N_1,Q_1)}] \cap \Wcal_{(M_1, P_1)} = [\Ucal_{(N_2, Q_2)}, \Tcal_{(N_2,Q_2)}] \cap \Wcal_{(M_2, P_2)}$.
\end{defn}

It is not difficult to see that there are bijections between the objects of $\Cfrak(A)$ and those of $\Tfrak(A)$ via the correspondence \cref{eq:gfantoitv}.  When the reference to the specific $\tau$-rigid pair is not important in later sections, we denote $\tau$-perpendicular intervals by $[\Ucal, \Tcal]$ and suppress writing $(M,P)$ which shortens the notation.\\

Composition of morphisms in $\Tfrak(A)$ is trivially associative. However, similar ambiguities to those in the geometric construction of \cite{STTW2023} arise when determining whether composition in $\Tfrak(A)$ is well-defined, because we are also defining $\Tfrak(A)$ via equivalence classes. We begin by connecting the objects and morphisms of $\Cfrak(A)$ and $\Tfrak(A)$ showing that identifications happen in the same way.

\begin{lem} \label{lem:objtranslation}
    The assignment of \cref{eq:gfantoitv} induces a well-defined bijection between the objects of $\Cfrak(A)$ and $\Tfrak(A)$.
\end{lem}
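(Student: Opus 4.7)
The plan is to observe that the assignment \cref{eq:gfantoitv} and the equivalence relations on both sides are parameterised by the same data, namely basic $\tau$-rigid pairs $(M,P)$, with both $\sim$-relations defined by the identical condition $\Wcal_{(M_1,P_1)} = \Wcal_{(M_2,P_2)}$. This parallel structure will make the bijection between equivalence classes almost tautological.

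First I would verify that $\Ccal_{(M,P)} \mapsto [\Fac M, {}^\perp \tau M \cap P^\perp]$ is a well-defined function $\Sigma(A) \to \itv(\tors A)$: each cone $\Ccal_{(M,P)}$ determines the underlying basic $\tau$-rigid pair $(M,P)$ uniquely up to isomorphism, being the positive span of the signed $g$-vectors of the indecomposable summands, and its image is by definition a $\tau$-perpendicular interval of $\tors A$.

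Next I would check that this function descends to the quotients. The equivalence $\Ccal_{(M_1,P_1)} \sim \Ccal_{(M_2,P_2)}$ in $\Cfrak(A)$ is $\Wcal_{(M_1,P_1)} = \Wcal_{(M_2,P_2)}$ (\cref{defn:STTWdefn}), which is exactly the relation imposed on $\tau$-perpendicular intervals in \cref{defn:latticedef}. Hence the induced map $[\Ccal_{(M,P)}] \mapsto [[\Ucal_{(M,P)}, \Tcal_{(M,P)}]]_\sim$ is well-defined, is surjective (since every $\tau$-perpendicular interval is by definition $[\Fac M, {}^\perp \tau M \cap P^\perp]$ for some $\tau$-rigid pair, hence in the image of the corresponding cone), and is injective (by the identical defining conditions of the two $\sim$-relations).

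The only real subtlety—and the point that makes this more than a vacuous identification—is that the assignment $\Ccal_{(M,P)} \mapsto [\Fac M, {}^\perp \tau M \cap P^\perp]$ need not itself be injective before quotienting: \emph{a priori} two non-isomorphic $\tau$-rigid pairs could produce the same interval. This ambiguity is precisely absorbed by the equivalence relation, as the wide subcategory can be read off the interval via $\Wcal_{(M,P)} = (\Fac M)^\perp \cap {}^\perp \tau M \cap P^\perp = \Ucal_{(M,P)}^\perp \cap \Tcal_{(M,P)}$, so any two pairs giving the same interval automatically satisfy $\Wcal_{(M_1,P_1)} = \Wcal_{(M_2,P_2)}$ and are $\sim$-equivalent on both sides. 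Thus after quotienting the two parametrisations coincide, giving the claimed bijection on objects.
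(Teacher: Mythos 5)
Your proposal is correct and follows essentially the same route as the paper: the key observation in both is that the equivalence relations on $g$-vector cones in $\Cfrak(A)$ and on $\tau$-perpendicular intervals in $\Tfrak(A)$ are defined by the identical condition $\Wcal_{(M_1,P_1)} = \Wcal_{(M_2,P_2)}$, so the assignment of \cref{eq:gfantoitv} descends to a bijection on objects. Your extra remarks (surjectivity via the parametrisation by $\tau$-rigid pairs, and reading off $\Wcal_{(M,P)} = \Ucal_{(M,P)}^\perp \cap \Tcal_{(M,P)}$ from the interval) are correct elaborations of what the paper leaves implicit.
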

\begin{proof}
    Using only the definitions of $\Cfrak(A)$ and $\Tfrak(A)$, we have the following sequence of implications
    \begin{align*}
        \Ccal_{(M_1,P_1)} \sim_{\Cfrak(A)} \Ccal_{(M_2,P_2)} \Leftrightarrow  \Wcal_{(M_1,P_1)} = \Wcal_{(M_2,P_2)} \Leftrightarrow [\Ucal_{(M_1,P_1)}, \Tcal_{(M_1,P_1)}] \sim_{\Tfrak(A)} [\Ucal_{(M_2,P_2)}, \Tcal_{(M_2,P_2)}].
    \end{align*}
    Therefore the equivalence relation is preserved on objects.
\end{proof}

\begin{lem}\label{lem:morphtranslation}
    The assignment of \cref{eq:gfantoitv} induces a well-defined map between morphisms of $\Cfrak(A)$ and $\Tfrak(A)$. That is, the identification of morphisms is preserved, or in other words, for two inclusions of $g$-vector cones $\Ccal_{(M_i,P_i)} \subseteq \Ccal_{(N_i, Q_i)} \in \Sigma(A)$ for $i=1,2$ with corresponding $\tau$-perpendicular intervals $[\Ucal_{(M_i,P_i)}, \Tcal_{(M_i,P_i)}] \supseteq [\Ucal_{(N_i,Q_i)}, \Tcal_{(N_i,Q_i)}]$ we have  
    \begin{align*}
         &\pi_{\Ccal_{(M_1,P_1)}}(\Ccal_{(N_1,Q_1)}) = \pi_{\Ccal_{(M_2,P_2)}}(\Ccal_{(N_2,Q_2)}) \\
         &\Leftrightarrow ([\Ucal_{(N_1,Q_1)}, \Tcal_{(N_1,Q_1)}]) \cap \Wcal_{(M_1,P_1)} = [\Ucal_{(N_2,Q_2)}, \Tcal_{(N_2,Q_2)}] \cap \Wcal_{(M_2,P_2)}.
    \end{align*}
\end{lem}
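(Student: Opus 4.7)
The plan is to leverage the already-established correspondence \eqref{eq:gfantoitv} between $g$-vector cones and $\tau$-perpendicular intervals and show that under this correspondence, the geometric identification by projection and the lattice-theoretic identification by intersection are the same relation. The first step is to note that if $\Ccal_{(M,P)} \subseteq \Ccal_{(N,Q)}$ in $\Sigma(A)$, then the reverse containment $[\Ucal_{(M,P)}, \Tcal_{(M,P)}] \supseteq [\Ucal_{(N,Q)}, \Tcal_{(N,Q)}]$ holds, so morphisms correspond to morphisms under \eqref{eq:gfantoitv}.

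Next I would use the fact, which underlies both the geometric and algebraic $\tau$-tilting reductions, that the projection $\pi_{\Ccal_{(M,P)}}$ realises the $g$-vector fan $\Sigma(\Wcal_{(M,P)})$ of the $\tau$-tilting reduction: concretely, the cones of $\Sigma(A)$ containing $\Ccal_{(M,P)}$ are mapped bijectively onto $\Sigma(\Wcal_{(M,P)})$, and $\pi_{\Ccal_{(M,P)}}(\Ccal_{(N,Q)})$ is the $g$-vector cone in $\Sigma(\Wcal_{(M,P)})$ of the relative $\tau_{\Wcal_{(M,P)}}$-rigid pair induced by $(N,Q)$. Applying \eqref{eq:gfantoitv} to the algebra $C_{(M,P)}$ whose module category is $\Wcal_{(M,P)}$, this $g$-vector cone corresponds to a $\tau$-perpendicular interval of $\tors \Wcal_{(M,P)}$; by \cref{thm:wideintiso} this interval is exactly $[\Ucal_{(N,Q)}, \Tcal_{(N,Q)}] \cap \Wcal_{(M,P)}$, since the lattice isomorphism $-\cap \Wcal_{(M,P)}$ sends $\tau$-perpendicular intervals to $\tau$-perpendicular intervals.

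With that compatibility in hand, the biconditional follows directly. If $f_{\Ccal_{(M_1,P_1)}\Ccal_{(N_1,Q_1)}} \sim f_{\Ccal_{(M_2,P_2)}\Ccal_{(N_2,Q_2)}}$ in $\Cfrak(A)$ then the two sources are identified, so $\Wcal_{(M_1,P_1)} = \Wcal_{(M_2,P_2)}$; both projections therefore live in the same fan $\Sigma(\Wcal_{(M_1,P_1)})$, and transporting them across \eqref{eq:gfantoitv} for the reduction gives the two intersections $[\Ucal_{(N_i,Q_i)}, \Tcal_{(N_i,Q_i)}] \cap \Wcal_{(M_i,P_i)}$ as the \emph{same} interval in the common lattice $\tors \Wcal_{(M_1,P_1)}$. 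The reverse direction is identical, reading the commuting diagram from the lattice side back to the fan side and using that \eqref{eq:gfantoitv} is a bijection.

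The main obstacle is verifying that the square
\[
\begin{tikzcd}
\{\Ccal_{(N,Q)} \supseteq \Ccal_{(M,P)}\} \arrow[r, "\pi_{\Ccal_{(M,P)}}"] \arrow[d, "\eqref{eq:gfantoitv}"'] & \Sigma(\Wcal_{(M,P)}) \arrow[d, "\eqref{eq:gfantoitv}"]\\
\{[\Ucal_{(N,Q)},\Tcal_{(N,Q)}] \subseteq [\Ucal_{(M,P)},\Tcal_{(M,P)}]\} \arrow[r, "-\cap \Wcal_{(M,P)}"'] & \itv(\tors \Wcal_{(M,P)})
\end{tikzcd}
\]
actually commutes; this is really a statement about $\tau$-tilting reduction being simultaneously visible on both $g$-vectors and torsion classes. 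I would appeal to the results recalled before the lemma (in particular \cite[Thm.~3.12]{Jasso2015} / \cref{thm:wideintiso} and the technical lemmas \cite[Lem.~3.9, 3.10]{STTW2023}) and check commutativity on indecomposable summands, where the geometric side sends $g^{N^i}$ to the $g$-vector of $N^i$ viewed inside the reduction, while the lattice side sends $\Fac N^i$ to $\Fac N^i \cap \Wcal_{(M,P)}$; both describe the same relative $\tau$-rigid summand.
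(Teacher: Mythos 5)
Your overall route is the same one the paper takes: reduce the biconditional to the compatibility of the orthogonal projection $\pi_{\Ccal_{(M,P)}}$ with $\tau$-tilting reduction, i.e.\ to the statement that projecting a cone containing $\Ccal_{(M,P)}$ records exactly the same data as intersecting the corresponding torsion classes with $\Wcal_{(M,P)}$. The difference is in how that compatibility is secured, and this is where your argument has a gap. The commuting square you isolate as ``the main obstacle'' is precisely the content of the two external results the paper invokes: \cite[Lem. 3.8]{STTW2023}, which translates $\pi_{\Ccal_{(M,P)}}:\R^{|A|}\to\spann\{\Ccal_{(M,P)}\}^\perp$ into Asai's reduction map $K_0(\proj A)\to K_0(\proj \Wcal_{(M,P)})$, and \cite[Lem. 4.4(3)]{Asai2019WS}, which states directly that equality of projected cones is equivalent to equality of the torsion classes $\Tcal_{(N_i,Q_i)}\cap\Wcal_{(M_i,P_i)}$ and $\Ucal_{(N_i,Q_i)}\cap\Wcal_{(M_i,P_i)}$. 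The references you propose instead do not deliver this: \cite[Thm. 3.12]{Jasso2015} and \cref{thm:wideintiso} only give the torsion-class (lattice) side of the square, and \cite[Lem. 3.9, 3.10]{STTW2023} concern well-definedness of composition in $\Cfrak(A)$, not the identification of $\pi_{\Ccal_{(M,P)}}(\Ccal_{(N,Q)})$ with a $g$-vector cone of the reduced algebra.

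Your fallback plan --- ``check commutativity on indecomposable summands'' --- does not close this gap either, because the statement that $\pi_{\Ccal_{(M,P)}}(g^{N^i})$ is the $g$-vector of the reduced $\tau_{\Wcal_{(M,P)}}$-rigid summand (with respect to $C_{(M,P)}$, under the identification of $K_0(\proj C_{(M,P)})$ with $\spann\{\Ccal_{(M,P)}\}^\perp$) is itself the nontrivial geometric input; asserting it is equivalent to asserting the square. So the proof is correct in outline and salvageable, but as written the central step is assumed rather than established; citing \cite[Lem. 3.8]{STTW2023} together with \cite[Lem. 4.4(3)]{Asai2019WS} (as the paper does) is what actually makes the biconditional follow, and with those in hand the rest of your argument, including the observation that the identification is only applied when $\Wcal_{(M_1,P_1)}=\Wcal_{(M_2,P_2)}$, goes through.
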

\begin{proof}
    This follows from \cite[Lem. 3.8]{STTW2023} and \cite[Lem. 4.4(3)]{Asai2019WS}. More precisely, \cite[Lem. 3.8]{STTW2023} is used to translate between the orthogonal projection map $\pi_{\overline{\Ccal}_{(M,P)}}: \R^{|A|} \to \spann\{ \Ccal_{(M,P)}\}^\perp$ as defined in \cref{defn:STTWdefn} and the map $\pi: K_0(\proj A) \to K_0(\proj \Wcal_{(M,P)})$ defined in \cite[p. 33]{Asai2019WS}. Then \cite[Lem. 4.4(3)]{Asai2019WS} implies that 
   \begin{align*}
   	&\pi_{\overline{\Ccal}_{(M_1,P_1)}}(\overline{\Ccal}_{(N_1,Q_1)}) = \pi_{\overline{\Ccal}_{(M_2,P_2)}}(\Ccal_{(N_2,Q_2)}) \\
	& \Leftrightarrow \Tcal_{(N_1,Q_1)} \cap \Wcal_{(M_1,P_1)} = \Tcal_{(N_1, Q_1)} \cap \Wcal_{(M_2,P_2)} \text{ and }  \Ucal_{(N_1,Q_1)} \cap \Wcal_{(M_1,P_1)} = \Ucal_{(N_1, Q_1)} \cap \Wcal_{(M_2,P_2)} \\
	& \Leftrightarrow ([\Ucal_{(N_1,Q_1)}, \Tcal_{(N_1,Q_1)}]) \cap \Wcal_{(M_1,P_1)} = [\Ucal_{(N_2,Q_2)}, \Tcal_{(N_2,Q_2)}] \cap \Wcal_{(M_2,P_2)}.
   \end{align*}
   Whence the result follows.
\end{proof}

Now we show that composition in $\Tfrak(A)$ is well-defined. The first concern, see \cite[Lem. 3.9]{STTW2023} is that it is not clear how to compose two morphisms $[f_{[\Ucal, \Tcal][\Vcal_1, \Scal_1]}]$ and $[f_{[\Vcal_2, \Scal_2][\Xcal_2, \Ycal_2]}]$ when $[\Vcal_1, \Scal_1] \sim [\Vcal_2, \Scal_2]$. The following lemma implies the existence of a morphism $f_{[\Vcal_1, \Scal_1][\Xcal_1, \Ycal_1]} \sim f_{[\Vcal_2, \Scal_2][\Xcal_2, \Ycal_2]}$ so that composition of the two morphisms is simply $[f_{[\Ucal, \Tcal][\Xcal_1, \Ycal_1]}]$.

\begin{lem} \label{lem:composition1}
    Let $(M_1,P_1),(M_2,P_2)$ be $\tau$-rigid pairs such that $\Wcal(M_1,P_1) = \Wcal(M_2,P_2)$. Then for every $\tau$-perpendicular interval 
    \[[\Ucal_{(N_1,Q_1)}, \Tcal_{(N_1,Q_1)}] \subseteq [\Ucal_{(M_1,P_1)}, \Tcal_{(M_1,P_1)}]\]
    there exists a $\tau$-perpendicular interval
    \[ [\Ucal_{(N_2,Q_2)}, \Tcal_{(N_2,Q_2)}] \subseteq [\Ucal_{(M_2,P_2)}, \Tcal_{(M_2,P_2)}]\]
    such that $\Wcal_{(N_1, Q_1)} = \Wcal_{(N_2,Q_2)}$.
\end{lem}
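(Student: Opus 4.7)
The strategy is to descend the interval into the common wide subcategory $\Wcal \coloneqq \Wcal_{(M_1,P_1)} = \Wcal_{(M_2,P_2)}$, obtain a relative $\tau$-rigid pair there, and then lift it back up through the second $\tau$-tilting reduction. By Jasso's $\tau$-tilting reduction \cite[Thm. 3.12]{Jasso2015}, there is an equivalence $\Wcal \cong \mods C$ for some finite-dimensional algebra $C$, and the $\tau$-perpendicular intervals of $\tors \Wcal$ correspond bijectively to relative $\tau$-rigid pairs of $\Wcal$. Combined with \cref{thm:wideintiso}, the intersection map $-\cap\Wcal$ restricts to a bijection between $\tau$-perpendicular intervals of $\tors A$ contained in $[\Ucal_{(M_i,P_i)}, \Tcal_{(M_i,P_i)}]$ and $\tau$-perpendicular intervals of $\tors \Wcal$, for $i=1,2$.

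Applying this with $i=1$, the interval $[\Ucal_{(N_1,Q_1)}, \Tcal_{(N_1,Q_1)}] \cap \Wcal$ is a $\tau$-perpendicular interval of $\tors \Wcal$ determined by some relative $\tau$-rigid pair $(N', Q')$ of $\Wcal$. Then applying the bijection in the opposite direction with $i=2$ yields a $\tau$-perpendicular interval $[\Ucal_{(N_2,Q_2)}, \Tcal_{(N_2,Q_2)}] \subseteq [\Ucal_{(M_2,P_2)}, \Tcal_{(M_2,P_2)}]$ of $\tors A$ whose intersection with $\Wcal$ is again the interval determined by $(N', Q')$; concretely, $(N_2, Q_2)$ is obtained by lifting $(M_2, P_2) \oplus (N', Q')$ along Jasso's equivalence.

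Finally, to show $\Wcal_{(N_1, Q_1)} = \Wcal_{(N_2,Q_2)}$ I invoke transitivity of $\tau$-tilting reduction: both $\Wcal_{(N_i, Q_i)}$ equal the $\tau$-perpendicular wide subcategory of $\Wcal$ cut out by the relative $\tau$-rigid pair $(N', Q')$. Since by assumption $\Wcal$ is literally the same subcategory of $\mods A$ in both cases, and $(N',Q')$ is the common relative pair, the two $\tau$-perpendicular wide subcategories coincide. The main obstacle I expect is justifying that $-\cap\Wcal$ genuinely restricts to a bijection on the level of $\tau$-perpendicular intervals (not merely on intervals), i.e.\ that every $\tau$-perpendicular interval of $\tors A$ inside $[\Ucal_{(M_i,P_i)}, \Tcal_{(M_i,P_i)}]$ comes from a relative $\tau$-rigid pair of $\Wcal$ and conversely; this needs a careful combination of \cref{thm:wideintiso}, the correspondence between functorially-finite torsion classes and $\tau$-rigid pairs \cite[Thm. 2.7]{AIR2014}, and Jasso's reduction, together with the transitivity of $\tau$-tilting reduction via summand pairs $(M_i, P_i) \oplus (N', Q')$.
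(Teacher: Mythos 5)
Your proposal is correct and is essentially the paper's argument: the paper likewise descends along the $\tau$-tilting reduction at $(M_1,P_1)$ --- via the explicit bijection $E_{(M_1,P_1)}$ of Buan--Marsh between $\tau$-rigid pairs over $(M_1,P_1)$ and relative $\tau$-rigid pairs in $\Wcal$ --- and lifts through $E_{(M_2,P_2)}^{-1}$, using order-preservation of these bijections (Jasso, Thm.~3.15) to identify the resulting interval inside $[\Ucal_{(M_2,P_2)}, \Tcal_{(M_2,P_2)}]$. The only notable difference is the final step and the bookkeeping: where you invoke transitivity of $\tau$-tilting reduction, the paper deduces $\Wcal_{(N_1,Q_1)}=\Wcal_{(N_2,Q_2)}$ from the brick-label-preserving isomorphism of \cref{thm:wideintiso} and \cref{prop:brickpreserve} together with Asai--Pfeifer/DIRRT, and by working throughout at the level of $\tau$-rigid pairs rather than intervals it sidesteps the interval-level bijection you flag as the main obstacle.
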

\begin{proof}
    There exists an explicit bijection $E_{(M_i,P_i)}$ from $\tau$-rigid pairs in $\mods A$ containing $(M_i,P_i)$ as a direct summand to $\tau$-rigid pairs in $\Wcal(M_i,P_i)$ for $i=1,2$, see \cite[Sec. 5]{BuanMarsh2021} and \cite[Def. 3.5]{BKT2025}. We define $(N_2,Q_2) = E_{(M_2,P_2)}^{-1} (E_{(M_1,P_1)}(N_1,Q_1))$. Moreover, these map are order-preserving on $\tau$-tilting pairs by \cite[Thm. 3.15]{Jasso2015}, see also \cite[Thm. 3.8]{BKT2025}. It follows that $\phi(\Ucal_{(N_1,Q_1)}) = \Ucal_{(N_2,Q_2)}$ and $\phi(\Tcal_{(N_1,Q_1)})= \Ucal_{(N_2,Q_2)}$. Hence 
    \[ \phi([\Ucal_{(N_1,Q_1)}, \Tcal_{(N_1,Q_1)}]) = [\Ucal_{(N_2,Q_2)}, \Tcal_{(N_2,Q_2)}].\]
    Moreover, $\phi$ preserves the brick-labelling by \cref{prop:brickpreserve}, thus \cite[Thm. 5.2, Prop. 5.3]{AsaiPfeifer2019}, see also \cite[Thm. 4.16]{DIRRT2017}, implies that $\Wcal_{(N_2,Q_2)} = \Wcal_{(N_1, Q_1)}$ as required.
\end{proof}

The second concern is whether the composition of pairwise-identified morphisms gives identified morphisms, see \cite[Lem. 3.10]{STTW2023}. However, this is easily seen to hold via the lattice isomorphism of \cref{thm:wideintiso}. We have thus constructed the $\tau$-cluster morphism category from $\tors A$.

\begin{thm}\label{thm:welldefn}
    The category $\Tfrak(A)$ is well-defined and equivalent to $\Cfrak(A)$.
\end{thm}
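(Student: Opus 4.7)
The plan is to verify the category axioms for $\Tfrak(A)$ and then exhibit an explicit isomorphism of categories with $\Cfrak(A)$ via the assignment \eqref{eq:gfantoitv}. Associativity of composition in $\Tfrak(A)$ is automatic, since morphisms are equivalence classes of morphisms in the poset category $\itv(\tors A)$, where composition is associative. Identity morphisms are likewise present trivially, given by the identity on each representative interval. So the only non-trivial content lies in showing that composition is well-defined on equivalence classes.

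First I would address the concern analogous to \cite[Lem. 3.9]{STTW2023}: given morphisms $[f_1]: [\Ucal, \Tcal]_\sim \to [\Vcal_1, \Scal_1]_\sim$ and $[f_2]: [\Vcal_2, \Scal_2]_\sim \to [\Xcal, \Ycal]_\sim$ whose chosen representatives disagree on the middle object, one must first produce compatible representatives before composition in $\itv(\tors A)$ is even defined. This is exactly where \cref{lem:composition1} applies: it lifts the $\tau$-perpendicular subinterval corresponding to the codomain of $f_2$ into the interval $[\Vcal_1, \Scal_1]$, producing a $\tau$-perpendicular interval $[\Xcal', \Ycal'] \subseteq [\Vcal_1, \Scal_1]$ with $\Wcal_{(X',Y')} = \Wcal_{(X,Y)}$. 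Composition with $f_1$ is then the inclusion $[\Ucal, \Tcal] \supseteq [\Xcal', \Ycal']$ in $\itv(\tors A)$.

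Next I would verify that this composition is independent of the choices made, the analog of \cite[Lem. 3.10]{STTW2023}. If $[f_2] \sim [f_2']$ and $[f_1] \sim [f_1']$, one must show the composites lie in the same equivalence class. The crucial ingredient is \cref{thm:wideintiso}: the lattice isomorphism between $\tau$-perpendicular intervals sharing a common wide subcategory commutes with the intersection $- \cap \Wcal$, and by \cref{prop:brickpreserve} it preserves the brick-labelling. Consequently the image of the composed morphism in $\tors \Wcal_{(M,P)}$ does not depend on the representatives, which is precisely the equivalence relation on morphisms in \cref{defn:latticedef}.

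Finally, for the equivalence with $\Cfrak(A)$, the assignment \eqref{eq:gfantoitv} descends to a functor $\Phi: \Cfrak(A) \to \Tfrak(A)$ that sends $[\Ccal_{(M,P)}]$ to $[\Ucal_{(M,P)}, \Tcal_{(M,P)}]_\sim$ and sends an inclusion of $g$-vector cones to the corresponding reverse inclusion of $\tau$-perpendicular intervals. \cref{lem:objtranslation} shows $\Phi$ is a bijection on objects, and \cref{lem:morphtranslation} shows $\Phi$ is bijective on Hom-sets, whence $\Phi$ is an isomorphism (and in particular an equivalence) of categories. The heavy lifting is distributed across \cref{lem:composition1,thm:wideintiso,lem:objtranslation,lem:morphtranslation}, so no single step should be a substantial obstacle; I expect the most delicate part to be careful bookkeeping of the interacting equivalence relations when checking that pairwise-identified morphisms compose to identified morphisms.
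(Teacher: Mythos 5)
Your proposal is correct and follows essentially the same route as the paper: well-definedness of composition via \cref{lem:composition1} for the choice of compatible representatives and \cref{thm:wideintiso} (with \cref{prop:brickpreserve}) for independence of choices, and the equivalence with $\Cfrak(A)$ via the functor induced by \cref{eq:gfantoitv} together with \cref{lem:objtranslation} and \cref{lem:morphtranslation}. The only minor difference is that the paper additionally spells out that inclusions of $g$-vector cones correspond exactly to containments of $\tau$-perpendicular intervals to conclude the bijection on $\Hom$-sets, a point your appeal to \cref{lem:morphtranslation} implicitly subsumes.
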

\begin{proof}
    The category is well-defined by \cref{lem:composition1}. The equivalence $\Psi: \Cfrak(A) \to \Tfrak(A)$ is induced by \cref{eq:gfantoitv}. We have seen in \cref{lem:objtranslation} and \cref{lem:morphtranslation} that $\Psi$ is well-defined on objects and morphisms. By definition, every $\tau$-perpendicular interval comes from a $\tau$-rigid pair $(M,P)$ and hence from a $g$-vector cone $\Ccal_{(M,P)}$ via \cref{eq:gfantoitv}, so $\Psi$ is essentially surjective. For the same reason, given two $\tau$-rigid pairs $(M,P)$ and $(N,Q)$, we have $\Ccal_{(N,Q)} \subseteq \Ccal_{(M,P)}$ if and only if $[\Ucal_{(N,Q)}, \Tcal_{(N,Q)}] \leq [\Ucal_{(M,P)},\Tcal_{(M,P)}]$. This determines the morphisms of $\Cfrak(A)$ and $\Tfrak(A)$ respectively, therefore we have bijections between $\Hom$-sets. So $\Psi$ is fully faithful and hence an equivalence.
\end{proof}

\begin{rmk}\label{rmk:latticecapturing}
    The structure of the poset of functorially-finite torsion classes and its brick labelling is encoded in the morphism spaces of the $\tau$-cluster morphism category. For example, there is a bijection
    \begin{align*} \Hom_{\Tfrak(A)}([0, \mods A]_\sim, [0,0]_\sim) \quad \longleftrightarrow \quad \Tcal \in \ftors A\end{align*}
    since $[0,0]_\sim = [\Tcal, \Tcal]_\sim$ for all $\Tcal \in \ftors A$. Each such interval is $\tau$-perpendicular because functorially finite torsion classes correspond bijectively to $\tau$-tilting pairs $(M,P)$ for which $\Fac M = {}^\perp \tau M \cap P^\perp$ by \cite[Prop. 2.16]{AIR2014}. Moreover, let $\Tcal_2 \xleftarrow{B} \Tcal_1 \subseteq \Hasse(\ftors A)$ be a cover relation labelled by the brick $B$, then there is a bijection
    \begin{align*}
    \Hom_{\Tfrak(A)}([0, \mods A]_\sim, [\Tcal_2, \Tcal_1]_\sim) \quad \longleftrightarrow \quad \{ \text{arrows labelled by $B$ in $\Hasse(\ftors A)$}\}. \end{align*}
    And more generally for an arbitrary $\tau$-perpendicular interval $[\Tcal_4, \Tcal_3] \subseteq \tors A$
    \begin{align*} &\Hom_{\Tfrak(A)}([0, \mods A]_\sim, [\Tcal_4, \Tcal_3]_\sim) \\ 
    &\longleftrightarrow \{ \text{$\tau$-perpendicular intervals of $\tors A$ with a brick-label preserving lattice isomorphism to $[\Tcal_4, \Tcal_3]$}\}. \end{align*}
    The same holds true when viewing any $\tau$-perpendicular interval $[\Ucal, \Tcal] \subseteq \tors A$ as the domain of the morphism space and restricting to $\tors [\Ucal, \Tcal]$. 
\end{rmk}

\section{Invariance under $\tau$-tilting equivalence} \label{sec:tauequiv}
In this section we consider the notion of $\tau$-tilting equivalence of two algebras as the existence of an isomphism between their respective posets of $\tau$-tilting pairs. The idea of using only lattice-theoretic information to study $\tau$-tilting theory is common, see for example \cite{AsaiPfeifer2019, BarnardCarrolZhu19, BarnardHanson2022exc, DIRRT2017, Enomoto2023, kase2017}. Using ideas similar to these authors, we show how \cref{defn:latticedef} can be rephrased so that the $\tau$-cluster morphism category can be determined only from the underlying poset of (functorially-finite) torsion classes when it is finite.

\begin{defn}
Let $A$ and $B$ be two finite-dimensional algebras. We say that $A$ and $B$ are \textit{$\tau$-tilting equivalent} if there exists a poset isomorphism $\ftors A \cong \ftors B$. 
\end{defn}

\begin{exmp}\label{exmp:tauequiv}The following algebras are $\tau$-tilting equivalent:
\begin{enumerate}
\item Any algebra $A$ and $A/ \langle c \rangle$, where $c \in A$ is a central element contained in the Jacobson radical \cite{EJR2018}. See also \cite[Cor. 5.20]{DIRRT2017}.
\item Two algebras $A$ and $B$ which geometrically have coinciding $g$-vector fan, by the duality of the poset $\ftors A$ with the chambers of the $g$-vector fan, by \cite{DIJ2019}.
\item The algebras $K_2 \coloneqq K(1 \xrightarrow{2} 2)$ and $K_3 \coloneqq K(1 \xrightarrow{m} 2)$, for any $m \geq 3$, by the description of $\ftors (K_2)$ and $\ftors (K_3)$ as polygons with one infinite side. 
\end{enumerate}
\end{exmp}

We note that $\ftors A$ is a lattice if and only if $A$ is \textit{$\tau$-tilting finite} i.e. if $\tors A = \ftors A$ \cite{IRTT15}. However, since $\ftors A \subseteq \tors A$, we can consider the join and meet of elements of $\ftors A$ in $\tors A$. The first example of $\tau$-tilting theory of $\mods A$ being encoded lattice-theoretically in $\ftors A$ is that of $\tau$-perpendicular intervals. 

\begin{prop}\cite[Prop. 4.19 and 4.20]{DIRRT2017}\label{prop:DIRRT420}
    Let $\Ucal \in \ftors A$. Consider $\ell$ atoms $\Ucal_i \to \Ucal \subseteq \Hasse(\ftors A)$. Then $\Tcal \coloneqq \bigvee_{i=1}^\ell \Ucal_i \in \ftors A$ and $[\Ucal, \Tcal] \subseteq \tors A$ is a $\tau$-perpendicular interval of $\tors A$. Moreover, every $\tau$-perpendicular interval is a so-called \emph{join-interval} of this form.
\end{prop}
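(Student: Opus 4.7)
\emph{Proof plan.} The overall strategy is to exploit the bijection between functorially finite torsion classes in $\ftors A$ and support $\tau$-tilting pairs established in \cite[Thm. 2.7]{AIR2014}, combined with the brick labeling of $\Hasse(\tors A)$ and the lattice isomorphism of \cref{thm:wideintiso}. The latter identifies every $\tau$-perpendicular interval $[\Ucal_{(M,P)}, \Tcal_{(M,P)}]$ with $\tors \Wcal_{(M,P)}$ and preserves Hasse structure and brick labels by \cref{prop:brickpreserve}, allowing us to move freely between the two viewpoints.

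For the forward direction, given $\Ucal \in \ftors A$ and $\ell$ atoms $\Ucal \lessdot \Ucal_i$ with brick labels $B_i$, we first show that the bricks $B_1, \dots, B_\ell$ are distinct and pairwise Hom-orthogonal and that $\Ucal^\perp \cap \bigvee_i \Ucal_i = \Filt_A(B_1, \dots, B_\ell)$, using general properties of brick labels under joins. We then produce the desired $\tau$-rigid pair $(M, P)$ by starting from a support $\tau$-tilting pair $(M_0, P_0)$ with $\Fac M_0 = \Ucal$ and identifying each atom $\Ucal_i$ with a left mutation of $(M_0, P_0)$ at a distinguished indecomposable summand $X_i$ of $M_0 \oplus P_0$. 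Removing exactly these $\ell$ summands yields $(M, P)$ with $|M| + |P| = |A| - \ell$, after which one verifies both $\Fac M = \Ucal$ and ${}^\perp \tau M \cap P^\perp = \bigvee_i \Ucal_i$ using the explicit descriptions of these subcategories for partial $\tau$-rigid pairs together with the Bongartz completion.

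For the reverse direction, we start from a $\tau$-perpendicular interval $[\Ucal, \Tcal]$ associated to a $\tau$-rigid pair $(M, P)$ and use the isomorphism $[\Ucal, \Tcal] \cong \tors \Wcal_{(M,P)}$ to transfer the problem into the wide subcategory $\Wcal_{(M,P)} \cong \mods C$, where $C$ is the $\tau$-tilting reduction of rank $\ell = |A| - |M| - |P|$. The atoms above $\{0\}$ in $\ftors \Wcal_{(M,P)}$ correspond under this equivalence to the simples of $C$, and their join in $\tors C$ is the top element $\mods C$. Pulling back through \cref{thm:wideintiso} expresses $\Tcal$ as the join of precisely these $\ell$ atoms $\Ucal_i$ above $\Ucal$ in $\ftors A$, as required.

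The main obstacle is the verification in the forward direction that the pair $(M, P)$ obtained by removing $\ell$ summands from $(M_0, P_0)$ has the prescribed $\Fac M$ and ${}^\perp \tau M \cap P^\perp$. This relies on a careful analysis of how left mutation interacts with Bongartz completion and essentially encodes the content of the $\tau$-tilting reduction theorem \cite[Thm. 3.12]{Jasso2015}; once this is in place, the two directions fit together cleanly via the wide-subcategory picture.
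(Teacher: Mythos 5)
The paper does not prove this statement at all: it is imported verbatim from \cite[Prop.~4.19 and 4.20]{DIRRT2017}, so there is no internal argument to compare with, and your outline essentially reconstructs the strategy of the cited proof (mutation theory from \cite{AIR2014}, Bongartz/co-Bongartz completions, $\tau$-tilting reduction \cite{Jasso2015}, brick labels). Judged as a proof, however, it has genuine gaps, because the pivotal verifications are asserted or deferred rather than carried out. Concretely: (i) the opening claim that the labels $B_1,\dots,B_\ell$ of the chosen up-arrows are pairwise $\Hom$-orthogonal and that $\Ucal^{\perp}\cap\bigvee_i\Ucal_i=\Filt_A\{B_1,\dots,B_\ell\}$ is not a ``general property of brick labels under joins'' available for free; it is essentially the content of the proposition itself (cf. \cite[Thm.~4.16]{DIRRT2017}), so invoking it at the start is circular unless you prove it; (ii) functorial finiteness of $\Tcal=\bigvee_i\Ucal_i$, which is part of the statement, is never addressed; (iii) the key step of the forward direction --- that deleting the $\ell$ summands $X_i$ from $(M_0,P_0)$ yields a $\tau$-rigid pair $(M,P)$ with $\Fac M=\Ucal$ and ${}^{\perp}\tau M\cap P^{\perp}=\bigvee_i\Ucal_i$ --- is exactly what you flag as ``the main obstacle'' and leave unproved. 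The inclusion $\bigvee_i\Ucal_i\subseteq{}^{\perp}\tau M\cap P^{\perp}$ is easy (each $\Ucal_i$ is the torsion class of a support $\tau$-tilting pair containing $(M,P)$ as a summand, hence lies in the interval), but the reverse inclusion is the heart of \cite[Prop.~4.19]{DIRRT2017} and does not follow formally from quoting \cite[Thm.~3.12]{Jasso2015}; it needs the inductive interplay of reduction and mutation that you only gesture at.

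The reverse direction has a matching gap: after transporting to $\tors\Wcal_{(M,P)}$ via \cref{thm:wideintiso} and \cref{prop:brickpreserve}, you still must show that the $\ell$ covers of $\Ucal$ inside $[\Ucal,\Tcal]$ corresponding to the simples of $C$ are functorially finite and are covers in $\Hasse(\ftors A)$ (the statement concerns atoms of $\ftors A$, not of $\tors A$), and that the join of the atoms of $\ftors C$ above $0$ is all of $\mods C$; the latter is an instance of the forward direction, so the two halves must be ordered carefully to avoid circularity. A minor slip: with the conventions of \cite{AIR2014}, the covers $\Ucal_i\gtrdot\Ucal$ are not left mutations of $(M_0,P_0)$ (left mutation decreases the torsion class); rather $(M_0,P_0)$ is the left mutation of the pair corresponding to $\Ucal_i$. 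None of this says the strategy is wrong --- it is the strategy of \cite{DIRRT2017} --- but as written the proposal presupposes the substance of the result it sets out to prove.
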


In \cite{AsaiPfeifer2019} it is shown the construction of \cref{prop:DIRRT420} gives all intervals $[\Ucal, \Tcal] \subseteq \tors A$ for which $\Ucal^\perp \cap \Tcal$ is a wide subcategory of $\mods A$. To obtain only the $\tau$-perpendicular subcategories, we require knowledge of the subset $\ftors A \subseteq \tors A$ which cannot be determined lattice-theoretically from $\tors A$. For this reason we restrict ourselves to the case when $A$ is $\tau$-tilting finite, which implies $\ftors A = \tors A$ by \cite[Thm. 3.8]{DIJ2019}. \\

In this case, $\tau$-perpendicular intervals and join-intervals (and meet-intervals) all coincide. In order to distinguish between intervals of the same shape but which correspond to distinct subcategories we need to recover the brick labelling of $\tors A$ combinatorially, since the brick labelling determines the corresponding $\tau$-perpendicular subcategory by \cite[Thm. 4.16]{DIRRT2017} and \cite[Prop. 5.3]{AsaiPfeifer2019}.\\

Let $L$ be a lattice, an element $j \in L$ is called \textit{completely join-irreducible} if there does not exist a subset $S \subseteq L$ such that $j = \bigvee S$ and $j \not \in S$. A join-irreducible element $j \in L$ covers exactly one other element $j_*$. Take $x \lessdot y$ then the set $\{ t \in L : t \lor x = y\} \subseteq L$ has a minimum element $\ell \in L$ which is completely join-irreducible and satisfies $\ell_* \leq x$ by \cite[Lem. 3.7]{FTFSL}. This inspires the following definition, similar to \cite[Rmk. 3.8]{FTFSL}.

\begin{defn}
Let $L$ be a completely semidistributive lattice. The \textit{join-irreducible labelling} of $L$ associates to each arrow $x \leftarrow y \subseteq \Hasse(L)$ the unique completely join-irreducible element $j$ which is minimal in the set $\{ t \in L : x \lor t = y\} \subseteq L$.
\end{defn}

\begin{lem} \cite[Thm. 3.11]{DIRRT2017} \label{lem:labelcoincide}
Let $L$ be an abstract lattice isomorphic to $\tors A$, then the join-irreducible labelling on $L$ corresponds to the brick labelling of $\tors A$. In other words, the brick labelling is determined combinatorially by the underlying lattice structure.
\end{lem}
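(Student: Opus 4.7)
The plan is to establish the equivalence of the two labellings first on $\tors A$ itself and then transport it along the given lattice isomorphism. The essential input is \cite[Thm. 3.11]{DIRRT2017}, which I would read as providing (a) a bijection between bricks $B$ of $\mods A$ and completely join-irreducible elements of $\tors A$, via $B \mapsto \Trm(B) = \Filt_A(\Fac B)$, and (b) the characterisation that on a cover $\Ucal \lessdot \Tcal$ in $\tors A$ with brick label $B$, the element $\Trm(B)$ is precisely the minimum of the set $\{\Scal \in \tors A : \Ucal \lor \Scal = \Tcal\}$. Taken together these say that, intrinsically in $\tors A$, the join-irreducible labelling (as defined just before the lemma, with values in completely join-irreducible elements) coincides with the brick labelling composed with the bijection $B \leftrightarrow \Trm(B)$.

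Next, for an abstract lattice $L$ together with an isomorphism $\eta : L \xrightarrow{\cong} \tors A$, I would observe that $\eta$ preserves arbitrary joins, cover relations, and hence completely join-irreducible elements. Therefore, for any cover $x \lessdot y$ in $L$, the minimum of the set $\{t \in L : x \lor t = y\}$ is mapped by $\eta$ to the minimum of $\{\Scal \in \tors A : \eta(x) \lor \Scal = \eta(y)\}$. Combining this with the first step, the join-irreducible label of $x \lessdot y$ in $L$ corresponds under $\eta$ and the bijection $B \leftrightarrow \Trm(B)$ to the brick label of $\eta(x) \lessdot \eta(y)$ in $\tors A$, which is the asserted correspondence.

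The one step that really carries the weight of the argument is the lattice-theoretic characterisation of the brick label as the minimum element of $\{\Scal : \Ucal \lor \Scal = \Tcal\}$; this is where the algebraic content of the brick labelling is translated into the intrinsic lattice language, and it is exactly what \cite[Thm. 3.11]{DIRRT2017} delivers. Once that is in hand, the remainder of the proof is bookkeeping about how an abstract lattice isomorphism interacts with the construction, and the second sentence of the lemma, that the brick labelling is combinatorially determined, follows immediately from the fact that the join-irreducible labelling on $L$ is defined purely in terms of cover relations and joins.
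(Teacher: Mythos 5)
Your proposal is correct and takes essentially the same route as the paper: the paper gives no argument beyond citing \cite[Thm.~3.11]{DIRRT2017}, which is exactly the input you isolate (the bijection $B \leftrightarrow \Trm(B)$ between bricks and completely join-irreducible torsion classes together with the identification of the brick label of a cover $\Ucal \lessdot \Tcal$ with the minimum of $\{\Scal : \Ucal \lor \Scal = \Tcal\}$). The remaining transport along an abstract lattice isomorphism, which preserves joins, covers and completely join-irreducible elements, is the same routine step the paper leaves implicit.
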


\begin{rmk}
    This fact was used in \cite{Enomoto2023} to recover the posets of wide and so-called ICE-closed subcategories of $\mods A$ from the lattice theoretic information of $\tors A$. And we have taken inspiration for the formulation of the following theorem from \cite[Thm. A]{Enomoto2023}.
\end{rmk}

For an interval $[x,y] \subseteq L$ write $\jirrc [x,y]$ for the set of completely-join irreducible elements $j \in L$ arising as labels of cover relations of $x \leq a \lessdot b \leq y$. The following is the main result of this section and follows the same idea as the previous remark. 

\begin{thm}\label{thm:categoryiso}
Let $L$ be an abstract finite lattice isomorphic to $\tors A$ for some finite-dimensional algebra $A$. Then we can define $\Tfrak(A)$ combinatorially from $L$ without using any algebraic information of $A$ or $\mods A$. More precisely, we have an equivalence of categories $\Tfrak(L) \cong \Tfrak(A)$, where $\Tfrak(L)$ is defined to be the category:
\begin{itemize}
	\item whose objects are equivalence classes of join-intervals $[\Ucal, \Tcal] = [\Ucal, \bigvee_{i=1}^\ell \Ucal_i] \subseteq L$ under the equivalence
	\[ [\Ucal_1, \Tcal_1] \sim [\Ucal_2, \Tcal_2] \]
	whenever $\jirrc[\Ucal_1, \Tcal_1] = \jirrc[\Ucal_2, \Tcal_2]$. In this case there exists an isomorphism  $\phi_{[\Ucal_1,\Tcal_1][\Ucal_2, \Tcal_2]}: [\Ucal_1, \Tcal_1] \to [\Ucal_2, \Tcal_2]$ preserving the join-irreducible labels (in $L$) of the arrows in the Hasse diagrams;
	\item and whose morphisms are given by equivalence classes of morphisms in the poset category $\itv (L)$ of intervals of $L$ partially-ordered by reverse containment. More precisely
	\begin{align*}
            \Hom_{\Tfrak(L)}([\Ucal, \Tcal]_\sim, [\Vcal, \Scal]_\sim) =\bigcup_{\substack{[\Ucal', \Tcal'] \in [\Ucal, \Tcal]_\sim \\ [\Vcal', \Scal'] \in [\Vcal, \Scal]_\sim}} \Hom_{\itv(L)} ( [\Ucal', \Tcal'], [\Vcal', \Scal'])
        \end{align*}
    under the equivalence relation $f_{[\Ucal_1, \Tcal_1][\Vcal_1, \Scal_1]} \sim f_{[\Ucal_2, \Tcal_2][\Vcal_2, \Scal_2]}$
    whenever 
    \[\phi_{[\Ucal_1, \Tcal_1][\Ucal_2, \Tcal_2]}( [\Vcal_1, \Scal_1]) = [\Vcal_2, \Scal_2]. \]
\end{itemize}
\end{thm}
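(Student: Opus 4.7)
The plan is to construct an equivalence $\Psi: \Tfrak(L) \to \Tfrak(A)$ which is essentially the identity after fixing a lattice isomorphism $L \cong \tors A$, and then check that both equivalence relations (on objects and on morphisms) defining $\Tfrak(L)$ match those defining $\Tfrak(A)$. Since $\tors A$ is finite, $A$ is $\tau$-tilting finite and $\ftors A = \tors A$, so by \cref{prop:DIRRT420} join-intervals in $\tors A$ coincide exactly with $\tau$-perpendicular intervals. Transporting this along the isomorphism $L \cong \tors A$ gives a bijection between join-intervals of $L$ and $\tau$-perpendicular intervals of $\tors A$, so the collections of raw objects and raw morphisms match on the nose.

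Next, I would verify that the equivalence relation on objects coincides. By \cref{lem:labelcoincide} the join-irreducible labelling on $L$ corresponds to the brick labelling on $\tors A$, so $\jirrc[\Ucal_1, \Tcal_1] = \jirrc[\Ucal_2, \Tcal_2]$ translates to $\brick[\Ucal_1, \Tcal_1] = \brick[\Ucal_2, \Tcal_2]$. By \cite[Thm. 4.16]{DIRRT2017} and \cite[Prop. 5.3]{AsaiPfeifer2019} the brick labels of a $\tau$-perpendicular interval determine (and are determined by) the associated $\tau$-perpendicular wide subcategory $\Wcal_{(M,P)}$. Hence equality of brick sets is equivalent to $\Wcal_{(M_1,P_1)} = \Wcal_{(M_2,P_2)}$, which is precisely the defining equivalence relation on objects of $\Tfrak(A)$. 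The required label-preserving isomorphism $\phi_{[\Ucal_1,\Tcal_1][\Ucal_2,\Tcal_2]}$ is then supplied by \cref{thm:wideintiso} together with \cref{prop:brickpreserve}.

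For the equivalence relation on morphisms, the key observation is that under \cref{thm:wideintiso} the label-preserving isomorphism $\phi_{[\Ucal_1,\Tcal_1][\Ucal_2,\Tcal_2]}$ factors as the composite of $-\cap\Wcal_{(M_1,P_1)}$ with the inverse of $-\cap\Wcal_{(M_2,P_2)}$, and both legs of this composition are lattice isomorphisms onto $\tors\Wcal_{(M_1,P_1)} = \tors\Wcal_{(M_2,P_2)}$. Consequently, for two morphisms $f_{[\Ucal_i,\Tcal_i][\Vcal_i,\Scal_i]}$ with $i=1,2$, the condition $\phi_{[\Ucal_1,\Tcal_1][\Ucal_2,\Tcal_2]}([\Vcal_1,\Scal_1]) = [\Vcal_2,\Scal_2]$ holds if and only if $[\Vcal_1,\Scal_1]\cap \Wcal_{(M_1,P_1)} = [\Vcal_2,\Scal_2]\cap\Wcal_{(M_2,P_2)}$, which is the morphism equivalence relation of $\Tfrak(A)$ from \cref{defn:latticedef}. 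Thus $\Psi$ is well defined and bijective on both Hom-sets and objects; functoriality is immediate because composition in both categories is inherited from the poset category $\itv(\tors A)\cong \itv(L)$.

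The main obstacle — and the one that justifies the whole combinatorial rephrasing — is the identification of $\phi_{[\Ucal_1,\Tcal_1][\Ucal_2,\Tcal_2]}$ with the algebraically defined $\tau$-tilting reduction isomorphism. To make this rigorous I would argue that any poset isomorphism between two finite join-intervals of $L$ is determined by how it permutes cover relations, and a label-preserving one is unique once the labels agree; on the algebraic side the isomorphism of \cref{thm:wideintiso} is label-preserving by \cref{prop:brickpreserve}, so the two isomorphisms must agree. Everything else (well-definedness of the category $\Tfrak(L)$, associativity of composition, and fully faithfulness of $\Psi$) then follows formally from the corresponding facts for $\Tfrak(A)$ established in \cref{thm:welldefn}.
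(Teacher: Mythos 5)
Your proposal is correct and follows essentially the same route as the paper's proof: finiteness gives $\ftors A = \tors A$ so join-intervals of $L$ correspond to $\tau$-perpendicular intervals (\cref{prop:DIRRT420}), \cref{lem:labelcoincide} translates $\jirrc$ into brick labels, the DIRRT results identify equality of brick sets with equality of the associated wide subcategories, and \cref{thm:wideintiso} with \cref{prop:brickpreserve} supplies the label-preserving isomorphism whose factorization through $\tors \Wcal$ matches the two morphism equivalence relations. The only soft spot is your uniqueness claim for label-preserving isomorphisms, which you assert rather than justify (it would need, e.g., that a torsion class in the finite case is determined by the bricks it contains); the paper sidesteps this by simply taking $\phi_{[\Ucal_1,\Tcal_1][\Ucal_2,\Tcal_2]}$ to be the transported algebraic isomorphism $\psi^{-1}\varphi\psi$ rather than an arbitrary label-preserving one.
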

\begin{proof}
Denote by $\psi: L \to \tors A$ the isomorphism of complete lattices. Since $L$ is finite it follows that $\tors A$ is finite and hence $\ftors A = \tors A$ by \cite[Thm. 3.8]{DIJ2019}. Let $[\Ucal_1, \Tcal_1]$ and $[\Ucal_2, \Tcal_2]$ be two distinct join-intervals of $L$ satisfying $\jirrc[\Ucal_1, \Tcal_1] = \jirrc[\Ucal_2, \Tcal_2]$. By \cref{prop:DIRRT420} and \cref{lem:labelcoincide}, this is equivalent to saying that $\psi([\Ucal_1, \Tcal_1])$ and $\psi([\Ucal_2, \Tcal_2])$ are $\tau$-perpendicular intervals of $\tors A$ such that $\brick(\psi([\Ucal_1, \Tcal_1])) = \brick(\psi([\Ucal_2, \Tcal_2]))$. Moreover, by \cite[Lem. 3.10]{DIRRT2017} we have
\[ \psi(\Ucal_1)^\perp \cap \psi(\Tcal_1) = \Filt(\brick(\psi([\Ucal_1,\Tcal_1]))) = \Filt(\brick(\psi([\Ucal_2, \Tcal_2]))) = \psi(\Ucal_2)^\perp \cap \psi(\Tcal_2).\]

It follows that $[\Ucal_1, \Tcal_1] \sim_{\Tfrak(L)} [\Ucal_2, \Tcal_2]$ implies $\psi([\Ucal_1, \Tcal_1]) \sim_{\Tfrak(A)} \psi([\Ucal_2, \Tcal_2])$. Moreover, then \cref{thm:wideintiso} and \cref{prop:brickpreserve} imply that there is a brick label-preserving isomorphism $\varphi: \psi([\Ucal_1, \Tcal_1]) \to \psi([\Ucal_2, \Tcal_2])$. By \cref{lem:labelcoincide} $\varphi$ lifts to a join-irreducible-label-preserving isomorphisms 
\[ \phi_{[\Ucal_1, \Tcal_1][\Ucal_2, \Tcal_2]} \coloneqq \psi^{-1} \varphi \psi: [\Ucal_1, \Tcal_1] \to [\Ucal_2 , \Tcal_2]. \]

Let $[\Vcal_i, \Scal_i] \subseteq L$ be join-intervals such that $[\Vcal_i, \Scal_i]  \subseteq [\Ucal_i, \Tcal_i]$ for $i=1,2$. Then $f_{[\Ucal_1, \Tcal_1][\Vcal_1, \Scal_1]} \sim_{\Tfrak(L)} f_{[\Ucal_2, \Tcal_2][\Vcal_2, \Scal_2]}$ means $\phi_{[\Ucal_1, \Tcal_1][\Ucal_2, \Tcal_2]}([\Vcal_1, \Scal_1]) = [\Vcal_2, \Scal_2]$ which implies 
\[ \psi([\Vcal_1, \Scal_1]) \cap (\psi(\Ucal_1)^\perp \cap \psi(\Tcal_1)) = \psi([\Vcal_2, \Scal_2]) \cap (\psi(\Ucal_2)^\perp \cap \psi(\Tcal_2))\]
as $\varphi$ factors through $\tors (\psi(\Ucal_1)^\perp \cap \psi(\Tcal_1)) = \tors (\psi(\Ucal_2)^\perp \cap \psi(\Tcal_2))$. Thus 
\[ f_{[\Ucal_1, \Tcal_1][\Vcal_1, \Scal_1]} \sim_{\Tfrak(L)} f_{[\Ucal_2, \Tcal_2][\Vcal_2, \Scal_2]} \Longrightarrow f_{\psi([\Ucal_1, \Tcal_1])\psi([\Vcal_1, \Scal_1])} \sim_{\Tfrak(A)} f_{\psi([\Ucal_2, \Tcal_2])\psi([\Vcal_2, \Scal_2])}.\]

Conversely, let $[\Ucal_1', \Tcal_1']$ and $[\Ucal_2', \Tcal_2']$ be $\tau$-perpendicular intervals of $\tors A$ such that $(\Ucal_1')^\perp \cap \Tcal_1' = (\Ucal_2')^\perp \cap \Tcal_2'$, then \cref{thm:wideintiso} and
\cref{prop:brickpreserve} imply that there exists a brick-label preserving isomorphism $\varphi': [\Ucal_1', \Tcal_1'] \to [\Ucal_2', \Tcal_2']$ which lifts to a join-irreducible-preserving isomorphism
\[ \phi_{[\Ucal_1', \Tcal_1'][\Ucal_2', \Tcal_2']}' \coloneqq \psi^{-1} \varphi' \psi: \psi^{-1}([\Ucal_1', \Tcal_1')] \to \psi^{-1}([\Ucal_2' , \Tcal_2']). \]

From \cref{lem:labelcoincide} it follows that $\jirrc(\psi^{-1}([\Ucal_1', \Tcal_1'])) = \jirrc(\psi^{-1}([\Ucal_1', \Tcal_1']))$. Therefore 
\[ [\Ucal_1', \Tcal_1'] \sim_{\Tfrak(A)} [\Ucal_2', \Tcal_2'] \quad \Longrightarrow \quad \psi^{-1}([\Ucal_1', \Tcal_1']) \sim_{\Tfrak(L)} \psi^{-1}([\Ucal_2', \Tcal_2']). \]

Consequently the objects of $\Tfrak(A)$ and $\Tfrak(L)$ are in bijection. Now let $[\Vcal_i', \Scal_i']$ be $\tau$-perpendicular intervals such that $[\Vcal_i', \Scal_i']  \subseteq [\Ucal_i', \Tcal_i']$ for $i=1,2$. Then $f_{[\Ucal_1', \Tcal_1'] [\Vcal_1', \Scal_1']} \sim_{\Tfrak(A)} f_{[\Ucal_2', \Tcal_2'][\Vcal_2', \Scal_2']}$ means $[\Vcal_1', \Scal_1'] \cap ((\Ucal_1')^\perp \cap \Tcal_1') = [\Vcal_2', \Scal_2'] \cap ((\Ucal_2')^\perp \cap \Tcal_2')$. Since $\varphi'$ factors through $\tors ((\Ucal_1')^\perp \cap \Tcal_1') = \tors ((\Ucal_2')^\perp \cap \Tcal_2')$, it follows that 
\[ \phi_{[\Ucal_1', \Tcal_1'][\Ucal_2', \Tcal_2']}([\Vcal_1', \Scal_1'] ) = [\Vcal_2', \Scal_2']. \]
Therefore $\phi_{[\Ucal_1', \Tcal_1'][\Ucal_2', \Tcal_2']}' \psi^{-1}([\Vcal_1', \Scal_1']) = ([\Vcal_2', \Scal_2'])$, which imples 
\[ f_{[\Ucal_1', \Tcal_1'][\Vcal_1', \Scal_1']} \sim_{\Tfrak(A)} f_{[\Ucal_2', \Tcal_2'][\Vcal_2', \Scal_2']} \quad \Longrightarrow \quad f_{\psi^{-1}([\Ucal_1', \Tcal_1'])\psi^{-1}([\Vcal_1', \Scal_1'])} \sim_{\Tfrak(L)} f_{\psi^{-1}([\Ucal_2', \Tcal_2'])\psi([\Vcal_2', \Scal_2'])}. \]
Hence morphisms sets of $\Tfrak(A)$ and $\Tfrak(L)$ are in bijection and the construction of the $\Tfrak(A)$ and $\Tfrak(L)$ coincide. As a consequence, the categories $\Tfrak(A)$ and $\Tfrak(L)$ are equivalent.
\end{proof}

As an immediate consequence we get the following.

\begin{cor}\label{cor:equivcats}
    Let $A$ be $\tau$-tilting finite and $B$ be $\tau$-tilting equivalent to $A$. Then there exists an equivalence of categories $\Tfrak(A) \cong \Tfrak(B)$.
\end{cor}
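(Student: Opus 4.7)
The plan is to reduce the claim to a direct application of \cref{thm:categoryiso}. The key observation is that $\tau$-tilting equivalence with a $\tau$-tilting finite algebra forces the other algebra to be $\tau$-tilting finite as well, and the given poset isomorphism then automatically upgrades to a lattice isomorphism of the full lattices of torsion classes.

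First, since $A$ is $\tau$-tilting finite, we have $\ftors A = \tors A$ and this is a finite lattice. By the definition of $\tau$-tilting equivalence there is a poset isomorphism $\ftors A \cong \ftors B$, so $\ftors B$ is finite. By the characterisation of $\tau$-tilting finiteness recalled before \cref{prop:DIRRT420} (namely \cite[Thm.~3.8]{DIJ2019} together with the criterion of \cite{IRTT15}), finiteness of $\ftors B$ forces $\ftors B = \tors B$, so $B$ is $\tau$-tilting finite as well. A poset isomorphism between two finite lattices automatically preserves the existing joins and meets, hence one obtains an isomorphism of (complete) lattices
\[
\psi_{AB} \colon \tors A \;=\; \ftors A \;\xrightarrow{\;\cong\;}\; \ftors B \;=\; \tors B .
\]

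Denote this shared abstract finite lattice by $L$. \cref{thm:categoryiso} applies both to $A$ and to $B$ via $\psi_{AB}$, giving equivalences of categories
\[
\Tfrak(A) \;\cong\; \Tfrak(L) \;\cong\; \Tfrak(B),
\]
and composing them yields the desired equivalence $\Tfrak(A) \cong \Tfrak(B)$.

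I do not anticipate a genuine obstacle here: all of the substantive work, in particular the combinatorial recovery of the equivalence relations on objects and morphisms of $\Tfrak(-)$ from the underlying lattice via join-intervals and the join-irreducible labelling, is already carried out in \cref{thm:categoryiso}. The only points requiring a line of justification are the passage from $\tau$-tilting equivalence plus $\tau$-tilting finiteness of $A$ to $\tau$-tilting finiteness of $B$, and the upgrading of the given poset isomorphism to a lattice isomorphism, both of which are immediate.
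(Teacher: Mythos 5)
Your proposal is correct and follows essentially the same route as the paper: deduce that $B$ is $\tau$-tilting finite from the finiteness of $\ftors B \cong \ftors A = \tors A$, note that the poset isomorphism of finite lattices is a lattice isomorphism, and apply \cref{thm:categoryiso} to both algebras to obtain $\Tfrak(A) \cong \Tfrak(L) \cong \Tfrak(B)$. The extra justifications you flag (finiteness forcing $\ftors B = \tors B$, and the poset-to-lattice upgrade) are exactly the points the paper's shorter proof leaves implicit.
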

\begin{proof}
    Since $A$ is $\tau$-tilting finite $\tors A = \ftors A \cong \ftors B$ and therefore $B$ is $\tau$-tilting finite. Then both are isomorphic to an abstract finite lattice $L \cong \ftors A$ and we get $\Tfrak(A) \cong \Tfrak(L) \cong \Tfrak(B)$ from \cref{thm:categoryiso}.
\end{proof}

As an alternative definition, see \cite[Prop. 11.7]{BuanMarsh2021}, \textit{signed $\tau$-exceptional sequences} can be defined as factorisations of morphisms in $\Tfrak(A)$ into irreducible morphisms. As a consequence of \cref{thm:categoryiso}, we get the following. 
\begin{cor}\label{cor:signedexcep}
    Let $A$ be $\tau$-tilting finite and $B$ be $\tau$-tilting equivalent to $A$. Then there exists a bijection  
    \[ \{ \text{signed $\tau$-exceptional sequences of $\mods A$}\} \longleftrightarrow \{ \text{signed $\tau$-exceptional sequences of $\mods B$}\}. \]
\end{cor}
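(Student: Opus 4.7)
The plan is to reduce the statement to a direct categorical consequence of \cref{cor:equivcats}. By the characterisation cited immediately before the corollary \cite[Prop.~11.7]{BuanMarsh2021}, signed $\tau$-exceptional sequences of $\mods A$ are in bijection with factorisations of morphisms in $\Tfrak(A)$ into irreducible morphisms, and likewise for $\mods B$ via $\Tfrak(B)$. Since both algebras are $\tau$-tilting finite and $\tau$-tilting equivalent, \cref{cor:equivcats} supplies an equivalence of categories $\Phi \colon \Tfrak(A) \xrightarrow{\simeq} \Tfrak(B)$ together with a quasi-inverse $\Psi$.

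First I would observe that irreducibility of a morphism is a purely categorical property: a non-identity morphism $f$ is irreducible when every factorisation $f = g \circ h$ forces $g$ or $h$ to be an isomorphism. Since $\Phi$ and $\Psi$ are both equivalences, they send isomorphisms to isomorphisms and preserve compositions up to natural isomorphism. From this it follows directly that $\Phi$ sends irreducibles to irreducibles, and $\Psi$ does the same in the opposite direction, so $\Phi$ restricts to a bijection on the class of irreducible morphisms.

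Next I would transport factorisations through $\Phi$: any factorisation $f = f_n \circ \cdots \circ f_1$ of a morphism in $\Tfrak(A)$ into irreducibles yields a factorisation $\Phi(f) = \Phi(f_n) \circ \cdots \circ \Phi(f_1)$ into irreducibles in $\Tfrak(B)$, and symmetrically for $\Psi$. Using that $\Psi \Phi$ is naturally isomorphic to the identity, the two assignments are mutually inverse up to the natural isomorphism, and hence descend to a genuine bijection between the sets of factorisations into irreducible morphisms on either side. Combining with the identification of such factorisations with signed $\tau$-exceptional sequences yields the desired bijection.

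I do not anticipate a substantial obstacle in this argument: it is entirely formal once \cref{cor:equivcats} is available. The only point requiring a little care is to verify that the identification of signed $\tau$-exceptional sequences with factorisations into irreducible morphisms in $\Tfrak$ is stated in purely categorical terms so that it is preserved under equivalence, which is precisely the content of \cite[Prop.~11.7]{BuanMarsh2021}.
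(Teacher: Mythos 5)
Your proposal is correct and follows essentially the same route as the paper: both use the equivalence $\Tfrak(A)\cong\Tfrak(B)$ of \cref{cor:equivcats} to transfer factorisations of morphisms into irreducibles, and then invoke the identification of such factorisations with signed $\tau$-exceptional sequences from \cite[Prop.~11.7]{BuanMarsh2021}. The paper's proof is just a terser version of your argument (bijection on $\Hom$-sets, hence on factorisations), so your extra care about irreducibility being a categorical notion is a harmless elaboration rather than a different method.
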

\begin{proof}
    The fully faithful functor of the equivalence of categories $\Tfrak(A) \cong \Tfrak(B)$ of \cref{cor:equivcats} induces a bijection between $\Hom$-sets of $\Tfrak(A)$ and $\Tfrak(B)$, which consequently gives a bijection between all factorisations of morphisms in the $\Hom$-sets and thus all signed $\tau$-exceptional sequences corresponding with those.
\end{proof}

\begin{rmk}
    Recently, a mutation for $\tau$-exceptional sequences was defined \cite{BHM2024}. This mutation generalises that of exceptional sequences of hereditary algebras \cite{cbw92, ringel_exceptional}. if $A$ and $B$ are two finite-dimensional algebras such that $\tors A \cong \tors B$ and both are finite, then there is a bijection between $\tau$-exceptional sequences of $\mods A$ and $\mods B$ by \cite[Thm. 8.10, Rmk. 8.11]{BarnardHanson2022exc}. However, the mutation of $\tau$-exceptional sequences heavily relies on signed $\tau$-exceptional sequences. Since \cref{cor:signedexcep} shows that these are in bijection as well, it seems plausible that the mutation of $\tau$-exceptional sequences may be determined by the lattice of torsion classes. This could provide further inside into the problem of transitivity of the mutation, the understanding of which is still limited to two special cases, see \cite{BHM2024, BKT2025}.
\end{rmk}

\section{Factor algebras and lattice congruences} \label{sec:quotients}
We now use the lattice-theoretic definition of the $\tau$-cluster morphism category to gain new insights into its structure and its properties. In particular, as illustrated in \cref{exmp:good} this approach appears to be the most natural one to study factor algebras of $A$ by an ideal $I$. An intuitive reason for this is that $\tors A \cap \mods A/I = \tors A/I$ but the same behaviour is generally not exhibited by $\tau$-rigid pairs or wide subcategories. More precisely, recall from \cref{thm:DIRRTquot} that an ideal $I \in \ideal A$ induces a surjective morphism of lattices $\tors A \twoheadrightarrow \tors A/I$ given by $\Tcal \mapsto \Tcal \cap \mods A/I$. The following result describes the lattice congruence $\Phi_I$ on $\tors A$ in terms of the brick labelling.

\begin{thm} \cite[Thm. 5.15]{DIRRT2017} \label{thm:brickcontract}
    Let $A$ be a finite-dimensional algebra and $I$ an ideal. 
    \begin{enumerate}
        \item An arrow $q \in \Hasse(\tors A)$ is not contracted by $\Phi_I$ if and only if its brick label $B_q$ is in $\mods A/I$. In this case, it has the same label in $\Hasse(\tors A)$ and $\Hasse(\tors A/I)$.
        \item Let $\Ucal \subseteq \Tcal \in \tors A$, then $\Ucal \equiv_{\Phi_I} \Tcal$ if and only if, $IB \neq 0$ for all bricks $B \in \Ucal^\perp \cap \Tcal$.
    \end{enumerate}
\end{thm}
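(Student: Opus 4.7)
My plan is to prove part (1) first, at the level of a single cover relation in $\Hasse(\tors A)$, and then obtain part (2) by assembling covers along the interval $[\Ucal, \Tcal]$. Throughout I would write $\overline{\Ccal} \coloneqq \Ccal \cap \mods A/I$ and use the fact from the brick labelling that a cover $\Ucal \lessdot \Tcal$ in $\tors A$ with label $B$ satisfies $\Ucal^\perp \cap \Tcal = \Filt_A(B)$.

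For part (1), I would fix such a cover $\Ucal \lessdot \Tcal$ with label $B$ and argue as follows. If $B \in \mods A/I$, then $B \in \overline{\Tcal}$, while $B \notin \Ucal$ because $B$ is a nonzero brick lying in $\Ucal^\perp$; thus $B \in \overline{\Tcal} \setminus \overline{\Ucal}$ and the arrow is not contracted. Conversely, suppose $B \notin \mods A/I$. For any $X \in \overline{\Tcal}$, the torsion pair $(\Ucal, \Ucal^\perp)$ would yield a short exact sequence $0 \to Y \to X \to Z \to 0$ with $Y \in \Ucal$ and $Z \in \Ucal^\perp$. Since $\Tcal$ is closed under quotients, $Z \in \Ucal^\perp \cap \Tcal = \Filt_A(B)$, and being a quotient of $X$ it satisfies $Z \in \mods A/I$. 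But any nonzero object of $\Filt_A(B)$ surjects onto $B$ (the top step of its $B$-filtration), so $Z \neq 0$ would force $B \in \mods A/I$, a contradiction. Hence $Z = 0$ and $X = Y \in \overline{\Ucal}$, giving $\overline{\Tcal} \subseteq \overline{\Ucal}$; combined with the reverse inclusion this yields $\overline{\Ucal} = \overline{\Tcal}$. For the same-label claim, when $B \in \mods A/I$ the cover $\overline{\Ucal} \lessdot \overline{\Tcal}$ in $\tors A/I$ has some label $B'$ with $\overline{\Ucal}^{\perp_{A/I}} \cap \overline{\Tcal} = \Filt_{A/I}(B')$. Full faithfulness of the inclusion $\mods A/I \hookrightarrow \mods A$ would give $\Hom_{A/I}(\overline{\Ucal}, B) = 0$, so $B \in \Filt_{A/I}(B')$, and since $B'$ is the unique simple object of this wide subcategory and $B$ is a brick, $B \cong B'$.

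For part (2), I would invoke \cref{prop:quotlattiso}: each $\Phi_I$-class is an interval, so $\Ucal \equiv_{\Phi_I} \Tcal$ is equivalent to the whole interval $[\Ucal, \Tcal]$ lying in a single class, which in turn is equivalent to every cover $\Ucal' \lessdot \Tcal'$ inside $[\Ucal, \Tcal]$ being contracted. By part (1) this is exactly the statement that every brick labelling a cover in $[\Ucal, \Tcal]$ fails to lie in $\mods A/I$, and since these brick labels are precisely the bricks of $\Ucal^\perp \cap \Tcal$ (as recorded in \cref{sec:definition}), the equivalence follows. The hard part will be the contracting direction of (1): the torsion-pair argument ruling out any nonzero contribution of $\Filt_A(B)$ inside $\overline{\Tcal}$ when $B \notin \mods A/I$ is the only step with real content, while everything else is a formal rearrangement of the brick labelling and the lattice-congruence structure.
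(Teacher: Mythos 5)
The paper itself gives no proof of this statement (it is quoted from \cite[Thm.~5.15]{DIRRT2017}), so your argument has to stand on its own. Part (1) essentially does: the direction ``$B\in\mods A/I\Rightarrow$ not contracted'' is correct, and the torsion-decomposition argument showing $\overline{\Tcal}\subseteq\overline{\Ucal}$ when $B\notin\mods A/I$ (a nonzero quotient in $\Filt_A(B)\cap\mods A/I$ would force $B\in\mods A/I$) is the right idea. Two points are used silently and deserve a word: that a non-contracted arrow of $\Hasse(\tors A)$ actually descends to an arrow of $\Hasse(\tors A/I)$, so that ``its label $B'$'' makes sense (this is standard for lattice congruences, e.g.\ via $[\,(\pi_\downarrow\Xcal)\vee\Ucal\,]=[\Xcal]\vee[\Ucal]$, but it is not nothing); and that the only brick in $\Filt_{A/I}(B')$ is $B'$ -- your ``unique simple object'' remark needs the short endomorphism argument (a brick of filtration length $\geq 2$ would admit a non-invertible nonzero endomorphism) or a citation to the uniqueness statement in the brick-labelling theorem.

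The genuine gap is the backward direction of (2). Your reduction assumes that if every cover inside $[\Ucal,\Tcal]$ is contracted by $\Phi_I$, then $\Ucal\equiv_{\Phi_I}\Tcal$. The statement is for an arbitrary finite-dimensional algebra, where $\tors A$ may be infinite and $[\Ucal,\Tcal]$ need not be spanned by chains of covers; \cref{prop:quotlattiso} alone does not give this implication, and it is false for general complete lattices with complete congruences whose classes are intervals (the trivial congruence on the complete lattice $[0,1]\subseteq\R$ contracts every cover vacuously, since there are none, yet $0\not\equiv 1$). For $\tors A$ the implication is true, but proving it requires exactly the module-theoretic input you call a ``formal rearrangement''. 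The clean repair is to bypass covers and rerun your part (1) argument on the whole interval: given $X\in\Tcal\cap\mods A/I$, its torsion-free quotient with respect to $(\Ucal,\Ucal^{\perp})$ lies in $\Ucal^{\perp}\cap\Tcal\cap\mods A/I$; since $\Ucal^{\perp}\cap\Tcal=\Filt_A(\brick[\Ucal,\Tcal])$ (the fact you already quote), a nonzero such quotient surjects onto a brick of $\Ucal^{\perp}\cap\Tcal$ annihilated by $I$, contradicting the hypothesis, so $X\in\Ucal$ and $\overline{\Ucal}=\overline{\Tcal}$. Note also that the forward direction of (2) needs no covers at all: a brick $B\in\Ucal^{\perp}\cap\Tcal$ with $IB=0$ would lie in $\overline{\Tcal}=\overline{\Ucal}\subseteq\Ucal$, contradicting $B\in\Ucal^{\perp}$, $B\neq 0$. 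As written, your proof of (2) is only complete in the $\tau$-tilting finite case, where a maximal chain of covers from $\Ucal$ to $\Tcal$ exists.
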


To understand the behaviour of intervals under the surjective lattice morphism $\tors A \twoheadrightarrow \tors A/I$ given by intersecting with $\mods A/I$ we need the following two immediate results. Throughout the following, fix an ideal $I \in \ideal A$.

\begin{lem} \label{lem:perpinquot}
    Let $\Ucal \in \tors A$. We have $(\Ucal \cap \mods A/I)^{\perp_{A/I}} = \Ucal^{\perp_A} \cap \mods A/I$.
\end{lem}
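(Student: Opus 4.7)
The plan is to prove the two inclusions separately, one of which is immediate from the monotonicity of perpendicular categories and the other relies on the fact that torsion classes are closed under taking factor modules.

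For the inclusion $\Ucal^{\perp_A} \cap \mods A/I \subseteq (\Ucal \cap \mods A/I)^{\perp_{A/I}}$, take $X \in \Ucal^{\perp_A} \cap \mods A/I$. Then $\Hom_A(U,X) = 0$ for every $U \in \Ucal$, so in particular for every $U \in \Ucal \cap \mods A/I$. Since both $U$ and $X$ are annihilated by $I$, we have $\Hom_A(U,X) = \Hom_{A/I}(U,X)$, so $X \in (\Ucal \cap \mods A/I)^{\perp_{A/I}}$.

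For the reverse inclusion, suppose $X \in (\Ucal \cap \mods A/I)^{\perp_{A/I}}$ and let $U \in \Ucal$ be arbitrary. The key observation is that any $A$-module homomorphism $f \colon U \to X$ must vanish on $UI$, because $X$ is annihilated by $I$, and hence factors through the quotient $U/UI$. Now $U/UI$ is an $A/I$-module and, being a factor module of $U \in \Ucal$, it lies in $\Ucal$ by closure of torsion classes under quotients. Therefore $U/UI \in \Ucal \cap \mods A/I$, and by hypothesis $\Hom_{A/I}(U/UI, X) = 0$, forcing $f = 0$. This shows $X \in \Ucal^{\perp_A} \cap \mods A/I$.

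I do not anticipate any serious obstacle here; the statement is essentially a compatibility check between the two ambient module categories, and the only mildly non-trivial ingredient is the factorisation argument through $U/UI$, which is standard. No finiteness or functorial-finiteness assumptions on $\Ucal$ are required.
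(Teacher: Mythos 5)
Your proof is correct and takes essentially the same approach as the paper: the nontrivial inclusion is established by factoring a putative nonzero map $f\colon U \to X$ through a quotient of $U$ lying in $\Ucal \cap \mods A/I$ and invoking closure of torsion classes under factor modules. The only cosmetic difference is that you factor through $U/UI$, whereas the paper factors through $\image f$ (a submodule of $X$, hence in $\mods A/I$); both variants are equally valid.
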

\begin{proof}
    To show the inclusion $\supseteq$, take $X \in \Ucal^{\perp_{A}} \cap \mods A/I$. Assume for a contradiction, there exists $Y \in \Ucal \cap \mods A/I$ such that $\Hom_{A/I}(Y,X) \neq 0$. Since $Y \in \Ucal$ it follows immediately that $X \not \in \Ucal^{\perp_A}$, a contradiction. 
    
    To show $\subseteq$, take $X \in ( \Ucal \cap \mods A/I)^{\perp_{A/I}} \subseteq \mods A/I$. We need to show that $X \in \Ucal^{\perp_A}$. Assume for a contradiction that there exists $0 \neq f \in \Hom_A(Y,X)$ with $Y \in \Ucal$. By the assumption on $X$, it follows that $Y \not \in \mods A/I$. However, $\image f$ is a submodule of $X$, and thus lies in $\mods A/I$. Moreover, since $\Ucal$ is a torsion class and thus closed under quotients, we have $\image f \in \Ucal \cap \mods A/I$.  Hence there should not be a non-zero morphism $\image f \to X$. Since $f \neq 0$ implies $\image f \neq 0$, this is a contradiction and we have shown equality.
\end{proof}

\begin{cor} \label{cor:wideinquot}
    Let $[\Ucal, \Tcal] \subseteq \tors A$. We have $\overline{\Ucal}^{\perp_{A/I}} \cap \overline{\Tcal} = \Ucal^{\perp_A} \cap \Tcal \cap \mods A/I$.
\end{cor}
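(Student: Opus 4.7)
The plan is to derive this as an essentially immediate consequence of \cref{lem:perpinquot}. By definition $\overline{\Ucal} = \Ucal \cap \mods A/I$ and $\overline{\Tcal} = \Tcal \cap \mods A/I$, so the left-hand side becomes $(\Ucal \cap \mods A/I)^{\perp_{A/I}} \cap (\Tcal \cap \mods A/I)$.

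Applying \cref{lem:perpinquot} to the first factor rewrites this as $(\Ucal^{\perp_A} \cap \mods A/I) \cap (\Tcal \cap \mods A/I)$. Since intersecting twice with $\mods A/I$ is the same as intersecting once, this simplifies directly to $\Ucal^{\perp_A} \cap \Tcal \cap \mods A/I$, which is exactly the right-hand side.

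There is no real obstacle here: the entire content of the corollary is packaged inside \cref{lem:perpinquot}, and only the trivial observation that $\overline{\Tcal}$ already lies in $\mods A/I$ is needed to conclude. The proof will therefore be a one-line chain of equalities rather than a separate argument.
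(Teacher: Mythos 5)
Your proposal is correct and matches the paper's own proof: both rewrite $\overline{\Ucal}$ and $\overline{\Tcal}$ as intersections with $\mods A/I$, apply \cref{lem:perpinquot} to the torsion-free part, and absorb the repeated intersection with $\mods A/I$. Nothing further is needed.
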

\begin{proof}
    Clearly $\overline{\Ucal}^{\perp_{A/I}} \cap \overline{\Tcal} = (\Ucal \cap \mods A/I)^{\perp_A} \cap (\Tcal \cap \mods A/I) = \Ucal^{\perp_A} \cap \Tcal \cap \mods A/I$ by \cref{lem:perpinquot}.
\end{proof}

The following is a simple observation but since wide subcategories play a central role in this paper we include it for convenience. 
\begin{lem}\label{lem:widetowide}
    Let $\Wcal$ be a wide subcategory of $\mods A$, then $\Wcal \cap \mods A/I$ is a wide subcategory of $\mods A/I$.
\end{lem}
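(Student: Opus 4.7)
The plan is to verify the three defining closure properties of a wide subcategory (kernels, cokernels, extensions) directly, leveraging the fact that $\mods A/I$ sits inside $\mods A$ as an abelian subcategory that is closed under submodules and quotient modules.

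First I would observe that a module $M \in \mods A$ lies in $\mods A/I$ precisely when $MI = 0$, and this condition is obviously inherited by submodules and by quotient modules. Consequently, if $f\colon M \to N$ is a morphism in $\Wcal \cap \mods A/I$, then the kernel of $f$ computed in $\mods A$ is a submodule of $M$, hence still annihilated by $I$, and the cokernel of $f$ computed in $\mods A$ is a quotient of $N$, hence also annihilated by $I$. In particular, the kernel and cokernel of $f$ formed inside the abelian category $\mods A/I$ coincide with those formed inside $\mods A$. Since $\Wcal$ is wide, both of these modules belong to $\Wcal$, and by the discussion above they also belong to $\mods A/I$, which gives closure under kernels and cokernels.

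For the extension closure, I would use that any short exact sequence
\[ 0 \to L \to E \to N \to 0 \]
in $\mods A/I$ is automatically a short exact sequence in $\mods A$, since the inclusion $\mods A/I \hookrightarrow \mods A$ is exact. Thus if $L, N \in \Wcal \cap \mods A/I$, then wideness of $\Wcal$ forces $E \in \Wcal$, while $E \in \mods A/I$ holds by hypothesis on the extension living in $\mods A/I$. This establishes closure under extensions.

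Combining these three closure properties shows $\Wcal \cap \mods A/I$ is a wide subcategory of $\mods A/I$. I do not anticipate any genuine obstacle; the only small point to keep in mind is to distinguish \emph{short exact sequences in $\mods A$} from \emph{short exact sequences in $\mods A/I$}, and to note that the former restricts to the latter when all three terms lie in $\mods A/I$, so no issue arises from the fact that $\mods A/I$ is typically not extension-closed inside $\mods A$ itself.
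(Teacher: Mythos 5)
Your proof is correct and follows essentially the same route as the paper: both arguments check closure under kernels, cokernels and extensions directly, using that $\mods A/I$ is a full abelian subcategory of $\mods A$ closed under submodules and quotients (so kernels and cokernels agree in both categories) and that a short exact sequence in $\mods A/I$ is one in $\mods A$. Your additional remark on the annihilation condition $MI=0$ and the non-extension-closedness of $\mods A/I$ in $\mods A$ is accurate but not needed beyond what the paper states.
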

\begin{proof}
    Let $L,M,N \in \mods A/I$ lie in a short exact sequence $0 \to L \to M \to N \to 0$ with $L, N \in \Wcal \cap \mods A/I \subseteq \mods A$. Since $L,N \in \Wcal \subseteq \mods A$, we get $M \in \Wcal$ since $\Wcal$ is wide. Hence $M \in \Wcal \cap \mods A/I$ as required. Similarly, let $f: M \to N$ be a morphism with $M,N \in \Wcal \cap \mods A/I$, then since $M,N \in \Wcal$, we have $\ker f, \coker f \in \Wcal$. Moreover $\ker f\in \mods A/I$ and $\coker f \in \mods A/I$ since $\mods A/I$ is a full abelian subcategory of $\mods A$.
\end{proof}

\begin{cor}\label{cor:wideinttowideint}
    Let $[\Ucal, \Tcal]$ be a wide interval of $\tors A$, then $\overline{[\Ucal, \Tcal]}$ is a wide interval of $\tors A/I$.
\end{cor}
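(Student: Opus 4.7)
The plan is to assemble the statement directly from \cref{cor:wideinquot} and \cref{lem:widetowide}, together with the fact (\cref{thm:DIRRTquot}) that $\overline{(-)}$ is a morphism of complete lattices $\tors A \twoheadrightarrow \tors A/I$, so that $\overline{\Ucal}, \overline{\Tcal} \in \tors A/I$ and $\overline{\Ucal} \subseteq \overline{\Tcal}$. In particular, $\overline{[\Ucal, \Tcal]}$ really is an interval $[\overline{\Ucal}, \overline{\Tcal}]$ of $\tors A/I$.

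First I would recall the characterisation of wide intervals we are using: an interval $[\Ucal, \Tcal] \subseteq \tors A$ is wide exactly when the subcategory $\Ucal^{\perp_A} \cap \Tcal$ is a wide subcategory of $\mods A$ (this is the definition implicit in the reference to \cite{AsaiPfeifer2019} appearing earlier in the section). Applying \cref{lem:widetowide} to $\Wcal \coloneqq \Ucal^{\perp_A} \cap \Tcal$ yields that $\Wcal \cap \mods A/I$ is a wide subcategory of $\mods A/I$. Then \cref{cor:wideinquot} rewrites this intersection as
\[ \Ucal^{\perp_A} \cap \Tcal \cap \mods A/I = \overline{\Ucal}^{\perp_{A/I}} \cap \overline{\Tcal}, \]
so the latter is wide in $\mods A/I$, which is precisely the statement that $[\overline{\Ucal}, \overline{\Tcal}]$ is a wide interval of $\tors A/I$.

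I do not anticipate a real obstacle here: the content has been isolated into the two preceding results, and the corollary is essentially a one-line concatenation of them, with only the trivial verification that $\overline{\Ucal} \subseteq \overline{\Tcal}$ and both lie in $\tors A/I$ (automatic from \cref{thm:DIRRTquot}). If anything, the only point to double-check is that the definition of wide interval adopted here is the one via $\Ucal^\perp \cap \Tcal$ being wide rather than one of the equivalent combinatorial characterisations mentioned in the text (polytopes, binuclear intervals); the argument above uses the former, and the equivalence with the latter is cited from \cite{AsaiPfeifer2019, DIRRT2017} earlier.
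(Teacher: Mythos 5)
Your proposal is correct and is essentially identical to the paper's proof: both apply \cref{cor:wideinquot} to identify $\overline{\Ucal}^{\perp_{A/I}} \cap \overline{\Tcal}$ with $\Ucal^{\perp_A} \cap \Tcal \cap \mods A/I$ and \cref{lem:widetowide} to see that this is wide in $\mods A/I$, differing only in the order in which the two lemmas are invoked.
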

\begin{proof}
    By assumption $\Ucal^{\perp_A} \cap \Tcal$ is a wide subcategory of $\mods A$, then by \cref{cor:wideinquot} we have $\overline{\Ucal}^{\perp_{A/I}} \cap \overline{\Tcal} = \Ucal^{\perp_A} \cap \Tcal \cap \mods A/I$ which is a wide subcategory of $\mods A/I$ by \cref{lem:widetowide}.
\end{proof}

The following is the starting point for connecting $\Tfrak(A)$ and $\Tfrak(A/I)$.

\begin{prop} \label{lem:tauperptotauperp}
    If $[\Ucal, \Tcal]$ is a $\tau$-perpendicular interval of $\tors A$, then $\overline{[\Ucal, \Tcal]}$ is a $\tau$-perpendicular interval of $\tors A/I$.
\end{prop}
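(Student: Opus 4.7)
The plan is to upgrade the wide-interval version of the statement, \cref{cor:wideinttowideint}, to a $\tau$-perpendicular one by invoking Jasso's $\tau$-tilting reduction. \cref{cor:wideinttowideint} already gives that $\overline{[\Ucal,\Tcal]}$ is a wide interval of $\tors A/I$, so the whole content of the proposition is promoting ``wide'' to ``$\tau$-perpendicular''.

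First I would fix a $\tau$-rigid pair $(M,P)$ with $[\Ucal, \Tcal] = [\Fac M, \, {}^\perp \tau M \cap P^\perp]$ and invoke Jasso's theorem (recalled in \cref{sec:background}) to identify the wide subcategory $\Wcal_{(M,P)} = \Ucal^{\perp_A} \cap \Tcal$ with $\mods C$, where $C$ is the $\tau$-tilting reduction of $A$ at $(M,P)$. By \cref{cor:wideinquot} we know $\overline{\Ucal}^{\perp_{A/I}} \cap \overline{\Tcal} = \overline{\Wcal}_{(M,P)}$, and by \cref{lem:widetowide} this is a wide subcategory of $\mods A/I$. The crux of the argument is then to show that, under the equivalence $\Wcal_{(M,P)} \cong \mods C$, the subcategory $\overline{\Wcal}_{(M,P)}$ corresponds to $\mods(C/J)$ for some ideal $J \in \ideal C$ naturally induced by $I$. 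The subcategory $\overline{\Wcal}_{(M,P)}$ is defined by a vanishing condition on modules in $\mods A$ (annihilation by $I$), and transporting this along the equivalence yields a full Serre subcategory of $\mods C$ defined by annihilation by an analogous ideal.

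Granted this, the lattice isomorphism $[\Ucal, \Tcal] \cong \tors \Wcal_{(M,P)} \cong \tors C$ of \cref{thm:wideintiso} restricts to an isomorphism $\overline{[\Ucal, \Tcal]} \cong \tors(C/J)$ via \cref{thm:DIRRTquot} applied inside $\mods C$. The maximal element $\mods(C/J) \in \tors(C/J)$ therefore corresponds to a $\tau_C$-tilting pair $(N, Q)$ in $\mods C$ realising it as a functorially-finite torsion class. Lifting $(N,Q)$ along the equivalence $\mods C \cong \Wcal_{(M,P)}$ and then along Jasso's explicit bijection $E_{(M,P)}^{-1}$ (as used in the proof of \cref{lem:composition1}) produces a $\tau$-rigid pair in $\mods A$ extending $(M,P)$; reducing the latter modulo $I$ in the natural way (for instance by taking largest quotients in $\mods A/I$ for the module part and the corresponding projective covers of simples of $A/I$ for the projective part) yields a candidate $\tau_{A/I}$-rigid pair $(M', P')$. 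I would then verify $[\Fac_{A/I} M', \, {}^{\perp_{A/I}} \tau_{A/I} M' \cap P'^{\perp_{A/I}}] = [\overline{\Ucal}, \overline{\Tcal}]$ by checking equality of brick labels on both sides, which is controlled by \cref{thm:brickcontract} and \cref{prop:brickpreserve}.

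The main obstacle is the compatibility claim $\overline{\Wcal}_{(M,P)} \cong \mods(C/J)$, which demands a concrete interaction between the equivalence underlying $\tau$-tilting reduction and the quotient functor $\mods A \to \mods A/I$. If the direct approach proves too technical, a purely lattice-theoretic alternative is to restrict the surjective morphism of complete lattices $\overline{(-)}: \tors A \twoheadrightarrow \tors A/I$ from \cref{thm:DIRRTquot} to the sublattice $[\Ucal, \Tcal]$: the image is a quotient of $\tors C$ by a complete lattice congruence that, by appeal to \cref{thm:DIRRTquot} inside $\mods C$, arises from an ideal $J \in \ideal C$. The corresponding $\tau_{C/J}$-tilting data then transports to a $\tau_{A/I}$-rigid pair in $\mods A/I$ realising $\overline{[\Ucal, \Tcal]}$ as a $\tau$-perpendicular interval, and the brick-label bookkeeping is the same as before.
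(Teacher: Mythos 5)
There is a genuine gap. Your reduction is right that the whole content is promoting ``wide'' (which \cref{cor:wideinttowideint} gives) to ``$\tau$-perpendicular'', and your observation that $\Wcal_{(M,P)} \cap \mods A/I$ corresponds under $\Wcal_{(M,P)} \cong \mods C$ to a subcategory of the form $\mods (C/J)$ is correct (it is closed under subobjects, quotients and finite direct sums in $\mods C$, though it is not a Serre subcategory: it need not be extension-closed). But the route from there to an actual $\tau_{A/I}$-rigid pair realising $[\overline{\Ucal},\overline{\Tcal}]$ does not go through. First, the top element of $\tors (C/J)$ is realised by a $\tau_{C/J}$-tilting pair, not by a ``$\tau_C$-tilting pair $(N,Q)$ in $\mods C$'': as a subcategory of $\mods C$ it is in general not even a torsion class, so it cannot be fed into the bijection $E_{(M,P)}^{-1}$ used in \cref{lem:composition1}. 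Second, ``reducing modulo $I$'' a $\tau_A$-rigid pair by taking largest quotients in $\mods A/I$ does not in general produce a $\tau_{A/I}$-rigid pair, and even granting some candidate $(M',P')$ with $\Fac M' = \overline{\Ucal}$, the essential equality $\overline{\Tcal} = {}^{\perp}\tau_{A/I} M' \cap (P')^{\perp}$ is exactly the hard point and is deferred to unspecified brick-label bookkeeping. Third, the fallback route is circular: identifying the restriction of $\Phi_I$ to $[\Ucal,\Tcal]$ with a congruence ``arising from an ideal of $C$'' is asserted rather than proved (arbitrary lattice congruences on $\tors C$ need not come from ideals, so the specific identification with $\Phi_J$ requires an argument), and transporting $\tau_{C/J}$-tilting data through $\overline{\Wcal} \subseteq \mods A/I$ to an interval of $\tors A/I$ is available precisely when $\overline{\Wcal}$ is $\tau$-perpendicular in $\mods A/I$ --- which is equivalent to the statement being proved. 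Note also that the proposition carries no $\tau$-tilting finiteness hypothesis, so intermediate functorial-finiteness claims (e.g.\ for $\Trm_C(\mods(C/J))$) cannot be taken for granted.

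The paper avoids all of this by never constructing a $\tau$-rigid pair over $A/I$: it uses the purely combinatorial characterisation of $\tau$-perpendicular intervals as join-intervals over functorially finite bases (\cref{prop:DIRRT420}). Since $\Phi_I$ is a complete lattice congruence (\cref{thm:DIRRTquot}), $\pi_{\downarrow}^{\Phi_I}(\tors A)$ is a join-subsemilattice, so writing $\Tcal = \Ucal_1 \lor \dots \lor \Ucal_s$ with the $\Ucal_i$ covering $\Ucal$ gives $\overline{\Tcal} = \overline{\Ucal}_1 \lor \dots \lor \overline{\Ucal}_s$, and since $\overline{\Ucal} \in \ftors A/I$ by \cite[Prop.\ 5.6b]{DIRRT2017}, \cref{prop:DIRRT420} applied in $\mods A/I$ yields that $[\overline{\Ucal},\overline{\Tcal}]$ is $\tau$-perpendicular. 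If you want to salvage your approach, the honest core of it is precisely this join-interval argument; the detour through $\mods C$ and lifted/reduced $\tau$-rigid pairs adds the unproven compatibilities listed above without removing the need for it.
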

\begin{proof}
    Let $[\Ucal, \Tcal]$ be a $\tau$-perpendicular interval, in particular, it is wide. By \cref{cor:wideinttowideint} we have that $\overline{[\Ucal, \Tcal]}$ is a wide interval of $\tors A/I$. By \cite[Thm. 1.3]{DIRRT2017} and \cref{exmp:completelattice} the lattice $\tors A$ is a complete lattice and by \cref{thm:DIRRTquot} the lattice congruence $\Phi_I$ is a complete lattice congruence. By \cref{prop:quotlattiso}, we have $\pi_{\downarrow}^{\Phi_I} (\tors A) \cong \tors / \Phi_I \cong \tors A/I$. By the same result, congruence classes of $\Phi_I$ are intervals and in this case \cite[9.47, Prop. 9-5.8]{Reading2016} implies that $\pi_{\downarrow}^{\Phi_I} (\tors A)$ is a join-subsemilattice of $\tors A$.\\
    
    By \cite[Rmk. 9-5.9]{Reading2016} this means that $\pi_{\downarrow}^{\Phi_I} (x \lor y) = \pi_{\downarrow}^{\Phi_I} x \lor \pi_{\downarrow}^{\Phi_I} y$ holds. Since $[\Ucal, \Tcal]$ is $\tau$-perpendicular, by \cref{prop:DIRRT420} there are $s$ arrows $\{\Ucal_i \to \Ucal\}_{i=1}^s$ in $\Hasse([\Ucal, \Tcal])$ such that $\Tcal = \Ucal_1 \lor \dots \lor \Ucal_s$. From the join-subsemilattice property, we obtain 
    \begin{align*}
        \overline{\Tcal} &\cong \pi_{\downarrow}^{\Phi_I}(\Tcal) = \pi_{\downarrow}^{\Phi_I}(\Ucal_1 \lor \dots \lor \Ucal_s)  = \pi_{\downarrow}^{\Phi_I} \Ucal_1 \lor \dots \lor \pi_{\downarrow}^{\Phi_I} \Ucal_r  \cong \overline{\Ucal}_1 \lor \dots \lor \overline{\Ucal}_r.
    \end{align*} 
    Since $\overline{\Ucal} \in \ftors A/I$ by \cite[Prop. 5.6b]{DIRRT2017}, \cref{prop:DIRRT420} implies that the interval $[\overline{\Ucal}, \overline{\Tcal}]$ is $\tau$-perpendicular as required.
\end{proof}

This implies that we may extend the lattice isomorphisms between the $\tau$-perpendicular intervals of \cref{thm:wideintiso} to their quotients in a natural way.

\begin{prop} \label{wideintisoquot} We have the following commutative diagram extending \cref{thm:wideintiso}.
    \[ \begin{tikzcd}[column sep=5]
        {[\Ucal_{(M_1,P_1)}, \Tcal_{(M_1,P_1)}]} \arrow[rd,"{(-) \cap \Wcal_{(M_1,P_1)}}",swap] \arrow[rr, "\cong"]\arrow[dd, crossing over, "(-) \cap \mods A/I", swap]& & {[\Ucal_{(M_2,P_2)}, \Tcal_{(M_2,P_2})]} \arrow[ld, "{(-) \cap \Wcal_{(M_2,P_2)}}"] \arrow[dd, crossing over, "(-) \cap \mods A/I"]\\ 
        & \tors \Wcal_{(M_1, P_1)} = \tors \Wcal_{(M_2,P_2)} \arrow[dd, crossing over, "(-) \cap \mods A/I" pos=0.25] \\
        {\overline{[\Ucal_{(M_1,P_1)}, \Tcal_{(M_1,P_1)}]}} \arrow[rd,"{(-) \cap (\Wcal_{(M_1,P_1)} \cap \mods A/I)}",swap] \arrow[rr, "\cong" pos=0.3]& & {\overline{[\Ucal_{(M_2,P_2)}, \Tcal_{(M_2,P_2)}]}} \arrow[ld, "{(-) \cap (\Wcal_{(M_2,P_2)} \cap \mods A/I)}"]\\ 
        & \tors (\Wcal_{(M_i, P_i)} \cap \mods A/I)
    \end{tikzcd}
    \]
    The top and bottom parts are lattice isomorphisms coming from \cref{thm:wideintiso}, and the downward-facing arrows are given by intersecting with $\mods A/I$.
\end{prop}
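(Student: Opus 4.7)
The plan is to construct the bottom face of the diagram by applying Theorem \ref{thm:wideintiso} within $\tors A/I$, and then to check commutativity of each subdiagram by direct calculation using the lattice-morphism property of $(-) \cap \mods A/I$ supplied by Theorem \ref{thm:DIRRTquot}.

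First I would establish the bottom horizontal lattice isomorphism together with the bottom diamond. By Proposition \ref{lem:tauperptotauperp}, both $\overline{[\Ucal_{(M_i,P_i)}, \Tcal_{(M_i,P_i)}]}$ are $\tau$-perpendicular intervals of $\tors A/I$. Writing $\Wcal := \Wcal_{(M_1,P_1)} = \Wcal_{(M_2,P_2)}$ and invoking Corollary \ref{cor:wideinquot}, the wide subcategory associated to each quotient interval is precisely $\Wcal \cap \mods A/I$, so the two bottom intervals share the same associated wide subcategory. Applying Theorem \ref{thm:wideintiso} inside $\tors A/I$ then yields the bottom horizontal isomorphism together with the bottom diamond factoring through $\tors(\Wcal \cap \mods A/I)$. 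The outer downward vertical arrows restrict from Theorem \ref{thm:DIRRTquot}; their surjectivity follows from the standard trick that if $\Scal \in [\overline{\Ucal_{(M_i,P_i)}}, \overline{\Tcal_{(M_i,P_i)}}]$ has any lift $\Vcal \in \tors A$, then $(\Vcal \vee \Ucal_{(M_i,P_i)}) \wedge \Tcal_{(M_i,P_i)}$ lies in the top interval and maps to $\Scal$. The central vertical arrow arises analogously from Theorem \ref{thm:DIRRTquot} applied to the algebra $C \cong \Wcal$ modulo the ideal whose factor algebra gives $\Wcal \cap \mods A/I$.

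With all arrows in place, commutativity of each subdiagram is routine. The top and bottom diamonds commute by the very construction in Theorem \ref{thm:wideintiso}. The two outer rectangles, each joining $[\Ucal_{(M_i,P_i)}, \Tcal_{(M_i,P_i)}]$ to $\tors(\Wcal \cap \mods A/I)$ via either $\tors \Wcal$ or $\overline{[\Ucal_{(M_i,P_i)}, \Tcal_{(M_i,P_i)}]}$, commute because both compositions send $\Vcal$ to $\Vcal \cap \Wcal \cap \mods A/I$ by associativity of intersection of subcategories. For the central square relating the two horizontal isomorphisms via the intermediate vertical $\tors \Wcal \to \tors(\Wcal \cap \mods A/I)$, recall that the top isomorphism characterizes $\Vcal_1 \mapsto \Vcal_2$ by the equality $\Vcal_1 \cap \Wcal = \Vcal_2 \cap \Wcal$; intersecting both sides with $\mods A/I$ yields $\overline{\Vcal_1} \cap (\Wcal \cap \mods A/I) = \overline{\Vcal_2} \cap (\Wcal \cap \mods A/I)$, which is precisely the condition that $\overline{\Vcal_1}$ and $\overline{\Vcal_2}$ correspond under the bottom isomorphism.

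The main obstacle is essentially bookkeeping rather than computation: the trickiest point is identifying cleanly the ideal of $C$ whose factor algebra realises $\Wcal \cap \mods A/I$ in order to legitimately invoke Theorem \ref{thm:DIRRTquot} for the central vertical arrow, since $\Wcal$ appears as an abstract module category via $\tau$-tilting reduction. Once this identification is recorded, no input beyond Proposition \ref{lem:tauperptotauperp}, Corollary \ref{cor:wideinquot}, Theorem \ref{thm:wideintiso}, and Theorem \ref{thm:DIRRTquot} is required, and the verification of the six faces proceeds by tracking elements through successive intersections.
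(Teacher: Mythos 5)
Your proposal follows essentially the same route as the paper for the main content: the bottom face is obtained exactly as in the paper by combining \cref{lem:tauperptotauperp} (the quotient intervals are $\tau$-perpendicular), \cref{cor:wideinquot} (both correspond to the wide subcategory $\Wcal \cap \mods A/I$), and then \cref{thm:wideintiso} applied in $\tors A/I$; the commutativity checks, including your observation that the characterisation $\Vcal_1 \cap \Wcal = \Vcal_2 \cap \Wcal$ passes to $\overline{\Vcal}_1 \cap (\Wcal \cap \mods A/I) = \overline{\Vcal}_2 \cap (\Wcal \cap \mods A/I)$ after intersecting with $\mods A/I$, are exactly what the paper dismisses as ``obvious from the description of the maps.'' (Your added surjectivity remark for the outer vertical arrows is correct but not needed for the statement.)

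The one place you diverge is the central vertical arrow $\tors \Wcal \to \tors(\Wcal \cap \mods A/I)$, and there your proposal has an unresolved step that is also an unnecessary one. You propose to realise $\Wcal \cap \mods A/I$ as $\mods C/J$ for an ideal $J$ of the $\tau$-tilting reduction $C$ and then invoke \cref{thm:DIRRTquot}, and you yourself flag that identifying $J$ is the ``trickiest point'' without carrying it out. The paper avoids this entirely: the statement only requires the middle arrow to be well defined, and for that it suffices to observe directly that if $\Tcal \in \tors \Wcal$ then $\Tcal \cap \mods A/I$ is closed under quotients and extensions inside the wide subcategory $\Wcal \cap \mods A/I$ of $\mods A/I$ (using \cref{lem:widetowide}), since kernels, cokernels and short exact sequences in $\Wcal \cap \mods A/I$ are computed as in $\mods A$. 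Your detour can in fact be repaired --- $\Wcal \cap \mods A/I$ is closed under subobjects, quotients and finite direct sums in $\Wcal \simeq \mods C$, and any such subcategory equals $\mods C/J$ with $J$ the common annihilator --- but this extra argument buys nothing here, and as written the appeal to \cref{thm:DIRRTquot} for the middle arrow rests on an identification you have not established. Replace it with the direct one-line verification and the proof is complete and matches the paper's.
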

\begin{proof}
Let $\Wcal \coloneqq \Wcal_{(M_1,P_1)} = \Wcal_{(M_2,P_2)}$. By \cref{lem:widetowide}, $\Wcal \cap \mods A/I$ is a wide subcategory of $\mods A/I$. For $\Tcal \in \tors \Wcal$ it follows trivially that $\Tcal \cap \mods A/I$ is a torsion class of the wide subcategory $\Wcal \cap \mods A/I$ of $\mods A/I$. By \cref{lem:tauperptotauperp} the intervals 
\[ {\overline{[\Ucal_{(M_1,P_1)}, \Tcal_{(M_1,P_1)}]}} \text{ and } {\overline{[\Ucal_{(M_2,P_2)}, \Tcal_{(M_2,P_2)}]}}\]
are $\tau$-perpendicular intervals of $\tors A/I$ and correspond to the wide subcategory $\Wcal \cap \mods A/I = \Wcal \cap \mods A/I$ by \cref{cor:wideinquot}. Thus \cref{thm:wideintiso} implies the existence of the lattice isomorphisms in the bottom half of the diagram and the downward-facing arrows are well-defined. The commutativity of the squares is obvious from the description of the maps. 
\end{proof}

We are now able to establish a functor between the categories.

\begin{thm} \label{thm:inducedfunctor}
    Let $I \in \ideal A$. There exists a functor $F_I: \Tfrak(A) \to \Tfrak(A/I)$ induced by $\overline{(-)}_I$, that is, $F_I$ is given on objects by $[\Ucal, \Tcal]_\sim \mapsto \overline{[\Ucal, \Tcal]}_\sim$ and on morphisms by $[f_{[\Vcal, \Scal][\Ucal, \Tcal]}] \mapsto [f_{\overline{[\Vcal, \Scal]}\overline{[\Ucal, \Tcal]}}]$.
\end{thm}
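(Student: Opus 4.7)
The plan is to verify the four standard conditions making $F_I$ a well-defined functor: well-definedness on objects, well-definedness on morphisms, and preservation of identities and of composition. The preparation has already been done in \cref{lem:tauperptotauperp}, \cref{cor:wideinquot} and \cref{wideintisoquot}, so what remains is to assemble these pieces.

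For objects, given a representative $\tau$-perpendicular interval $[\Ucal, \Tcal]$ of the class $[\Ucal, \Tcal]_\sim$, the image $\overline{[\Ucal, \Tcal]}$ is again $\tau$-perpendicular by \cref{lem:tauperptotauperp}, hence a legitimate object of $\Tfrak(A/I)$. Independence of the chosen representative follows from \cref{cor:wideinquot}: if $[\Ucal_1, \Tcal_1] \sim [\Ucal_2, \Tcal_2]$ via the common wide subcategory $\Wcal := \Wcal_{(M_1,P_1)} = \Wcal_{(M_2,P_2)}$, then the wide subcategory associated to each $\overline{[\Ucal_i, \Tcal_i]}$ equals $\Wcal \cap \mods A/I$, forcing the two images to lie in the same $\sim$-class of $\Tfrak(A/I)$.

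For morphisms, a representative $f_{[\Vcal, \Scal][\Ucal, \Tcal]}$ arises from a relation $[\Vcal, \Scal] \leq [\Ucal, \Tcal]$ in $\itv(\tors A)$, i.e.\ $\Vcal \subseteq \Ucal$ and $\Tcal \subseteq \Scal$. Intersection with $\mods A/I$ preserves these containments, so the pair $(\overline{[\Vcal, \Scal]}, \overline{[\Ucal, \Tcal]})$ defines a morphism in $\itv(\tors A/I)$. To see that the defining equivalence on morphisms is preserved, suppose $f_{[\Vcal_1, \Scal_1][\Ucal_1, \Tcal_1]} \sim f_{[\Vcal_2, \Scal_2][\Ucal_2, \Tcal_2]}$, i.e.\ $[\Ucal_1, \Tcal_1] \cap \Wcal_{(M_1, P_1)} = [\Ucal_2, \Tcal_2] \cap \Wcal_{(M_2, P_2)}$ inside $\tors \Wcal$. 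Intersecting both sides with $\mods A/I$ and chasing through the commutative diagram of \cref{wideintisoquot} (which couples the passage to the quotient with the identification of $\sim$-equivalent intervals via their common wide subcategory) yields the required equality $\overline{[\Ucal_1, \Tcal_1]} \cap (\Wcal_{(M_1, P_1)} \cap \mods A/I) = \overline{[\Ucal_2, \Tcal_2]} \cap (\Wcal_{(M_2, P_2)} \cap \mods A/I)$, which is exactly the defining condition for equivalence of the image morphisms in $\Tfrak(A/I)$.

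Finally, the identity on $[\Ucal, \Tcal]_\sim$ is represented by the identity inclusion in $\itv(\tors A)$, which maps to the identity on $\overline{[\Ucal, \Tcal]}_\sim$. Composition in $\Tfrak(A)$ is inherited from $\itv(\tors A)$ after replacing representatives via \cref{lem:composition1} when needed; since $\overline{(-)}$ preserves interval containment and, by \cref{wideintisoquot}, commutes with the identification isomorphisms between $\sim$-equivalent intervals, composable chains descend to composable chains with the expected composite. I expect the main obstacle to be the morphism-equivalence step, since it demands simultaneously tracking the interval inclusion and the wide-subcategory identification, and it is precisely \cref{wideintisoquot} that couples these two pieces of data compatibly.
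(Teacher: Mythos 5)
Your proposal is correct and follows essentially the same route as the paper: well-definedness on objects via \cref{lem:tauperptotauperp} and \cref{cor:wideinquot}, well-definedness on morphisms via the commutative diagram of \cref{wideintisoquot}, and preservation of composition by reducing to nested representatives using \cref{lem:composition1}. No substantive differences to report.
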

\begin{proof}
    $F_I$ maps $\tau$-perpendicular intervals to $\tau$-perpendicular intervals by \cref{lem:tauperptotauperp} which also implies that $F_I$ is well-defined on objects since
    \begin{align*} [\Ucal_1, \Tcal_1] \sim [\Ucal_2, \Tcal_2] &\Leftrightarrow \Ucal_1^\perp \cap \Tcal_1 = \Ucal_2^\perp \cap \Tcal_2 \\
    &\Rightarrow \Ucal_1^\perp \cap \Tcal_1 \cap \mods A/I= \Ucal_2^\perp \cap \Tcal_2 \cap \mods A/I \\
    &\Leftrightarrow \overline{[\Ucal_1, \Tcal_1]} \sim \overline{[\Ucal_2, \Tcal_2]}
    \end{align*}
    by \cref{cor:wideinquot}. It is clear that containment of intervals is preserved by $\overline{(-)}_I$ because intersection with $\mods A/I$ is order-preserving. Let $[\Ucal_i, \Tcal_i] \subseteq [\Vcal_i, \Scal_i]$ for $i =1,2$ be such that $f_{[\Vcal_1, \Scal_1][\Ucal_1, \Tcal_1]} \sim_{\Tfrak(A)} f_{[\Vcal_2, \Tcal_2][\Ucal_2, \Tcal_2]}$. The commutativity of the diagram in \cref{wideintisoquot} implies that 
   \begin{align*}
    	&[\Ucal_1, \Tcal_1] \cap (\Vcal_1^\perp \cap \Scal_1) = [\Ucal_2, \Tcal_2] \cap (\Vcal_2^\perp \cap \Scal_2)  \\
	&\Longrightarrow \overline{[\Ucal_1, \Tcal_1]} \cap (\Vcal_1^\perp \cap \Scal_1 \cap \mods A/I) = \overline{[\Ucal_2, \Tcal_2]} \cap (\Vcal_2^\perp \cap \Scal_2 \cap \mods A/I).
    \end{align*}
    
    Therefore, $F_I$ is well-defined on morphisms i.e. $f_{[\overline{\Vcal}_1, \overline{\Scal}_1][\overline{\Ucal}_1, \overline{\Tcal}_1]} \sim_{\Tfrak(A/I)} f_{[\overline{\Vcal}_2, \overline{\Tcal}_2][\overline{\Ucal}_2, \overline{\Tcal}_2]}$. To show hat composition is preserved it is sufficient, by \cref{lem:composition1}, to consider three $\tau$-perpendicular intervals $[\Xcal , \Ycal] \subseteq [\Ucal, \Tcal] \subseteq [\Vcal, \Scal] \subseteq \tors A$. The following is obvious:
    \begin{align*} 
    F_I([f_{[\Ucal, \Tcal][\Xcal, \Ycal]}] \circ [f_{[\Vcal, \Scal][\Ucal, \Tcal]}]) & = F_I([f_{[\Vcal, \Scal][\Xcal, \Ycal]}]) \\
    &= [f_{\overline{[\Vcal, \Scal][\Ucal, \Tcal]}}] \\
    &= [f_{\overline{[\Ucal, \Tcal][\Xcal, \Ycal]}}] \circ [f_{\overline{[\Vcal, \Scal][\Ucal, \Tcal]}}] \\
    &= F_I([f_{[\Ucal, \Tcal][\Xcal, \Ycal]}]) \circ F_I([f_{[\Vcal, \Scal][\Ucal, \Tcal]}]).
    \end{align*} 
    It is clear that identity morphisms are preserved. 
\end{proof}

We are interested in understanding the categorical properties of the functor $F_I$. 

\begin{lem} \label{lem:FIfaithfuliff}
    The functor $F_I$ is faithful if and only if the restriction $\overline{(-)}: \ftors A \to \ftors A/I$ is injective.
\end{lem}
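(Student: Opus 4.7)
The plan is to prove the two implications separately; both rely on the explicit descriptions of Hom-sets provided in \cref{rmk:latticecapturing} and the compatibility diagram of \cref{wideintisoquot}.

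For the $(\Rightarrow)$ direction, I would examine the morphism set $\Hom_{\Tfrak(A)}([0, \mods A]_\sim, [0, 0]_\sim)$. By \cref{rmk:latticecapturing} this set is in canonical bijection with $\ftors A$: each $\Tcal \in \ftors A$ yields a $\tau$-perpendicular interval $[\Tcal, \Tcal]$ with trivial wide subcategory, and two such morphisms coincide in $\Tfrak(A)$ exactly when the indexing torsion classes are equal. The functor $F_I$ sends the morphism indexed by $\Tcal$ to the one of $\Tfrak(A/I)$ indexed by $\overline{\Tcal} \in \ftors A/I$, since $\overline{[\Tcal, \Tcal]} = [\overline{\Tcal}, \overline{\Tcal}]$ by \cref{lem:tauperptotauperp}. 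Thus faithfulness of $F_I$, restricted to this Hom-set, is exactly the injectivity of $\overline{(-)}: \ftors A \to \ftors A/I$.

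For the $(\Leftarrow)$ direction, assume the restriction of $\overline{(-)}$ to $\ftors A$ is injective. Given $f_1, f_2 \in \Hom_{\Tfrak(A)}([\Vcal, \Scal]_\sim, [\Ucal, \Tcal]_\sim)$ with $F_I(f_1) = F_I(f_2)$, I would invoke \cref{lem:composition1} to pick a common source representative $[\Vcal, \Scal]$ and write $f_i = [f_{[\Vcal, \Scal][\Ucal_i, \Tcal_i]}]$ for $\tau$-perpendicular intervals $[\Ucal_i, \Tcal_i] \subseteq [\Vcal, \Scal]$ with $[\Ucal_1, \Tcal_1] \sim [\Ucal_2, \Tcal_2]$. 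Setting $\Wcal \coloneqq \Vcal^\perp \cap \Scal$, the hypothesis $F_I(f_1) = F_I(f_2)$ unpacks, via the morphism identification in \cref{defn:latticedef}, to $\overline{[\Ucal_1, \Tcal_1]} \cap \overline{\Wcal} = \overline{[\Ucal_2, \Tcal_2]} \cap \overline{\Wcal}$ inside $\tors \overline{\Wcal}$. The bottom lattice isomorphism of \cref{wideintisoquot} (intersection with $\overline{\Wcal}$) lifts this equality to $\overline{[\Ucal_1, \Tcal_1]} = \overline{[\Ucal_2, \Tcal_2]}$, in particular $\overline{\Ucal_1} = \overline{\Ucal_2}$ and $\overline{\Tcal_1} = \overline{\Tcal_2}$. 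Since the endpoints of any $\tau$-perpendicular interval are functorially finite torsion classes, injectivity of $\overline{(-)}$ on $\ftors A$ forces $\Ucal_1 = \Ucal_2$ and $\Tcal_1 = \Tcal_2$, hence $f_1 = f_2$ already in $\Tfrak(A)$.

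The step I expect to require the most care is the lift from $\overline{[\Ucal_1, \Tcal_1]} \cap \overline{\Wcal} = \overline{[\Ucal_2, \Tcal_2]} \cap \overline{\Wcal}$ back to the equality of the intervals themselves inside $\overline{[\Vcal, \Scal]}$. This is immediate once one notes that the bottom map of \cref{wideintisoquot} is a genuine \emph{lattice} isomorphism on the whole sub-interval $\overline{[\Vcal, \Scal]}$ and that both $\overline{[\Ucal_i, \Tcal_i]}$ are $\tau$-perpendicular intervals sitting inside it by \cref{lem:tauperptotauperp}. Minor bookkeeping is also needed to ensure the reduction to a common source representative via \cref{lem:composition1} truly preserves the equivalence class of each morphism; this is built into the construction of $\Tfrak(A)$ and causes no difficulty.
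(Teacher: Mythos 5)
Your proposal is correct and follows essentially the same route as the paper: the forward direction uses the bijection $\Hom_{\Tfrak(A)}([0,\mods A]_\sim,[0,0]_\sim)\leftrightarrow\ftors A$ from \cref{rmk:latticecapturing}, and the converse reduces, after fixing a common source representative, to comparing the endpoints $\Ucal_i,\Tcal_i$ via injectivity of $\overline{(-)}$ on $\ftors A$. Your extra care in unpacking the morphism identification through intersection with $\overline{\Wcal}$ and the lattice isomorphism of \cref{wideintisoquot} only makes explicit a step the paper leaves implicit.
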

\begin{proof}
    $(\Leftarrow)$. Let $[f_{[\Vcal, \Scal][\Ucal_1, \Tcal_1]}], [f_{[\Vcal, \Scal] [\Ucal_2, \Tcal_2]}] \in \Hom_{\Tfrak(A)}([\Vcal, \Scal]_\sim, [\Ucal, \Tcal]_\sim)$ be distinct morphisms, which is to say that either $\Ucal_1 \neq \Ucal_2$ or $\Tcal_1 \neq \Tcal_2$. Then applying $F_I$ gives
    $[f_{\overline{[\Vcal, \Scal]}, \overline{[\Ucal_1, \Tcal_1]}}]$ and $[f_{\overline{[\Vcal, \Scal]}, \overline{[\Ucal_2, \Tcal_2]}}]$ in $\Hom_{\Tfrak(A/I)}({\overline{[\Vcal, \Scal]}_{\sim}, \overline{[\Ucal, \Tcal]}}_{\sim})$,
    which coincide if and only if $\overline{[\Ucal_1, \Tcal_1]}= \overline{[\Ucal_2, \Tcal_2]}$. So both $\overline{\Ucal}_1 =\overline{\Ucal}_2$, $\overline{\Tcal}_1 = \overline{\Tcal}_2$. which would imply that $\overline{(-)}: \ftors A \to \ftors A/I$ is not injective. Thus $F_I$ is injective on $\Hom$-sets, hence faithful. \\
    $(\Rightarrow)$. Let $F_I$ be faithful and assume for a contradiction that $\overline{(-)}: \ftors A \to \ftors A/I$ is not an injection.  
    There exists a distinct morphism $[f_{[0, \mods A][\Tcal,\Tcal]}]: [0, \mods A]_\sim \to  [\Tcal, \Tcal]_\sim $ for every functorially-finite torsion class $\Tcal \in \ftors A$, see also \cref{rmk:latticecapturing}. If $\overline{(-)}$ is not injective, then $\Tcal_1 \cap \mods A/I = \Tcal_2 \cap \mods A/I$ for two distinct $\Tcal_1, \Tcal_2 \in \ftors A$. However, then the distinct morphisms 
    \[ [f_{[0, \mods A][\Tcal_1, \Tcal_1]} ]\neq  [ f_{[0,\mods A][\Tcal_2, \Tcal_2]} ] \]
   of $\Tfrak(A)$ have the same image under $F_I$ and hence $F_I$ would not be faithful.  
\end{proof}

\begin{rmk}
    If $A$ is $\tau$-tilting finite, we have $\tors A = \ftors A$ by \cite[Thm. 3.8]{DIJ2019} and thus $\overline{(-)}: \ftors A \to \ftors A/I$ is surjective by \cite[Prop. 5.7d)]{DIRRT2017}. In this case \cref{lem:FIfaithfuliff} holds true if there is a bijection and thus a lattice isomorphism $\tors A \cong \tors A/I$.
\end{rmk}

Before continuing to study properties of $F_I$ we need the following classical relationship between wide subcategories and semibricks. We say that a module $B = B_1 \oplus \dots \oplus B_r$ is a \textit{semibrick} if $B_i$ are bricks and $\Hom(B_i, B_j)=0$ for all $i \neq j$. The second property will be referred to as the bricks being pairwise $\Hom$-orthogonal.

\begin{thm}\cite{Ringel1976} \label{thm:Ringelbij} 
    Let $B_1, \dots, B_k$ be pairwise $\Hom$-orthogonal bricks in $\mods A$. Then there is a bijection
    \begin{align*}
        \wide A &\leftrightarrow \sbrick A\\
        \Wcal & \mapsto \bigoplus_{B_i \in \simp \Wcal} B_i \\
        \Filt_A \{ B_1, \dots, B_r\} &\mapsfrom B_1 \oplus \dots \oplus B_r
    \end{align*} 
    wide subcategories and semibricks, where $\simp \Wcal$ is the collection of (relative) simple modules of $\Wcal$.
\end{thm}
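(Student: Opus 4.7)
The plan is to separately verify that both assignments are well-defined — that $\simp\Wcal$ really is a semibrick and that $\Filt_A\{B_1, \dots, B_r\}$ really is a wide subcategory — and then to show that the two assignments are mutually inverse. Throughout, the proof is local to $\mods A$ and does not use torsion-theoretic inputs from earlier sections.

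For the map $\Wcal \mapsto \simp\Wcal$: since $\Wcal$ is closed under kernels and cokernels in $\mods A$, the inclusion $\Wcal \hookrightarrow \mods A$ is an exact full embedding and $\Wcal$ inherits the structure of an abelian category. Because every module in $\mods A$ has finite composition length, $\Wcal$ is a length category, so $\simp\Wcal$ is nonempty whenever $\Wcal$ is. I would then invoke Schur's lemma inside $\Wcal$: for $B \in \simp\Wcal$, the fullness of the embedding gives $\End_A(B) = \End_\Wcal(B)$, which is a division ring, so $B$ is a brick in $\mods A$. The same fullness plus Schur applied to a morphism $B_i \to B_j$ between non-isomorphic relative simples shows that such a morphism must vanish, so the collection $\simp\Wcal$ forms a semibrick.

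For the map $\{B_1, \dots, B_r\} \mapsto \Filt_A\{B_1, \dots, B_r\}$: closure under extensions is immediate from the definition of $\Filt_A$. The key step is closure under kernels and cokernels, which I would establish by induction on the total filtration length of source and target. The base case uses that the $B_i$ are bricks and pairwise $\Hom$-orthogonal, so any morphism $B_i \to B_j$ is either zero or an isomorphism (when $i=j$). For the induction, given $f: M \to N$ with filtrations $0 = M_0 \subset M_1 \subset \dots \subset M_s = M$ and analogously for $N$ by the $B_i$, one writes a short exact sequence $0 \to M_{s-1} \to M \to B_{i_s} \to 0$ and applies the snake lemma together with the inductive hypothesis on $M_{s-1}$; the $\Hom$-orthogonality controls the resulting connecting maps and forces both $\ker f$ and $\coker f$ to admit filtrations by the $B_i$.

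For mutual inverseness: starting from $\Wcal$, every object admits a composition series in the length category $\Wcal$, giving $\Wcal \subseteq \Filt_A(\simp\Wcal)$, while $\simp\Wcal \subseteq \Wcal$ together with extension-closure yields the reverse inclusion. Conversely, starting from a semibrick $\{B_1, \dots, B_r\}$, I need to see that the relative simples of $\Filt_A\{B_1, \dots, B_r\}$ are exactly the $B_i$: any proper nonzero subobject of $B_i$ inside the wide subcategory would, via a filtration, yield a nonzero morphism between distinct $B_j$'s or a nonzero endomorphism of some $B_j$ with proper image, each contradicting the semibrick condition; and any relative simple must coincide with the top piece (equivalently the bottom piece) of any filtration of itself, hence with some $B_i$. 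The main obstacle is the induction showing kernel- and cokernel-closure of $\Filt_A\{B_1, \dots, B_r\}$, where one must exploit the $\Hom$-orthogonality carefully to avoid introducing subquotients outside the prescribed semibrick.
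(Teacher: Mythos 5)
The paper itself offers no argument here --- it quotes the result from Ringel's 1976 paper --- so the only meaningful comparison is with the classical proof, and your outline follows that classical route. The easy parts of your proposal are correct: the exact full embedding $\Wcal \hookrightarrow \mods A$ plus Schur's lemma does show that $\simp \Wcal$ is a semibrick of bricks in $\mods A$; and, granted that $\Filt_A\{B_1,\dots,B_r\}$ is wide, your arguments that the two assignments are mutually inverse (composition series in the length category $\Wcal$ for one direction, bottom pieces of filtrations to identify $\simp \Filt_A\{B_1,\dots,B_r\}$ with $\{B_1,\dots,B_r\}$ for the other) are sound.

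The gap is exactly at the step you yourself flag as the main obstacle: closure of $\Filt_A\{B_1,\dots,B_r\}$ under kernels and cokernels. Your snake-lemma plan can be set up precisely (compare the rows $0 \to M_{s-1} \to M \to B_{i_s} \to 0$ and $0 \to N \to N \to 0 \to 0$ with vertical maps $f|_{M_{s-1}}$, $f$, $0$), and it reduces everything to the connecting map $\delta\colon B_{i_s} \to \coker(f|_{M_{s-1}})$: indeed $\coker f \cong \coker \delta$ and $\ker f$ is an extension of $\ker\delta$ by $\ker(f|_{M_{s-1}})$. But at this point the phrase ``Hom-orthogonality controls the connecting maps'' is not a proof: orthogonality is a hypothesis about morphisms among the $B_i$ only, whereas $\delta$ lands in an arbitrary filtered module. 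What is missing is the Schur-type lemma for the filtration closure: for $B$ in the semibrick and $X \in \Filt_A\{B_1,\dots,B_r\}$, every nonzero map $B \to X$ is injective, and the cokernel of such an injection again lies in $\Filt_A\{B_1,\dots,B_r\}$. Both statements are proved by induction on the filtration length of $X$, by composing with the projection onto a top filtration quotient $B'$ of $X$: the composite $B \to B'$ is either zero, in which case the map factors through a shorter filtration, or an isomorphism, in which case the filtration splits as $X \cong B \oplus X'$ and the cokernel is $X'$. With this lemma both $\ker\delta$ and $\coker\delta$ lie in the subcategory and your induction closes; without it, the case $\delta \neq 0$ is unresolved, so as written the central step of the proposal is incomplete rather than wrong.
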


For any $I \in \ideal A$, define the following map using \cref{thm:Ringelbij}:
\begin{equation} \label{eq:wideinclusion}
    \begin{aligned}
        \iota: \wide A/I &\to \wide A \\
        \Filt_{A/I} \{ B_1, \dots, B_r\} &\mapsto \Filt_{A} \{ B_1, \dots, B_r\}
    \end{aligned}
\end{equation}

When the algebra $A$ is $\tau$-tilting finite, then every wide subcategory is $\tau$-perpendicular \cite[Cor. 2.17]{IngallsThomas2009}, see also \cite[Rmk. 4.10]{BuanHanson2017} and $A/I$ is $\tau$-tilting finite \cite[Cor. 1.9]{DIRRT2017}. In this setting we can lift semibricks, which in turn allows us to lift $\tau$-perpendicular subcategories. This way, we are able to show the following.

\begin{lem} \label{prop:surjonobj}
    Let $A$ be $\tau$-tilting finite and $I \in \ideal A$, then the functor $F_I: \Tfrak(A) \to \Tfrak(A/I)$ in \cref{thm:inducedfunctor} is surjective-on-objects.
\end{lem}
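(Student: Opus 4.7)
The plan is to construct, for each object $[\overline{\Ucal}, \overline{\Tcal}]_\sim$ of $\Tfrak(A/I)$, an explicit preimage under $F_I$ by lifting the associated semibrick from $\mods A/I$ to $\mods A$ via the map $\iota$ defined in \cref{eq:wideinclusion}. Since $A$ is $\tau$-tilting finite, so is $A/I$ by \cite[Cor. 1.9]{DIRRT2017}, which means every wide subcategory of $\mods A$ (resp. $\mods A/I$) is $\tau$-perpendicular.

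First, given a $\tau$-perpendicular interval $[\overline{\Ucal}, \overline{\Tcal}] \subseteq \tors A/I$, I would consider its associated $\tau$-perpendicular wide subcategory $\Wcal = \overline{\Ucal}^{\perp_{A/I}} \cap \overline{\Tcal}$ of $\mods A/I$. By \cref{thm:Ringelbij}, this corresponds to a semibrick $B_1 \oplus \cdots \oplus B_r$ in $\mods A/I$ with $\Wcal = \Filt_{A/I}\{B_1, \dots, B_r\}$. The next step is to verify that $B_1 \oplus \cdots \oplus B_r$ remains a semibrick in $\mods A$: because each $B_i$ is annihilated by $I$, we have $\End_A(B_i) = \End_{A/I}(B_i)$ (a division algebra) and $\Hom_A(B_i, B_j) = \Hom_{A/I}(B_i, B_j) = 0$ for $i \neq j$. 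Hence $\iota(\Wcal) = \Filt_A\{B_1, \dots, B_r\}$ is a genuine wide subcategory of $\mods A$, and by $\tau$-tilting finiteness equals $\Wcal_{(M,P)}$ for some $\tau$-rigid pair $(M,P)$, producing a $\tau$-perpendicular interval $[\Ucal_{(M,P)}, \Tcal_{(M,P)}]$ of $\tors A$.

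Finally, I would show that $F_I([\Ucal_{(M,P)}, \Tcal_{(M,P)}]_\sim) = [\overline{\Ucal}, \overline{\Tcal}]_\sim$. By \cref{cor:wideinquot}, the $\tau$-perpendicular wide subcategory associated to $\overline{[\Ucal_{(M,P)}, \Tcal_{(M,P)}]}$ is $\iota(\Wcal) \cap \mods A/I$, so the claim reduces to the identity $\iota(\Wcal) \cap \mods A/I = \Wcal$. The inclusion $\supseteq$ is immediate since an $A/I$-filtration is automatically an $A$-filtration. For the reverse inclusion, if $X \in \iota(\Wcal) \cap \mods A/I$ has an $A$-filtration $0 = X_0 \subseteq X_1 \subseteq \cdots \subseteq X_n = X$ with subquotients among the $B_i$, then $IX_i \subseteq IX = 0$ forces each $X_i \in \mods A/I$, so the filtration lives in $\mods A/I$ and $X \in \Filt_{A/I}\{B_1, \dots, B_r\} = \Wcal$.

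The main obstacle I anticipate is the equality $\iota(\Wcal) \cap \mods A/I = \Wcal$, since it is what guarantees that the equivalence class of the lifted interval actually maps back to the prescribed class in $\Tfrak(A/I)$; the rest of the argument is essentially a bookkeeping of the bijection in \cref{thm:Ringelbij} together with the $\tau$-tilting finite hypothesis, which is precisely what allows every wide subcategory in sight to be $\tau$-perpendicular.
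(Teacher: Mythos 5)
Your proposal is correct and follows essentially the same route as the paper: lift the semibrick of $\overline{\Ucal}^{\perp_{A/I}} \cap \overline{\Tcal}$ via $\iota$, use $\tau$-tilting finiteness to conclude the lifted wide subcategory is $\tau$-perpendicular and hence realised by a $\tau$-perpendicular interval of $\tors A$, and then check via \cref{cor:wideinquot} that this interval maps to the right equivalence class. The only difference is cosmetic: you spell out the identity $\Filt_A\{B_1,\dots,B_r\} \cap \mods A/I = \Filt_{A/I}\{B_1,\dots,B_r\}$ (which the paper asserts in one step), and the paper cites \cite[Thm. 4.5]{BuanHanson2017} where you invoke the $\tau$-rigid pair directly.
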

\begin{proof}
    Let $[\Ucal, \Tcal]_\sim \in \Tfrak(A/I)$, then $\Ucal^{\perp_{A}} \cap \Tcal = \Filt_{A/I} \{ B_1, \dots, B_r\}$ for some semibrick $B_1 \oplus \dots \oplus B_r \in \mods A/I \subseteq \mods A$, by \cref{thm:Ringelbij}. Consider the lifted wide subcategory $\iota (\Filt_{A/I} \{ B_1, \dots, B_r\}) \subseteq \mods A$. Since $A$ is $\tau$-tilting finite, $\Filt_A \{ B_1, \dots, B_r\}$ is also a $\tau$-perpendicular subcategory. By \cite[Thm. 4.5]{BuanHanson2017} there exists some $\tau$-perpendicular interval $[\Acal, \Bcal] \subseteq \tors A$ such that $\Acal^{\perp_A} \cap \Bcal = \Filt_A \{ B_1, \dots, B_r\}$. It is clear that $\overline{[\Acal, \Bcal]} \sim [\Ucal, \Tcal]$ since, using \cref{lem:perpinquot} to obtain the first equality, we have
    \[ \overline{\Acal}^{\perp_{A/I}} \cap \overline{\Bcal} = \Acal^{\perp_A} \cap \Bcal \cap \mods A/I = \Filt_{A} \{ B_1, \dots, B_r\} \cap \mods A/I = \Filt_{A/I} \{ B_1, \dots, B_r\} = \Ucal^{\perp_A} \cap \Tcal. \]
    Hence every object $[\Ucal, \Tcal]_\sim \in \Tfrak(A/I)$ lies in the image of $F_I$.
\end{proof}

We illustrate in \cref{exmp:taufinite} that the assumption of $\tau$-tilting finiteness is necessary. To understand in which cases $F_I: \Tfrak(A) \to \Tfrak(A/I)$ is full we first need to introduce the Cartesian product of posets.

\begin{defn}
    Let $(P_1, \leq_1)$ and $(P_2, \leq_2)$ be two posets. Define their product $(P_1 \times P_2, \leq)$ via the partial order
    \[ (a,b) \leq (c,d) \quad \Leftrightarrow \quad a \leq_1 c \text{ and } b \leq_2 d.\]
    If $(P_1, \leq_1)$ and $(P_2, \leq_2)$ are lattices, define the join and meet component-wise, then $(P_1 \times P_2, \leq)$ is a lattice.
\end{defn}

We now prove the following intermediate result. See \cite[Thm. 4.19a]{AHIKM2022} for a similar result about $g$-vector fans.
\begin{lem} \label{lem:product}
    Let $A  \cong A_1 \times A_2$ where $A_1, A_2$ are finite-dimensional algebras. Then as posets we have 
    \[ \tors A \cong \tors A_1 \times \tors A_2, \quad \text{and} \quad \ftors A \cong \ftors A_1 \times \ftors A_2. \]
\end{lem}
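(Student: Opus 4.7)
The plan is to exploit the fact that a product decomposition $A \cong A_1 \times A_2$ induces a decomposition of the module category. Writing $e_1, e_2$ for the central idempotents with $e_1 + e_2 = 1$ and $A_i = e_i A$, every $M \in \mods A$ decomposes canonically as $M = Me_1 \oplus Me_2$ with $Me_i \in \mods A_i$, and every morphism respects this decomposition. Thus $\mods A$ is equivalent to the product category $\mods A_1 \times \mods A_2$, and in particular submodules, quotients, and extensions are computed componentwise.

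First I would construct the bijection $\tors A \to \tors A_1 \times \tors A_2$ by sending $\Tcal \mapsto (\Tcal \cap \mods A_1, \Tcal \cap \mods A_2)$. Since $\mods A_i$ is closed under quotients and extensions in $\mods A$ (via the idempotent $e_i$), each $\Tcal \cap \mods A_i$ is a torsion class of $\mods A_i$. The inverse assignment sends $(\Tcal_1, \Tcal_2) \mapsto \Tcal_1 \oplus \Tcal_2 := \{M_1 \oplus M_2 : M_i \in \Tcal_i\}$; closure under quotients and extensions follows immediately from the componentwise computation above. The two maps are mutually inverse because any $M \in \Tcal$ equals $Me_1 \oplus Me_2$, and each $Me_i$ is a quotient of $M$, hence lies in $\Tcal \cap \mods A_i$. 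Both directions clearly preserve inclusions, giving the desired poset isomorphism.

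Next I would restrict the bijection to functorially-finite torsion classes. For this I use that every $\tau$-rigid pair $(M,P)$ of $A$ splits as $(M_1 \oplus M_2, P_1 \oplus P_2)$ with $(M_i, P_i)$ a $\tau$-rigid pair of $A_i$, because $\Hom_A$ and the Auslander--Reiten translate $\tau$ both decompose across the product (there are no nonzero morphisms between an $A_1$-module and an $A_2$-module). Consequently $\Fac_A M = \Fac_{A_1} M_1 \oplus \Fac_{A_2} M_2$, so under the bijection above functorially-finite torsion classes correspond exactly to pairs of functorially-finite torsion classes, yielding $\ftors A \cong \ftors A_1 \times \ftors A_2$.

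The main point requiring slight care is verifying that $\Tcal_1 \oplus \Tcal_2$ is closed under extensions in $\mods A$: given a short exact sequence $0 \to M' \to M \to M'' \to 0$ in $\mods A$ with outer terms in $\Tcal_1 \oplus \Tcal_2$, applying the exact idempotent functor $(-)e_i$ produces short exact sequences in $\mods A_i$ whose outer terms lie in $\Tcal_i$, so $Me_i \in \Tcal_i$, and hence $M = Me_1 \oplus Me_2 \in \Tcal_1 \oplus \Tcal_2$. This is the only nontrivial verification; the rest is routine bookkeeping with the componentwise decomposition of $\mods A$.
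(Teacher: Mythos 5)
Your proof is correct, and the first half is essentially the paper's own argument: the same assignment $\Tcal \mapsto (\Tcal \cap \mods A_1, \Tcal \cap \mods A_2)$ with inverse $(\Tcal_1,\Tcal_2) \mapsto \{M_1 \oplus M_2\} = \add(\Tcal_1 \cup \Tcal_2)$, with closure under quotients and extensions checked componentwise via the central idempotents, exactly as in the paper. Where you diverge is the functorially-finite statement. The paper stays at the level of approximations: it cites \cite[Prop.\ 5.6b]{DIRRT2017} to get that $\Tcal \cap \mods A_i$ is functorially finite, and for the converse it constructs left and right $\add(\Tcal_1 \cup \Tcal_2)$-approximations of an arbitrary module directly, by taking a $\Tcal_1$-approximation of its $\mods A_1$-summand and a $\Tcal_2$-approximation of its $\mods A_2$-summand (using $\Hom_A(X_1,X_2)=0=\Hom_A(X_2,X_1)$). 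You instead route through $\tau$-tilting theory: functorially-finite torsion classes are exactly the classes $\Fac M$ with $M$ $\tau$-rigid (the AIR characterisation the paper recalls), $\tau$-rigid pairs split componentwise because $\tau$ and $\Hom$ decompose across the product, and $\Fac_A M = \Fac_{A_1}M_1 \oplus \Fac_{A_2}M_2$. Both routes are valid; yours leans on the AIR bijection but avoids citing the DIRRT quotient result and avoids constructing approximations by hand, while the paper's is more elementary in that it never invokes $\tau$-rigidity. One small point to make explicit in your version: the phrase ``correspond exactly'' needs both implications, i.e.\ you should state that $M_1 \oplus M_2$ is $\tau_A$-rigid whenever each $M_i$ is $\tau_{A_i}$-rigid (which follows from the vanishing of cross-$\Hom$s and the componentwise computation of $\tau$), so that $\Tcal_1 \oplus \Tcal_2$ is again functorially finite; as written this direction is only implicit.
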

\begin{proof}
     It is well-known that we have a $\mods A \cong \mods A_1 \times \mods A_2$. This implies that the inverse bijections are given by
    \begin{align*}
        \tors A &\cong \tors A_1 \times \tors A_2 \\
        \Tcal &\mapsto (\Tcal \cap \mods A_1, \Tcal \cap \mods A_2) \\
        \add(\Tcal_1 \cup \Tcal_2) & \mapsfrom (\Tcal_1, \Tcal_2).
    \end{align*}
   If $\Tcal \in A$ is a torsion class, then it is clear that $\Tcal \cap \mods A_i \in \tors A_i$ for $i = 1,2$ since both terms of the intersection are closed under extensions and quotients. In particular, if $\Tcal$ is functorially-finite then $\Tcal \cap \mods A_I$ is functorially-finite, for $i=1,2$ by \cite[Prop. 5.6b]{DIRRT2017}. Similarly, $\add(\Tcal_1 \cup \Tcal_2)$ is a torsion class, since $\Ext_A^1(X_1^a, X_2^b) =0 = \Ext_A^1(X_2^b, X_1^d)$ for all $X_1 \in \mods A_1$, $X_2 \in \mods A_2$ and $a,b,c,d \geq 1$ implies closure under extensions. Because $\Hom_A(X_1,X_2) =0= \Hom_A(X_2,X_1)$ it follows that $\add(\Tcal_1 \cup \Tcal_2)$ is functorially-finite, since any module $M \in \mods A$ admits left and right $\add(\Tcal_1 \cup \Tcal_2)$-approximations, which are simply given by a $\Tcal_1$-approximation of the direct summand of $M$ which is in $\mods A_1$, and a $\Tcal_2$-approximation of the direct summand of $M$ which is in $\mods A_2$. One sees directly that these are inverse assignments and order-preserving.
\end{proof}

For remainder of this section, assume for simplicity that the field $K$ is algebraically closed. This assumption allows us to say that $A \cong B \times C$ for some finite-dimensional algebras $B$ and $C$ if there exist two sets of simple $A$-modules $\Scal_1$ and $\Scal_2$ such that $\Ext_A^1(S_1, S_2) = \Ext_A^1(S_2, S_1)$ for $S_i \in \Scal_i$ and $i=1,2$. This is because we may write $A \cong KQ/I$ for some ideal $I$, where $Q$ is the $\Ext$-quiver of the algebra, see \cite[Lem. II.2.5, Lem. III.2.12]{bluebookV1}. As a converse to \cref{lem:product} we have the following. 

\begin{lem}\label{lem:productconverse}
    Let $A$ be a finite-dimensional algebra such that $\tors A \cong \tors B' \times \tors C'$, for some finite-dimensional algebras $B'$ and $C'$. Then $A \cong B \times C$ for some finite-dimensional algebras $B$ and $C$.
\end{lem}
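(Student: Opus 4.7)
The strategy is to apply the $\Ext$-quiver criterion stated in the paragraph preceding the lemma: it suffices to exhibit two non-empty collections $\Scal_1, \Scal_2$ of simple $A$-modules whose disjoint union is the set of all simples and such that $\Ext^1_A(S, T) = 0 = \Ext^1_A(T, S)$ for all $S \in \Scal_1$, $T \in \Scal_2$. Let $\eta: \tors A \xrightarrow{\sim} \tors B' \times \tors C'$ denote the given lattice isomorphism.

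The first step is to extract such a partition from $\eta$. For every simple $A$-module $S$ the subcategory $\Filt_A(S)$ is an atom of $\tors A$: it is closed under quotients (a quotient of an $S$-filtered module is again $S$-filtered) and hence is a torsion class, and any non-zero sub-torsion-class of $\Filt_A(S)$ must contain $S$, since $S$ appears as a top quotient of every non-zero member, and therefore must contain all of $\Filt_A(S)$. The atoms of the product $\tors B' \times \tors C'$ are precisely the pairs $(a,0)$ and $(0,a')$ with $a$ or $a'$ an atom of the respective factor, so $\eta(\Filt_A(S))$ takes exactly one of these two shapes; this assignment partitions the simples of $A$ as $\Scal_1 \sqcup \Scal_2$. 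If either set is empty then one of $\tors B'$ or $\tors C'$ is trivial and the conclusion is immediate with $C = 0$ or $B = 0$.

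Next I would rule out extensions across the partition. Suppose, for contradiction, that $S_1 \in \Scal_1$ and $S_2 \in \Scal_2$ admit a non-split extension $0 \to S_2 \to M \to S_1 \to 0$; then $\Trm(M) = \Filt_A(\Fac M) = \Filt_A(\{M, S_1\})$. The plan is to produce five distinct torsion classes in the interval
\[ I \coloneqq [0, \Filt_A(S_1) \vee \Filt_A(S_2)] \subseteq \tors A, \]
namely $0$, $\Filt_A(S_1)$, $\Filt_A(S_2)$, $\Trm(M)$, and $\Filt_A(S_1) \vee \Filt_A(S_2)$. The key point is that $S_2 \notin \Trm(M)$, proved by a dimension count: any $X \in \Filt_A(\{M, S_1\})$ satisfies $\dim_K X = a \dim_K M + b \dim_K S_1$ for some non-negative integers $a, b$, and setting $X = S_2$ together with $\dim_K M = \dim_K S_1 + \dim_K S_2 > \dim_K S_2$ forces $a = 0$, placing $X$ in $\Filt_A(S_1)$, which does not contain $S_2 \not\cong S_1$. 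Consequently $\Filt_A(S_1) \subsetneq \Trm(M) \subsetneq \Filt_A(S_1) \vee \Filt_A(S_2)$ and the five classes are genuinely distinct.

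Finally, under $\eta$ the interval $I$ corresponds to $[(0,0),(a_1,a_2)]$ in $\tors B' \times \tors C'$, with $a_i$ atoms in the respective factors, and so has exactly $2 \times 2 = 4$ elements --- a contradiction. The symmetric argument with $S_1, S_2$ interchanged rules out $\Ext^1_A(S_2, S_1) \neq 0$, completing the proof. The main obstacle I anticipate is the dimension-counting step verifying that $S_2 \notin \Trm(M)$; everything else then reduces to a clean comparison of interval cardinalities across the lattice isomorphism $\eta$.
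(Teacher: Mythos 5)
Your argument is correct, and its skeleton is the same as the paper's: transport the two atoms $\Filt_A(S_1)$ and $\Filt_A(S_2)$ across the lattice isomorphism, use that the interval below their join is a four-element square because it is one in the product lattice, deduce $\Ext^1_A$-vanishing in both directions between simples attached to different factors, and finish with the $\Ext$-quiver splitting criterion. Where you genuinely diverge is in how the vanishing is extracted and how the partition is set up. The paper starts from simple $B'$- and $C'$-modules, notes that the lower cover relations of the transported square are labelled by simple $A$-modules, and concludes via the brick-labelling results of \cite[Lem.~4.26]{DIRRT2017}; you instead partition $\simp (\mods A)$ directly according to which factor the atom $\Filt_A(S)$ lands in (which is arguably cleaner than the paper's passage from one pair to the full partition) and then exhibit a fifth torsion class $\Trm(M)$ inside the square, avoiding the brick-label machinery altogether. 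The one step you assert without justification is the identification $\Trm(M)=\Filt_A(\Fac M)=\Filt_A\{M,S_1\}$: it is true, but it needs an argument, e.g.\ an induction on $n$ showing that every quotient of $M^n$ is an extension of a quotient of $M$ (which is $0$, $S_1$ or $M$, since the nonsplit extension $M$ has $S_2$ as its unique proper nonzero submodule) by a quotient of $M^{n-1}$. Alternatively, you can bypass both this identification and the dimension count: since the top of $M$ is $S_1$, any nonzero map $M\to S_2$ would be surjective with kernel the unique maximal submodule $S_2$, forcing $S_1\cong S_2$; hence $\Hom_A(M,S_2)=0$, so $S_2\notin\Fac M$, and since $S_2$ is simple this already gives $S_2\notin\Filt_A(\Fac M)=\Trm(M)$, which is all your five-element count requires.
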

\begin{proof}
    Denote the lattice isomorphism by $\phi: \tors B' \times \tors C' \to \tors A$. Take $X_1$ to be a simple $B'$-module and $X_2$ to be a simple $C'$-module, then by definition of the join we have 
    \[ (\Filt_{B'}(X_1),0) \lor (0, \Filt_{C'}(X_2) = (\Filt_{B'}\{X_1\}, \Filt_{C'}\{X_2\}). \]
    Applying $\phi$ to both sides of the equation, we deduce that there are simple $A$-modules, say $\widehat{X}_1$ and $\widehat{X}_2$, labelling the cover relations $\phi(\Filt_{B'}(X_1), 0) \to 0 \subseteq \Hasse(\tors A)$ and $\phi(0,\Filt_{C'}(X_2)) \to 0 \subseteq \Hasse(\tors A)$ respectively, such that $\Filt_A\{ \widehat{X}_1, \widehat{X}_2\}$ contains no bricks other than $B_1$ and $B_2$. It follows that $\Ext_A^1(\widehat{X}_1, \widehat{X}_2)=0=\Ext_A^1(\widehat{X}_2, \widehat{X}_1)$ by \cite[Lem. 4.26]{DIRRT2017}. Thus there exist two sets of simple modules $\Scal_1, \Scal_2 \subseteq \mods A$ such that $\Scal_1 \cup \Scal_2 = \simp(\mods A)$ and such that $\Ext_A^1(S_1, S_2) = 0 = \Ext_A^1(S_2,S_1)$ for $S_i \in \Scal_i$ for $i=1,2$. Define 
    \[ B \cong A/\left\langle \sum_{i : S(i) \in \Scal_1} e_i\right\rangle, \quad \text{and} \quad C \cong A/\left\langle \sum_{i : S(i) \in \Scal_2} e_i\right\rangle. \]
    Then $A \cong B \times C$ as required.
\end{proof}

We can now describe when the functor $F_I: \Tfrak(A) \to \Tfrak(A/I)$ is full. To avoid further technicalities involving isomorphisms between infinite lattices of functorially-finite torsion classes like in \cref{exmp:tauequiv} (3), we restrict ourselves to the finite case for the following result.

\begin{prop}\label{lem:Ffulliff}
    Let $A$ be $\tau$-tilting finite. The functor $F_I$ is full if and only if $\tors A \cong \tors A/I \times \tors B$ for some finite-dimensional algebra $B$. 
\end{prop}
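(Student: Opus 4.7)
The approach is to exploit product decompositions on both sides of the claimed equivalence.

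For the sufficiency $(\Leftarrow)$, assume $\tors A \cong \tors A/I \times \tors B$ as lattices. By \cref{lem:productconverse} this promotes to a block decomposition of the algebra $A \cong A/I \times B$ (using $K$ algebraically closed), and one checks that the ideal $I$ corresponds to the second factor so that the lattice iso is compatible with the quotient map $\tors A \twoheadrightarrow \tors A/I$ from \cref{thm:DIRRTquot}. Since $\tau$-rigid pairs in a product algebra split into pairs of $\tau$-rigid pairs in the factors, the $\tau$-perpendicular intervals of $\tors A$ decompose as products, and the equivalence relation $\sim$ on objects of $\Tfrak(A)$ factors componentwise (the wide subcategory of a product interval being the product of the wide subcategories). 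Consequently $\Tfrak(A) \cong \Tfrak(A/I) \times \Tfrak(B)$ and $F_I$ identifies with the first-factor projection. To verify fullness, I would take a morphism $f_1 : F_I(X) \to F_I(Y)$ in $\Tfrak(A/I)$ and lift it to a pair $(f_1, f_2) : X \to Y$ by producing an appropriate $f_2$ in $\Tfrak(B)$; representatives for the second factor are chosen using \cref{wideintisoquot} and compatibility of $\sim$ with the product.

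For the necessity $(\Rightarrow)$, assume $F_I$ is full. I would construct the algebra $B$ by separating the bricks of $\mods A$ into those \emph{preserved} (lying in $\mods A/I$) and those \emph{contracted} by $\Phi_I$ (satisfying $IB \neq 0$), according to \cref{thm:brickcontract}(1)--(2). The goal is to show that these two collections of bricks are $\Ext^1$-orthogonal at the level of simple $A$-modules, which by the Gabriel/Ext-quiver argument preceding \cref{lem:productconverse} will yield an algebra decomposition $A \cong A/I \times B$ and hence $\tors A \cong \tors A/I \times \tors B$ via \cref{lem:product}. To extract the orthogonality, I would argue by contrapositive: given a non-trivial extension between a preserved simple $S_1 \in \mods A/I$ and a contracted simple $S_2$, the corresponding join-interval in $\Hasse(\tors A)$ generated by these two atoms exhibits a $\tau$-perpendicular interval (\cref{prop:DIRRT420}) whose image in $\Tfrak(A/I)$ admits a morphism on a fixed object pair that cannot be lifted to $\Tfrak(A)$ while remaining consistent with the prescribed source and target (a feature not obstructed by \cref{prop:surjonobj} alone). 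Translating this obstruction back through \cref{thm:brickcontract} and \cref{cor:wideinquot} pins down the required vanishing of $\Ext^1$.

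\textbf{Main obstacle.} The principal difficulty is the $(\Rightarrow)$ direction: converting the abstract categorical fullness of $F_I$ into the concrete $\Ext^1$-orthogonality between preserved and contracted simple modules. The delicate point is that the equivalence relation on morphisms of $\Tfrak(A)$ allows for considerable flexibility in choosing representatives, so the non-liftability of a morphism in $\Tfrak(A/I)$ must be pinned down on a \emph{fixed} pair of objects in $\Tfrak(A)$ whose image is known, rather than just in the essential image. A secondary challenge for $(\Leftarrow)$ is verifying that the product decomposition $\Tfrak(A) \cong \Tfrak(A/I) \times \Tfrak(B)$ respects morphisms coherently and not merely objects, so that the projection functor is genuinely full and not only surjective-on-objects as guaranteed by \cref{prop:surjonobj}.
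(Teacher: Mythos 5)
Your overall outline (reduce to a block decomposition via \cref{lem:productconverse} and \cref{lem:product}, and extract $\Ext^1$-orthogonality of simples from fullness via the brick-contraction description of \cref{thm:brickcontract}) is the same as the paper's, but both halves have genuine gaps at the decisive step. For $(\Leftarrow)$, your plan is to realise $F_I$ as the projection $\Tfrak(A/I)\times\Tfrak(B)\to\Tfrak(A/I)$ and lift a morphism $f_1\colon F_I(X)\to F_I(Y)$ to a pair $(f_1,f_2)$ "by producing an appropriate $f_2$ in $\Tfrak(B)$". This is exactly where the argument breaks: there is in general no morphism at all between the $B$-components of $X$ and $Y$ in $\Tfrak(B)$ (Hom-sets in these categories are frequently empty, e.g.\ there are no non-identity morphisms out of the class of trivial intervals), so no choice of $f_2$ exists to pair with $f_1$. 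The paper avoids this entirely: it writes $A\cong C\times B$ with $\tors C\cong\tors A/I$ (note $C$ need not be $A/I$ itself; the paper passes through the equivalence $\Tfrak(C)\cong\Tfrak(A/I)$ of \cref{thm:categoryiso} and the factorisation $F_I=G\circ F_C$), fixes a pair of representatives $[\Ucal,\Tcal]\subseteq[\Vcal,\Scal]$ in $\tors A$, and then \emph{splices}: given an arbitrary representative morphism $[\Vcal_1,\Scal_1]\supseteq[\Ucal_1,\Tcal_1]$ downstairs, it lifts it to $[\add(\Vcal_1\cup(\Vcal\cap\mods B)),\add(\Scal_1\cup(\Scal\cap\mods B))]\supseteq[\add(\Ucal_1\cup(\Ucal\cap\mods B)),\add(\Tcal_1\cup(\Tcal\cap\mods B))]$, keeping the \emph{same} $\mods B$-components for source and target so that the required containment is automatic, and checks via the simples of the associated wide subcategories (\cref{cor:wideinquot}, \cite[Thm.~4.16]{DIRRT2017}) that these are representatives of the given objects. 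Your "secondary challenge" is thus the actual obstruction, and the product-projection picture does not resolve it.

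For $(\Rightarrow)$, your sketch names the right target ($\Ext_A^1(S_1,S_2)=0=\Ext_A^1(S_2,S_1)$ for preserved versus contracted simples) but is missing the mechanism by which fullness is used. The paper first applies fullness to the specific Hom-sets $\Hom_{\Tfrak(A)}([0,\mods A]_\sim,[\Tcal_1,\Tcal_2]_\sim)\to\Hom_{\Tfrak(A/I)}([0,\mods A/I]_\sim,[0,0]_\sim)$ for each contracted arrow, concluding that \emph{every} congruence class $[\pi_\downarrow\Xcal,\pi_\uparrow\Xcal]$ carries the same set of brick labels; this equality of brick sets is the concrete consequence of fullness, and your appeal to "a morphism that cannot be lifted while remaining consistent with the prescribed source and target" does not produce it. Second, the two $\Ext$-directions then require genuinely different arguments inside the polygon $[0,\Filt_A\{S_1,S_2\}]$ (via \cref{prop:DIRRT420}, \cref{thm:brickcontract} and the restricted-congruence results from Reading): a non-split extension with the preserved simple on top gives a brick that cannot label any arrow in the other congruence class, while the opposite direction needs the longer argument showing that otherwise the class $\Phi_I^{-1}(0)$ would fail to be an interval, contradicting \cref{prop:quotlattiso}. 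Your single contrapositive, as stated, does not distinguish these cases and leaves the hardest part of the proof unaddressed.
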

\begin{proof}
    $(\Leftarrow)$. It follows from (the proof of) \cref{lem:productconverse} that we can write $A \cong C \times B$ where $C$ is a quotient algebra of $A$ by an ideal generated by the primitive orthogonal idempotent such that $\tors C \cong \tors A/I$. By \cref{thm:categoryiso} we have an equivalence $G: \Tfrak(C) \xrightarrow{\cong} \Tfrak(A/I)$. Moreover, we have a sequence of surjective algebra morphisms $A \twoheadrightarrow C \twoheadrightarrow A/I$ and hence the lattice congruences on $\tors A$ induced by $A \twoheadrightarrow C$ and $A \twoheadrightarrow A/I$ coincide. Furthermore, the functor $F_I$ factors through the equivalence, i.e. $F_I = G \circ F_C$, where $F_C: \Tfrak(A) \to \Tfrak(C)$ is induced by the surjection $A \twoheadrightarrow C$. Since the composition of full functors is full, it is sufficient to show that $F_C$ is full. \\

    Let $[\Vcal', \Scal']_\sim, [\Ucal', \Tcal']_\sim \in \Tfrak(A)$ be such that there is a pair of representatives $[\Vcal, \Scal] \in [\Vcal', \Scal']_\sim$ and $[\Ucal, \Tcal] \in [\Ucal', \Tcal']_\sim$ such that $[\Vcal, \Scal] \supseteq [\Ucal, \Tcal]$. We need to show that the induced map
    \[ \Hom_{\Tfrak(A)}([\Vcal, \Scal]_\sim, [\Ucal, \Tcal]_\sim) \to \Hom_{\Tfrak(C)} ([\Vcal \cap \mods C, \Scal \cap \mods C]_\sim, [\Ucal \cap \mods C, \Tcal \cap \mods C]_\sim)\]
    is surjective. Thus take an arbitrary morphism $f_{[\Vcal_1, \Scal_1][\Ucal_1, \Tcal_1]} \in \Tfrak(C)$ in the codomain, where $[\Vcal_1, \Scal_1] \sim [\Vcal \cap \mods C, \Scal \cap \mods C]$ and $[\Ucal_1, \Tcal_1]\sim [\Ucal \cap \mods C, \Tcal \cap \mods C]$. Define 
    \[ [\widehat{\Ucal}, \widehat{\Tcal}] \coloneqq [ \add(\Ucal_1 \cup (\Ucal \cap \mods B)), \add(\Tcal_1 \cup (\Tcal \cap \mods B))],\]
    which is an interval of $\tors A$ because both terms are additive closures of a torsion classes in $\mods B$ and a torsion class in $\mods C$. Since $\mods A \cong \mods B \times \mods C$ the result is a torsion class, similar to the proof of \cref{lem:product}.Then, the simple objects of the wide subcategories satisfy 
    \begin{align*}
        \simp (\Ucal^{\perp_A} \cap \Tcal) & = (\simp(\Ucal^{\perp_A} \cap \Tcal) \cap \mods C) \cup (\simp(\Ucal^{\perp_A} \cap \Tcal) \cap \mods B)\\
        & = \simp (\Ucal_1^{\perp_C} \cap \Tcal_1 ) \cup (\simp(\Ucal^{\perp_A} \cap \Tcal \cap \mods B)) \\
        & = \simp (\widehat{\Ucal}^{\perp_A} \cap \widehat{\Tcal}),
    \end{align*}
    where the second equality follows from \cref{cor:wideinquot}. By \cite[Thm. 4.16(a)]{DIRRT2017} this 
    implies $\Ucal^{\perp_A} \cap \Tcal = \widehat{\Ucal}^{\perp_A} \cap \widehat{\Tcal}$ and thus $[\widehat{\Ucal}, \widehat{\Tcal}]_\sim = [\Ucal, \Tcal]_\sim \in \Tfrak(A)$. Similarly we define
    \[ [\widehat{\Vcal}, \widehat{\Scal}] \coloneqq [\add(\Vcal_1 \cup (\Vcal \cap \mods B)), \add(\Scal_1 \cup (\Scal \cap \mods B))] \]
    and obtain $[\widehat{\Vcal}, \widehat{\Scal}]_\sim =  [\Vcal, \Scal]_\sim \in \Tfrak(A)$. Then, under $F_C$, the morphism $[f_{[\widehat{\Vcal}, \widehat{\Scal}][\widehat{\Ucal}, \widehat{\Tcal}]}] \in \Hom_{\Tfrak(A)}([\Vcal, \Scal]_\sim, [\Ucal, \Tcal]_\sim)$ gets mapped to $[f_{[\Vcal_1, \Scal_1][\Ucal_1, \Tcal_1]}] \in \Tfrak(C)$ as required. Therefore the induced map between $\Hom$-sets is surjective. Hence $F_C$ is full, and so is $F_I = G \circ F_C$. \\

    $(\Rightarrow)$. Assume $F_I$ is full. If $\tors A \cong \tors A/I$, we are done, otherwise let $\Tcal_1 \xleftarrow{B} \Tcal_2 \subseteq \Hasse(\tors A)$ be an arrow contracted by the lattice congruence. In particular since $F_I$ is full, the following induced map of $\Hom$-sets is surjective:
    \[ \Hom_{\Tfrak(A)}([0,\mods A]_\sim, [\Tcal_1, \Tcal_2]_\sim) \to \Hom_{\Tfrak(A/I)}([0,\mods A/I]_\sim, [0,0]_\sim). \]
    Let $\Xcal \in \tors A/I$ be any torsion class, then the corresponding morphism $[f_{[0,\mods A/I][\Xcal, \Xcal]}] \in \Tfrak(A/I)$ lies in the codomain of the map above. Thus the above map is surjective only if there exists an arrow $\Tcal_3 \xleftarrow{B} \Tcal_4 \in \Hasse([\pi_{\downarrow} \Xcal, \pi_{\uparrow} \Xcal])$. Applying this idea to all contracted arrows in a congruence class of $\Phi_I$, we deduce that 
    \begin{equation}\label{eq:samebricks} \brick [\pi_{\downarrow} \Xcal_1, \pi_{\uparrow} \Xcal_1] = \brick [\pi_{\downarrow} \Xcal_2, \pi_{\uparrow} \Xcal_2],\end{equation}
    for all $\Xcal_1, \Xcal_2 \in \tors A/I$. Let $\Scal_2 \coloneqq \simp A/I$ and $\Scal_1 \coloneqq \simp A \setminus \Scal_2$. We will show that $\Ext_A^1(S_1, S_2) = 0 = \Ext_A^1(S_2, S_1)$ for all simple modules $S_1 \in \Scal_1$ and $S_2 \in \Scal_2$. This implies that the corresponding idempotents $\epsilon_1 \coloneqq \sum_{i: S(i) \in \Scal_1} e_i$ and $\epsilon_2 \coloneqq \sum_{i: S(i) \in \Scal_2} e_i$ are central and thus that $A$ is not connected. Take $S_1 \in \Scal_1$ and $S_2 \in \Scal_2$. Since $A$ is $\tau$-tilting finite, description of the lattice congruence of \cite[Prop. 4.21, Thm. 4.23]{DIRRT2017} and \cite[Thm. 9-6.5]{Reading2016} imply that all arrows arising in $[0, \Filt_A\{ S_1, S_2\}] \subseteq \tors A$ except the two labelled by $S_2$ are contracted by the congruence $\Phi_I$. Hence this polygon consists of two halves, one half is the side lying in the congruence class $\Phi_I^{-1}(0) \subseteq \tors A$, the other lies in the congruence class $\Phi_I^{-1}(\Filt_{A/I}\{S_2\}) \subseteq \tors A$. The two halves are connected by the two arrows labelled by $S_2$. \\
    
    Let $0 \to S_2 \to M \to S_1 \to 0$ be a non-split short exact sequence in $\mods A$, so in particular $M \not \in S_2^{\perp_A}$. Now, $M$ is a brick by \cite[Lem. 4.26]{DIRRT2017} and arises as a label of an arrow in the polygon $[0, \Filt_A\{ S_1, S_2\}] \subseteq \tors A$ by \cite[Thm. 4.21(b)]{DIRRT2017}. More precisely $M$ is a label in the half of the polygon lying in $\Phi_I^{-1}(0) = [0, \pi_\uparrow 0]$ since $M \not \in S_2^{\perp_A}$. However, from \cref{eq:samebricks} it follows that $M$ must arise as a label of some arrow in $\Phi_I^{-1}(\Filt_{A/I}\{S_2\}) = [\Filt_A\{ S_2\}, \pi_\uparrow \Filt_{A/I}\{S_2\}]$, which is a contradiction, since this requries $M \in \Filt_A\{ S_2\}^{\perp_A} = S_2^{\perp_A}$ by definition of the brick-labelling. Thus $\Ext_A^1(S_1, S_2)=0$.\\

    Let $0 \to S_1 \to M \to S_2 \to 0$ be a non-split short exact sequence in $\mods A$. Like in the previous paragraph, it follows that $M$ arises as a label of an arrow in the half of the polygon $[0, \Filt_A\{ S_1, S_2\}] \subseteq \tors A$, which is contained in $\Phi_I^{-1}(\Filt_{A/I}\{S_2\})$. However, by \cref{eq:samebricks} it must also arise in $\Phi_I^{-1}(0)$. Since $M \not \in S_1^{\perp_A}$, $M$ cannot label an arrow above $\Filt_A\{S_1\}$ in $\Hasse(\tors A)$. Consequently, there must exist another simple module $S_3 \in \mods A$ such that $M \in \brick[\Filt_A\{S_3\}, \pi_{\uparrow} 0]$. As $\Filt_A\{S_3\} \in \Phi_I^{-1}(0)$ it follows that $S_3 \in \Scal_2$. In other words, there exists an arrow $\Tcal' \xleftarrow{M} \Tcal \subseteq \Hasse([\Filt_A\{ S_3\}, \pi_{\uparrow} 0])$ labelled by $M$. Because $M \in \Tcal$ it follows that $S_2 \in \Fac M \subseteq \Tcal$, and moreover that $S_2 \not \in (\Tcal')^{\perp_A}$, as otherwise there would be two bricks $M$ and $S_2$ in the intersection $(\Tcal')^{\perp_A} \cap \Tcal$, which is a contradiction by \cite[Thm. 3.3(b)]{DIRRT2017}. Any morphism to the simple module $S_2$ is an epimorphism, and since torsion classes are closed under quotients, it follows that $S_2 \in \Tcal'$. As a consequence the join $\Filt_A\{S_2\} \lor \Filt_A\{S_3\} = \Filt_A\{S_2, S_3\}$ is contained in $\Tcal'$. In particular, there exists an arrow $\Tcal'' \xleftarrow{S_2} \Filt_A\{S_2, S_3\}$ in $\Hasse(\tors A)$ for some $\Tcal''$ containing $\Filt_A\{S_3\}$. Combining these observations, it follows that
     \[ 0 \subseteq \Filt_A\{S_3\} \subseteq \Tcal'' \subseteq \Filt_A\{S_2,S_3\} \subseteq \Tcal' \subseteq \pi_{\uparrow} 0.\]
    In conclusion, the interval $[0, \pi_{\uparrow} 0]$ contains arrows labelled by $S_2$ which are not contracted by $\Phi_I$. Hence the equivalence class $[0]$ of $\Phi_I$ is not an interval, a contradiction to \cref{prop:quotlattiso}. Therefore $\Ext_A^1(S_2, S_1)=0$. \\

    In conclusion, we obtain $A \cong A/\langle \epsilon_1 \rangle \times A / \langle \epsilon_2 \rangle$, and clearly $\tors A / \langle \epsilon_2 \rangle \cong \tors A/I$ as required. 
\end{proof}

\section{Epimorphisms and lifting $\tau$-perpendicular intervals}\label{sec:epimorphisms}
In this section, let $A$ be $\tau$-tilting finite $K$-algebra, so that that $\tors A = \ftors A$ is finite. Let $I$ denote an ideal of $A$. Under this assumption it is possible to lift $\tau$-perpendicular intervals of $\tors A/I$ to $\tau$-perpendicular intervals of $\tors A$ in a more precise way than in the proof of \cref{prop:surjonobj}. This allows us to study the image of the functor $F_I$ better. Recall from  \cref{prop:quotlattiso} the isomorphism of lattices $\pi_{\uparrow}^{\Phi_I} (\tors A) \cong \tors A/I$. By a slight abuse of notation, given $\Tcal \in \tors A/I$ denote by $\pi_{\uparrow}^{\Phi_I} \Tcal$, or $\pi_{\uparrow} \Tcal$ for short, the element top element $\pi_{\uparrow}^{\Phi_I} \Phi_I^{-1}(\Tcal)$ of the preimage of $\Tcal$ under the lattice congruence $\Phi_I$. As a first step, consider the lemma which allows us to lift $\tau$-perpendicular intervals of $\tors A/I$ to $\tau$-perpendicular intervals of $\tors A$ explicitly.

\begin{lem} \label{prop:wideintinpreim}
   Let $I \in \ideal A$. For every $\tau$-perpendicular interval $[\Ucal, \Tcal] \subseteq \tors A/I$ there exists a $\tau$-perpendicular interval $[\Acal_{\Ucal}, \Bcal_{\Tcal}] \subseteq [\pi_{\uparrow} \Ucal, \pi_{\uparrow} \Tcal] \subseteq \tors A$. The interval is such that $\overline{[\Acal_{\Ucal}, \Bcal_{\Tcal}]} = [\Ucal, \Tcal]$ and satisfies $\Acal_{\Ucal}^{\perp_A} \cap \Bcal_{\Tcal} = \iota( \Ucal^{\perp_{A/I}} \cap \Tcal)$, where $\iota$ is the inclusion of \cref{eq:wideinclusion}. Moreover, it is given by
   \[ [\Acal_{\Ucal}, \Bcal_{\Tcal}] = [\pi_{\uparrow} \Ucal, \pi_{\uparrow} \Ucal \lor \Trm(\Wcal)], \]
   where $\Wcal = \iota(\Ucal^{\perp_{A/I}} \cap \Tcal)$.
\end{lem}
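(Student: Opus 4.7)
The plan is to construct $[\Acal_\Ucal, \Bcal_\Tcal]$ explicitly as a join-interval via \cref{prop:DIRRT420}. Since $[\Ucal, \Tcal]$ is $\tau$-perpendicular in $\tors A/I$, by \cref{prop:DIRRT420} it has the form $[\Ucal, \bigvee_{i=1}^r \Ucal_i]$, where the atoms $\Ucal \lessdot \Ucal_i$ in $\Hasse(\tors A/I)$ are labeled by bricks $B_i \in \mods A/I$ and $\Ucal^{\perp_{A/I}} \cap \Tcal = \Filt_{A/I}\{B_1, \ldots, B_r\}$. Then $\Wcal = \iota(\Ucal^{\perp_{A/I}} \cap \Tcal) = \Filt_A\{B_1, \ldots, B_r\}$ is $\tau$-perpendicular in $\mods A$ by $\tau$-tilting finiteness and \cite[Cor. 2.17]{IngallsThomas2009}. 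I will set $X_i \coloneqq \pi_\uparrow \Ucal \lor \Trm(B_i)$, where by \cref{lem:labelcoincide} the element $\Trm(B_i) \in \tors A$ is the completely join-irreducible labeled by $B_i$.

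The key step is to show $\pi_\uparrow \Ucal \lessdot X_i$ in $\Hasse(\tors A)$ for every $i$. Suppose for contradiction that $\pi_\uparrow \Ucal < Z < X_i$ for some $Z \in \tors A$. Applying $\overline{(-)}_I$ and using that it is a morphism of complete lattices together with $\overline{\Trm(B_i)}_I = \Trm_{A/I}(B_i)$, one obtains $\overline{Z}_I \in [\Ucal, \Ucal_i]$; since $\Ucal \lessdot \Ucal_i$, $\overline{Z}_I \in \{\Ucal, \Ucal_i\}$, and because $Z > \pi_\uparrow \Ucal$ is above the maximum of $\Phi_I^{-1}(\Ucal)$, this forces $\overline{Z}_I = \Ucal_i$. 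Since $\tors A$ is finite there exists a cover $\pi_\uparrow \Ucal \lessdot W$ with $W \leq Z$; by the same reasoning $\overline{W}_I = \Ucal_i$, so the cover $\pi_\uparrow \Ucal \lessdot W$ is not contracted and has brick label $B_i$ by \cref{thm:brickcontract}. Semidistributivity of $\tors A$ combined with \cref{lem:labelcoincide} then forces $W = \pi_\uparrow \Ucal \lor \Trm(B_i) = X_i$, contradicting $W \leq Z < X_i$. I expect this semidistributive cover-lifting argument to be the main technical obstacle.

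With the $X_i$ established as atoms above $\pi_\uparrow \Ucal$ in $\Hasse(\tors A)$, \cref{prop:DIRRT420} yields that $[\pi_\uparrow \Ucal, \bigvee_i X_i]$ is a $\tau$-perpendicular interval of $\tors A$. Expanding gives $\bigvee_i X_i = \pi_\uparrow \Ucal \lor \bigvee_i \Trm(B_i) = \pi_\uparrow \Ucal \lor \Trm(\Wcal)$, using that $\Trm(\Wcal) = \bigvee_i \Trm(B_i)$ is the smallest torsion class containing the semibrick $\{B_i\}$. This matches the formula $\Bcal_\Tcal = \pi_\uparrow \Ucal \lor \Trm(\Wcal)$. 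The brick labels of $[\pi_\uparrow \Ucal, \Bcal_\Tcal]$ are exactly $\{B_1, \ldots, B_r\}$, so by \cite[Lem. 3.10]{DIRRT2017} the associated wide subcategory is $\Filt_A\{B_1, \ldots, B_r\} = \Wcal = \iota(\Ucal^{\perp_{A/I}} \cap \Tcal)$, as required.

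Finally, I will verify the remaining claims. By definition $\overline{\Acal_\Ucal}_I = \overline{\pi_\uparrow \Ucal}_I = \Ucal$, and $\overline{\Bcal_\Tcal}_I = \bigvee_i \overline{X_i}_I = \bigvee_i \Ucal_i = \Tcal$, since $\overline{(-)}_I$ preserves joins and $X_i \in \Phi_I^{-1}(\Ucal_i)$ by the computation $\overline{X_i}_I = \Ucal \lor \Trm_{A/I}(B_i) = \Ucal_i$. Thus $\overline{[\Acal_\Ucal, \Bcal_\Tcal]}_I = [\Ucal, \Tcal]$, and since $\Bcal_\Tcal \in \Phi_I^{-1}(\Tcal)$ we obtain $\Bcal_\Tcal \leq \pi_\uparrow \Tcal$, establishing the containment $[\Acal_\Ucal, \Bcal_\Tcal] \subseteq [\pi_\uparrow \Ucal, \pi_\uparrow \Tcal]$.
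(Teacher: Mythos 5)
Your proposal is correct and follows essentially the same route as the paper: lift the semibrick generating $\Ucal^{\perp_{A/I}} \cap \Tcal$, show that the torsion classes $\pi_{\uparrow} \Ucal \lor \Trm_A(B_i)$ are covers of $\pi_{\uparrow}\Ucal$ labelled by the $B_i$, and apply \cref{prop:DIRRT420} to their join to get $[\pi_{\uparrow}\Ucal, \pi_{\uparrow}\Ucal \lor \Trm(\Wcal)]$ with wide subcategory $\iota(\Ucal^{\perp_{A/I}}\cap\Tcal)$. The only differences are local and harmless: you prove the cover-lifting step by an elementary argument (finiteness, congruence classes being intervals, the join-irreducible labelling) where the paper invokes the dual of \cite[Prop.~9-5.10]{Reading2016} together with \cref{prop:brickpreserve} and \cref{thm:brickcontract}, and you obtain $\overline{\Bcal_{\Tcal}} = \Tcal$ directly from join-preservation of $\overline{(-)}_I$ (plus the routine identity $\overline{\Trm_A(B_i)}_I = \Trm_{A/I}(B_i)$, which you should state as a one-line check) where the paper argues via \cref{cor:wideinquot} and \cref{thm:wideintiso}.
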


\begin{proof}
   Let $[\Ucal, \Tcal]$ be a $\tau$-perpendicular interval of $\tors A/I$ whose corresponding wide subcategory is given by $\Ucal^{\perp_{A/I}} \cap \Tcal = \Filt_{A/I}\{ S_1, \dots, S_k\} \in \wide A/I$ for some $1 \leq k \leq |A|$. By \cref{thm:Ringelbij}, it is generated by some semibrick $\{S_1, \dots, S_k\} \in \sbrick A$. This implies that 
   \begin{equation}\label{eq:sbrickcontained} \{S_1, \dots, S_k\} \subseteq \Ucal^{\perp_{A/I}} \cap \Tcal = (\pi_{\uparrow} \Ucal)^{\perp_{A}} \cap \pi_{\uparrow} \Tcal \cap \mods A/I \subseteq (\pi_{\uparrow} \Ucal)^{\perp_A} \cap \pi_{\uparrow} \Tcal \end{equation}
   by using \cref{cor:wideinquot} to obtain the equality. Since $(\pi_{\uparrow} \Ucal)^{\perp_A}$ and $\pi_{\uparrow} \Tcal$ are a torsion-free and a torsion class of $\mods A$, respectively, they are closed under extensions. As a consequence the lifted $\tau$-perpendicular subcategory $\Wcal = \Filt_A\{S_1, \dots, S_k\} \in \wide A$ satisfies $\Wcal \subseteq (\pi_{\uparrow} \Ucal)^{\perp_A} \cap \pi_{\uparrow} \Tcal$, as $\Wcal = \Filt_A\{S_1, \dots, S_k\}$ consists of iterated extensions of modules contained in $(\pi_{\uparrow} \Ucal)^{\perp_A} \cap \pi_{\uparrow} \Tcal$. \\

   By the dual of \cite[Prop. 9-5.10]{Reading2016} and \cref{prop:quotlattiso}, the intersection with $\mods A/I$ induces a bijection from the elements $\Vcal \in \tors A$ covering $\pi_{\uparrow} \Ucal$ in $\tors A$ to the elements $\Vcal'$ covering $\Ucal$ in $\tors A/I$. Thus, using \cref{prop:brickpreserve} and \cref{thm:brickcontract}, there is a brick-label preserving bijection between arrows
   \begin{equation}\label{eq:temporaryarrows} 
	\begin{aligned}
   &\left \{ \pi_{\uparrow} \Ucal \lor \Trm_A(S_i) \xrightarrow{S_i} \pi_{\uparrow} \Ucal \text{ in } \Hasse([\pi_{\uparrow} \Ucal, \pi_{\uparrow} \Tcal]) \subseteq \Hasse(\tors A) \right \}_{i=1}^k  \\
   &\longleftrightarrow \left \{ \Ucal \lor \Trm_{A/I}(S_i) \xrightarrow{S_i} \Ucal \text{ in } \Hasse([\Ucal, \Tcal]) \subseteq \Hasse(\tors A/I) \right \}_{i=1}^k,
   \end{aligned}
   \end{equation}
   which are labelled by the bricks $\{S_1, \dots, S_k\}$ generating $\Ucal^{\perp_A} \cap \Tcal$ by \cite[Thm. 4.16]{DIRRT2017}.\\

It is easy to see that $\Trm_A(\Wcal) = \Trm_A(S_1) \lor \dots \lor \Trm_A(S_k)$, so that \cref{prop:DIRRT420} gives the following $\tau$-perpendicular interval by taking the join of all atoms:
   \[ [\Acal_\Ucal, \Bcal_{\Tcal}] \coloneqq [\pi_{\uparrow} \Ucal, \pi_{\uparrow} \Ucal \lor \Trm(\Wcal)] \subseteq \tors A. \]
   By \cite[Thm. 4.16]{DIRRT2017} this intervals satisfies $\Acal^{\perp_A} \cap \Bcal_\Tcal = \Wcal$. Since both $\Wcal \subseteq \pi_{\uparrow} \Tcal$ and $\pi_{\uparrow} \Ucal \subseteq \pi_{\uparrow} \Tcal$, the interval is such that $\overline{[\Acal_\Ucal, \Bcal_{\Tcal}]} \subseteq [\Ucal, \Tcal] \subseteq \tors A/I$. By \cref{cor:wideinquot} we have
   \[\overline{\Acal}_\Ucal^{\perp_{A/I}} \cap \overline{\Bcal}_\Tcal = \Filt_A \{ S_1, \dots, S_k\} \cap \mods A/I = \Filt_{A/I} \{S_1, \dots, S_k\}. \]
   Hence $\overline{\Acal}_\Ucal^{\perp_{A/I}} \cap \overline{\Bcal}_\Tcal = \Ucal^{\perp_{A/I}} \cap \Tcal$ and by \cref{thm:wideintiso} there is a lattice isomorphism between $\overline{[\Acal_{\Ucal}, \Bcal_{\Tcal}]}$ and $[\Ucal, \Tcal]$. Since $\overline{[\Acal_{\Ucal}, \Bcal_{\Tcal}]} \subseteq [\Ucal, \Tcal]$ it follows that $\overline{[\Acal_{\Ucal}, \Bcal_{\Tcal}]} = [\Ucal, \Tcal]$. 
\end{proof}

Using \cref{prop:wideintinpreim}, define a map of $\tau$-perpendicular intervals:
\begin{equation}\label{eq:liftingintervals}
    \begin{aligned}
        \ifrak: \tauint(\tors A/I) &\to \tauint(\tors A)\\
        [\Ucal, \Tcal] &\mapsto \ifrak [\Ucal, \Tcal] = [\pi_{\uparrow} \Ucal, \pi_{\uparrow} \Ucal \lor \Trm(\Wcal)].
    \end{aligned}
\end{equation}

\begin{exmp}\label{rmk:noninclusion}
    The map of $\tau$-perpendicular intervals $\ifrak: \tauint(\tors A/I) \to \tauint(A)$ of \cref{eq:liftingintervals} is not inclusion-preserving. For example, take the surjective algebra morphism $A \cong K(1 \to 2) \twoheadrightarrow K^2$. Then the inclusion $[\Fac(\begin{smallmatrix} 1 \end{smallmatrix}), \Fac (\begin{smallmatrix} 1 \end{smallmatrix})] \subseteq [0, \Fac( \begin{smallmatrix} 1 \end{smallmatrix})]$ of $\tau$-perpendicular intervals of $\tors K^2$ maps to $[\Fac( \begin{smallmatrix} 1\\2 \end{smallmatrix}),\Fac( \begin{smallmatrix} 1\\2 \end{smallmatrix})] \not \subseteq [0, \Fac (\begin{smallmatrix}1 \end{smallmatrix})]$.
\end{exmp}

Nonetheless, there is a way of resolving this problem by restricting the lattice congruence to the desired interval.

\begin{prop}\label{prop:essentialimage}
    For every inclusion of $\tau$-perpendicular intervals $[\Ucal, \Tcal] \subseteq [\Vcal, \Scal]$ in $\tors A/I$, there exists a $\tau$-perpendicular interval $\overline{\ifrak}_{\Vcal}^{\Scal} [\Ucal, \Tcal] \subseteq \ifrak[\Vcal, \Scal]$ such that $(\overline{\ifrak}_{\Vcal}^{\Scal} [\Ucal, \Tcal]) \cap \mods A/I= [\Ucal, \Tcal]$ and its corresponding wide subcategory is $\iota(\Ucal^{\perp_{A/I}} \cap \Tcal)$. The intervals are such that  
    \[ F_I([f_{(\ifrak[\Vcal, \Scal])(\overline{\ifrak}_{\Vcal}^{\Scal} [\Ucal, \Tcal])}]) = [f_{[\Vcal, \Scal][\Ucal, \Tcal]}],\]
    consequently every morphism of $\Tfrak(A/I)$ lies in the essential image of $F_I: \Tfrak(A) \to \Tfrak(A/I)$. 
\end{prop}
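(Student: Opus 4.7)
The strategy is to carry out the construction of Lemma \ref{prop:wideintinpreim}, but relative to the $\tau$-perpendicular interval $\ifrak[\Vcal, \Scal] \subseteq \tors A$ rather than the whole of $\tors A$. By Lemma \ref{prop:wideintinpreim}, $\ifrak[\Vcal, \Scal]$ is $\tau$-perpendicular with corresponding wide subcategory $\Wcal \coloneqq \iota(\Vcal^{\perp_{A/I}} \cap \Scal)$. Since $[\Ucal, \Tcal] \subseteq [\Vcal, \Scal]$ forces $\Ucal^{\perp_{A/I}} \cap \Tcal \subseteq \Vcal^{\perp_{A/I}} \cap \Scal$, the corresponding semibricks nest, so that $\Wcal' \coloneqq \iota(\Ucal^{\perp_{A/I}} \cap \Tcal) \subseteq \Wcal$ inside $\mods A$. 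Because $A$ is $\tau$-tilting finite, so too is the algebra whose module category is equivalent to $\Wcal$; hence $\Wcal'$ is automatically $\tau$-perpendicular inside $\Wcal$ by \cite[Cor. 2.17]{IngallsThomas2009}.

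Next, via the lattice isomorphism $\ifrak[\Vcal, \Scal] \cong \tors \Wcal$ of Theorem \ref{thm:wideintiso}, the $\tau$-perpendicular wide subcategory $\Wcal'$ of $\Wcal$ corresponds to a unique $\tau$-perpendicular interval of $\tors \Wcal$, whose pullback to $\tors A$ defines the desired interval $\overline{\ifrak}_\Vcal^\Scal [\Ucal, \Tcal] \subseteq \ifrak[\Vcal, \Scal]$. Writing $\pi_\uparrow^{|}$ for the top-lift associated to the restriction of the complete lattice congruence $\Phi_I$ to $\ifrak[\Vcal, \Scal]$ (which is itself a complete lattice congruence, so that Proposition \ref{prop:quotlattiso} applies), one can give the concrete description
\[ \overline{\ifrak}_\Vcal^\Scal [\Ucal, \Tcal] = [\pi_\uparrow^{|} \Ucal,\ \pi_\uparrow^{|} \Ucal \lor \Trm(\Wcal')], \]
in direct analogy with Lemma \ref{prop:wideintinpreim}.

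For the verification, the identity $(\overline{\ifrak}_\Vcal^\Scal [\Ucal, \Tcal]) \cap \mods A/I = [\Ucal, \Tcal]$ follows from the commutative diagram of Proposition \ref{wideintisoquot}: under the lattice isomorphism $\ifrak[\Vcal,\Scal] \cong \tors \Wcal$, intersection of intervals with $\mods A/I$ corresponds to intersection of wide subcategories with $\mods A/I$, and $\Wcal' \cap \mods A/I = \Ucal^{\perp_{A/I}} \cap \Tcal$ holds by definition of $\iota$, which is precisely the wide subcategory determining $[\Ucal, \Tcal]$. That the wide subcategory associated to $\overline{\ifrak}_\Vcal^\Scal [\Ucal, \Tcal]$ is $\Wcal'$ is immediate by construction. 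Since $F_I$ is induced by $(-) \cap \mods A/I$ on intervals (Theorem \ref{thm:inducedfunctor}), the image of $[f_{(\ifrak[\Vcal, \Scal])(\overline{\ifrak}_\Vcal^\Scal [\Ucal, \Tcal])}]$ under $F_I$ equals $[f_{[\Vcal, \Scal][\Ucal, \Tcal]}]$, and the essential-image statement for morphisms follows.

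The main obstacle I anticipate is carefully verifying the containment $\overline{\ifrak}_\Vcal^\Scal [\Ucal, \Tcal] \subseteq \ifrak[\Vcal, \Scal]$ and checking that the top-lift $\pi_\uparrow^{|} \Ucal$, computed inside $\ifrak[\Vcal, \Scal]$, really matches the element forced by the commutative diagram of Proposition \ref{wideintisoquot}. The containment reduces to $\Trm_A(\Wcal') \leq \pi_\uparrow \Vcal \lor \Trm_A(\Wcal)$, which is clear from $\Wcal' \subseteq \Wcal$; the identification of the top-lifts is a careful bookkeeping with the restricted congruence, but conceptually routine once the diagram in Proposition \ref{wideintisoquot} is in place.
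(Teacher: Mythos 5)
Your proposal takes essentially the same route as the paper: the paper also restricts the congruence $\Phi_I$ to the lifted interval $\ifrak[\Vcal,\Scal]$, defines the sub-lift concretely as $[(\overline{\pi}|_{\Vcal}^{\Scal})_{\uparrow}\Ucal,\ (\overline{\pi}|_{\Vcal}^{\Scal})_{\uparrow}\Ucal \lor \Trm(\iota(\Ucal^{\perp_{A/I}}\cap\Tcal))]$, and then verifies the containment in $\ifrak[\Vcal,\Scal]$ and that intersecting with $\mods A/I$ returns exactly $[\Ucal,\Tcal]$ (via the label-preserving cover bijection and the isomorphism of \cref{thm:wideintiso}), exactly as you sketch. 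The only imprecision is the phrase that $\Wcal'$ ``corresponds to a unique $\tau$-perpendicular interval of $\tors\Wcal$'' (many intervals share the same wide subcategory), but since your explicit top-lift formula pins down the intended interval, this does not affect the argument.
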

\begin{proof}
    Let $\Ucal, \Vcal, \Scal, \Tcal \in \tors A/I$ be such that $[\Ucal, \Tcal] \subseteq [\Vcal, \Scal]$ and both are $\tau$-perpendicular intervals of $\tors A/I$. By definition of $\Tfrak(A/I)$ every non-zero morphism in $\Tfrak(A/I)$ is of the form $[f_{[\Vcal, \Scal][\Ucal, \Tcal]}] \in \Tfrak(A/I)$. Consider the representative $f_{[\Vcal, \Scal][\Ucal,\Tcal]}$ of its equivalence class. Using the map $\ifrak$ of \cref{eq:liftingintervals} we obtain a $\tau$-perpendicular interval $\ifrak[\Vcal, \Scal]$ of $\tors A$ such that $F_I((\ifrak[\Vcal, \Scal])_\sim)=[\Vcal, \Scal]_\sim$. However, the $\tau$-perpendicular interval $\ifrak[\Ucal, \Tcal]$ may not be contained in $\ifrak[\Vcal, \Scal]$, see \cref{rmk:noninclusion}. \\

    Nonetheless, we may consider the restriction of the lattice congruence $\Phi \coloneqq \Phi_I$ to the interval lattice $\ifrak[\Vcal, \Scal]$, which we denote by $\overline{\Phi}|_{\Vcal}^{\Scal}$. By the dual of \cite[Lem. 9-5.7]{Reading2016}, the interval $\overline{[\Vcal, \Scal]} = [\Vcal, \Scal]$ of $\tors A/I$, which contains $[\Ucal, \Tcal]$, is isomorphic to the quotient lattice $\ifrak[\Vcal, \Scal]/(\Phi|_{\Vcal}^{\Scal})$. Let $\Wcal \coloneqq \iota(\Ucal^{\perp_{A/I}} \cap \Tcal)$ and let $\{S_1, \dots, S_k\}$ be the relative simple modules of $\Ucal^{\perp_{A/I}} \cap \Tcal$, so that $\Wcal= \Filt_A\{S_1, \dots, S_k\}$ by \cref{thm:Ringelbij}.\\
    
    Restricting \cref{prop:quotlattiso} to $[\Vcal, \Scal]$, we find that the preimage of $\Ucal$ under $\overline{\Phi}|_{\Vcal}^{\Scal}$ is an interval of $[\Vcal, \Scal]$ which we denote by $[(\overline{\pi}|_{\Vcal}^{\Scal})_{\downarrow} (\Ucal), (\overline{\pi}|_{\Vcal}^{\Scal})_{\uparrow} (\Ucal)] \subseteq \ifrak[\Vcal, \Scal]$. Let $\Ucal' = (\overline{\pi}|_{\Vcal}^{\Scal})_{\uparrow} (\Ucal) \in \tors A$. Similar to the proof of \cref{prop:wideintinpreim}, we apply the dual of \cite[Prop. 9-5.10]{Reading2016} and \cref{prop:brickpreserve} to obtain a brick-label preserving bijection
    \begin{equation}\label{eq:temporaryarrows2} 
	\begin{aligned}
       &\left \{ \Ucal' \lor \Trm_A(S_i) \xrightarrow{S_i}  \Ucal' \text{ in } \Hasse([(\overline{\pi}|_{\Vcal}^{\Scal})_{\uparrow} \Ucal, (\overline{\pi}|_{\Vcal}^{\Scal})_{\uparrow} \Tcal ]) \subseteq \Hasse(\ifrak[\Vcal, \Scal]) \right \}_{i=1}^k  \\
       &\longleftrightarrow \left \{ \Ucal \lor \Trm_{A/I}(S_i) \xrightarrow{S_i} \Ucal \text{ in } \Hasse([\Ucal, \Tcal]) \subseteq \Hasse([\Vcal, \Scal]) \right \}_{i=1}^k,
       \end{aligned}
       \end{equation}
       which are labelled by the bricks $\{S_1, \dots, S_k\}$ generating $\Ucal^{\perp_{A/I}} \cap \Tcal$ by \cite[Thm. 4.16]{DIRRT2017}. Since $\Tfrak(\Wcal_1) = \bigvee_{i=1}^k \Tfrak(S_i)$ it follows from \cref{prop:DIRRT420} that
       \begin{equation}\label{eq:2nditvlift}
           \overline{\ifrak}_\Vcal^{\Scal}[\Ucal, \Tcal] \coloneqq [(\overline{\pi}_{\Vcal}^{\Scal})_\uparrow (\Ucal), (\overline{\pi}_{\Vcal}^{\Scal})_\uparrow (\Ucal) \lor \Trm(\Wcal)]
       \end{equation} 
       is a $\tau$-perpendicular interval of $\tors A$ such that the corresponding wide subcategory is $\Wcal$ by \cite[Thm. 4.16]{DIRRT2017}. Write $\Tcal' = (\overline{\phi}_{\Vcal}^{\Scal})_\uparrow \Tcal$. Because $(\overline{\Phi}|_{\Vcal}^{\Scal})^{-1}(\Ucal)\subseteq \Phi^{-1}(\Ucal)$ and $(\overline{\Phi}|_{\Vcal}^{\Scal})^{-1}(\Tcal)\subseteq \Phi^{-1}(\Tcal)$, we have that $\overline{\Ucal'} = \Ucal$ and $\overline{\Tcal'} = \Tcal$. It follows from  \cref{cor:wideinquot} that therefore $(\overline{\Ucal'})^{\perp_A} \cap \Tcal' \cap \mods A/I = \Ucal^{\perp_{A/I}} \cap \Tcal$. Now the argument following \cref{eq:sbrickcontained} in the proof of \cref{prop:wideintinpreim} applies to give $\Wcal \subseteq \Tcal'$. Since furthermore $\Ucal' \subseteq \Tcal'$, it follows that $(\overline{\ifrak}_{\Vcal}^{\Scal} [\Ucal, \Tcal]) \cap \mods A/I \subseteq [\Ucal, \Tcal]$. \\
       
       Again, analogous to the proof of \cref{prop:wideintinpreim}, it follows from \cref{thm:wideintiso} and \cref{cor:wideinquot} that actually $(\overline{\ifrak}_{\Vcal}^{\Scal} [\Ucal, \Tcal]) \cap \mods A/I = [\Ucal, \Tcal]$ because there exists a lattice isomorphism between the two. To complete the proof we show that $\Ucal' \lor \Trm(\Wcal)  \subseteq \pi_{\uparrow} \Vcal \lor \Trm(\iota(\Vcal^{\perp_{A/I}} \cap \Scal))$. Indeed,
       \[ \Ucal' \lor \Trm(\Wcal) \subseteq \Ucal' \lor \Trm(\iota(\Vcal^{\perp_{A/I}} \cap \Scal)) \subseteq \pi_{\uparrow} \Vcal \lor \Trm(\iota(\Vcal^{\perp_{A/I}} \cap \Scal)),\]
       where the first inequality follows since $\Ucal^{\perp_{A/I}} \cap \Tcal \subseteq \Vcal^{\perp_{A/I}} \cap \Scal$ lifts to $\mods A$ and the second inequality follows from the same observation and the fact that $\Ucal' \subseteq \pi_\uparrow \Vcal \lor \Trm(\iota(\Vcal^{\perp_{A/I}} \cap \Scal))$ by construction. In conclusion, given an inclusion of $\tau$-perpendicular intervals $[\Ucal, \Tcal] \subseteq [\Vcal, \Scal]$ in $\tors A/I$, there exists an inclusion of $\tau$-perpendicular intervals
       \[ (\overline{\ifrak}_{\Vcal}^{\Scal}[\Ucal, \Tcal]) \subseteq \ifrak[\Vcal, \Scal]\]
       such that $(\overline{\ifrak}_{\Vcal}^{\Scal}[\Ucal, \Tcal]) \cap \mods A/I = [\Ucal, \Tcal]$ and $\ifrak[\Vcal, \Scal]\cap \mods A/I = [\Vcal, \Scal]$. Consequently, $F_I((\overline{\ifrak}_{\Vcal}^{\Scal}[\Ucal, \Tcal])_\sim) = [\Ucal, \Tcal]_\sim$ and $F_I((\ifrak[\Vcal, \Scal])_\sim) = [\Vcal, \Scal]_\sim$. In conclusion any morphism $[f_{[\Vcal, \Scal][\Ucal, \Tcal]}] \in \Tfrak(A/I)$ may be obtained by applying $F_I$ to $[f_{(\ifrak[\Vcal, \Scal])(\overline{\ifrak}_{\Vcal}^{\Scal}[\Ucal, \Tcal])}].$
\end{proof}

Recall that an epimorphism $e$ in a category $\Ccal$ is called \textit{extremal} if whenever one can write $e= m \circ f$, with $m$ a monomorphism, then $m$ is an isomorphism. Let $\mathcal{C}\mathrm{at}$ denote the category of small categories.

\begin{cor}\label{cor:extremalepi}
    The smallest subcategory of $\Tfrak(A/I)$ containing the image of $F_I$ is $\Tfrak(A/I)$ itself. Thus $F_I$ is an extremal epimorphism in the category $\mathcal{C}\mathrm{at}$.
\end{cor}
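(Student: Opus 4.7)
The plan is to use the two preceding results \cref{prop:surjonobj} and \cref{prop:essentialimage} in combination: the first says that every object of $\Tfrak(A/I)$ arises as $F_I([\Ucal, \Tcal]_\sim)$ on the nose, and the second produces, for every morphism $[f_{[\Vcal, \Scal][\Ucal, \Tcal]}]$ of $\Tfrak(A/I)$, an explicit preimage morphism $[f_{(\ifrak[\Vcal, \Scal])(\overline{\ifrak}_\Vcal^\Scal[\Ucal, \Tcal])}]$ in $\Tfrak(A)$. Together these say that the image of $F_I$, considered as a subcollection of objects together with morphisms between them, already contains every object and every morphism of $\Tfrak(A/I)$. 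Since any subcategory closed under composition and identities that contains all objects and morphisms must be the whole category, the smallest subcategory of $\Tfrak(A/I)$ containing the image of $F_I$ is $\Tfrak(A/I)$ itself. This disposes of the first sentence.

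For the second sentence, I would first recall the standard characterisation that a monomorphism in $\mathcal{C}\mathrm{at}$ is precisely an injective-on-objects and faithful functor. Suppose then that $F_I$ factors as $F_I = m \circ f$ with $m: \mathcal{D} \to \Tfrak(A/I)$ a monomorphism in $\mathcal{C}\mathrm{at}$; I want to conclude that $m$ is an isomorphism, i.e. bijective on objects and fully faithful. Bijectivity on objects is immediate: $m$ is injective on objects by assumption, and $m$ is surjective on objects because $F_I = m \circ f$ is by \cref{prop:surjonobj}.

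For fullness, the key step is to transport a morphism $\psi: m(X) \to m(Y)$ of $\Tfrak(A/I)$ back into $\mathcal{D}$. By the first part of the corollary, $\psi = F_I(\varphi) = m(f(\varphi))$ for some morphism $\varphi$ of $\Tfrak(A)$. Then $m(f(\mathrm{src}\,\varphi)) = m(X)$ and $m(f(\mathrm{tgt}\,\varphi)) = m(Y)$, and since $m$ is injective on objects this forces $f(\mathrm{src}\,\varphi) = X$ and $f(\mathrm{tgt}\,\varphi) = Y$. Hence $f(\varphi): X \to Y$ is a genuine morphism of $\mathcal{D}$ with $m(f(\varphi)) = \psi$, showing $m$ is full. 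Combined with faithfulness (built into $m$ being a monomorphism) and the bijection on objects, $m$ is an isomorphism of categories. The main conceptual point I expect to be delicate is making sure I am invoking the correct characterisation of monomorphisms in $\mathcal{C}\mathrm{at}$; everything else is a clean diagram chase. Checking that $F_I$ is at least an epimorphism (should one prefer a definition of extremal epi that presupposes this) follows by exactly the same argument applied to a pair $g_1, g_2: \Tfrak(A/I) \to \Ccal$ with $g_1 F_I = g_2 F_I$: surjectivity of $F_I$ on objects and on morphisms forces $g_1 = g_2$.
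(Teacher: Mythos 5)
Your argument is correct, and the first half coincides with the paper's: the paper deduces from \cref{prop:essentialimage} (whose construction puts every morphism of $\Tfrak(A/I)$ literally in the image of $F_I$, not merely in its essential image) that the image is all of $\Tfrak(A/I)$, exactly as you do by combining \cref{prop:surjonobj} and \cref{prop:essentialimage}. Where you diverge is the second sentence: the paper simply invokes the characterisation of extremal epimorphisms in $\mathcal{C}\mathrm{at}$ from \cite[Thm.~3.4]{BBP1999} (a functor is an extremal epi precisely when the smallest subcategory of the codomain containing its image is the whole codomain), whereas you give a self-contained argument, using that monomorphisms in $\mathcal{C}\mathrm{at}$ are exactly the functors injective on objects and faithful, and then checking that any such $m$ through which $F_I$ factors is bijective on objects and fully faithful, hence an isomorphism; you also verify separately that $F_I$ is an epimorphism. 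Your mono characterisation is the standard one and the diagram chase is sound, so the proof goes through; note only that your route leans on the stronger fact that $F_I$ is surjective on objects \emph{and} on morphisms on the nose (true here by \cref{prop:surjonobj} and \cref{prop:essentialimage}), while the cited theorem would also cover the weaker situation where the image merely generates the codomain. In short: the paper's proof is shorter but outsources the categorical content to \cite{BBP1999}; yours is longer but elementary and independent of that reference.
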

\begin{proof}
    It follows immediately from \cref{prop:essentialimage} that the image of $F_I$ is $\Tfrak(A/I)$ itself. The functor $F_I$ of \cref{thm:inducedfunctor} is an extremal epimorphism in $\mathcal{C}$at by \cite[Thm. 3.4]{BBP1999}.
\end{proof}

Moreover, we say that a functor $G: \Acal \to \Bcal$ \textit{reflects composition} if given two morphisms $f$ and $g$ in $\Acal$ such that $G(f) \circ G(g)$ is defined in $\Bcal$, there exist morphisms $f'$ and $g'$ in $\Acal$ such that $F(f) = G(f')$, $G(g) = G(g')$ and $f' \circ g'$ is defined in $\Bcal$.

\begin{lem}\label{lem:reflectcomp}
    The functor $F_I: \Tfrak(A) \to \Tfrak(A/I)$ reflects composition. Moreover, if $[f_{[\Ucal, \Tcal][\Xcal, \Ycal]}] \circ [f_{[\Vcal, \Scal][\Ucal', \Tcal']}]$ is defined in $\Tfrak(A/I)$ then the composition 
    \[ [f_{(\ifrak[\Ucal, \Tcal])(\overline{\ifrak}_{\Ucal}^{\Tcal}[\Xcal, \Ycal])}] \circ [f_{(\ifrak[\Vcal, \Scal])(\overline{\ifrak}_{\Vcal}^{\Scal}[\Ucal', \Tcal'])}] \]
    is defined in $\Tfrak(A)$, where the maps of $\tau$-perpendicular intervals are as in \cref{eq:liftingintervals} and \cref{eq:2nditvlift}.
\end{lem}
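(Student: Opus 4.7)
The plan is to establish the ``moreover'' clause directly from the two lifting constructions already in place, and then deduce the abstract reflects-composition property as a formal consequence. The composability in $\Tfrak(A/I)$ of the given pair of morphisms amounts to the identity $[\Ucal', \Tcal']_\sim = [\Ucal, \Tcal]_\sim$ in $\Tfrak(A/I)$, which by the definition of $\sim$ means $(\Ucal')^{\perp_{A/I}} \cap \Tcal' = \Ucal^{\perp_{A/I}} \cap \Tcal$ as wide subcategories of $\mods A/I$. I will use this single equality to pin down the key comparison.

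The main step is to verify that the two candidate lifts actually compose in $\Tfrak(A)$. The target of $[f_{(\ifrak[\Vcal, \Scal])(\overline{\ifrak}_{\Vcal}^{\Scal}[\Ucal', \Tcal'])}]$ is $(\overline{\ifrak}_{\Vcal}^{\Scal}[\Ucal', \Tcal'])_\sim$, and the source of $[f_{(\ifrak[\Ucal, \Tcal])(\overline{\ifrak}_{\Ucal}^{\Tcal}[\Xcal, \Ycal])}]$ is $(\ifrak[\Ucal, \Tcal])_\sim$. By \cref{prop:essentialimage}, the wide subcategory corresponding to $\overline{\ifrak}_{\Vcal}^{\Scal}[\Ucal', \Tcal']$ is $\iota((\Ucal')^{\perp_{A/I}} \cap \Tcal')$, and by \cref{prop:wideintinpreim}, the wide subcategory corresponding to $\ifrak[\Ucal, \Tcal]$ is $\iota(\Ucal^{\perp_{A/I}} \cap \Tcal)$. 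The composability assumption therefore forces these two wide subcategories to coincide, hence the two intervals are equivalent under $\sim$ in $\Tfrak(A)$, and the composition is defined. This is the only non-formal point in the argument; everything else is bookkeeping against the definitions of $\ifrak$ in \cref{eq:liftingintervals} and $\overline{\ifrak}$ in \cref{eq:2nditvlift} and the images under $F_I$ computed in \cref{prop:essentialimage}.

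Finally, to obtain the abstract reflection of composition, take any $f, g \in \Tfrak(A)$ with $F_I(f) \circ F_I(g)$ defined in $\Tfrak(A/I)$, and write out $F_I(g) = [f_{[\Vcal, \Scal][\Ucal', \Tcal']}]$ and $F_I(f) = [f_{[\Ucal, \Tcal][\Xcal, \Ycal]}]$ in the standard form. Applying the ``moreover'' clause to this pair yields morphisms $g', f' \in \Tfrak(A)$ that are composable and satisfy $F_I(g') = F_I(g)$ and $F_I(f') = F_I(f)$ by construction, so $F_I$ reflects composition. I expect the main subtlety to be checking that the description of the associated wide subcategories extracted from \cref{prop:wideintinpreim} and \cref{prop:essentialimage} is consistent across the two lifting constructions; once that agreement is observed, the proof collapses to the single equality of wide subcategories noted in the first paragraph.
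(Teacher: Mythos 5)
Your proposal is correct and follows essentially the same route as the paper: both arguments reduce composability in $\Tfrak(A)$ to the equality of the wide subcategories $\iota(\Ucal^{\perp_{A/I}} \cap \Tcal)$ attached to $\ifrak[\Ucal,\Tcal]$ (via \cref{prop:wideintinpreim}) and to $\overline{\ifrak}_{\Vcal}^{\Scal}[\Ucal',\Tcal']$ (via \cref{prop:essentialimage}), which the composability hypothesis in $\Tfrak(A/I)$ forces, and then read off the abstract reflects-composition property from $F_I$ applied to the lifted morphisms. The only cosmetic difference is that the paper first normalises $\Ucal=\Ucal'$, $\Tcal=\Tcal'$ using \cref{lem:composition1}, whereas you work directly with the equality of the corresponding wide subcategories, which amounts to the same thing.
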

\begin{proof}
    By \cref{lem:composition1} we may assume $\Ucal = \Ucal'$ and $\Tcal = \Tcal'$. From \cref{prop:wideintinpreim} we know that the interval $\ifrak[\Ucal,\Tcal] \subseteq \tors A$ corresponds to the wide subcategory $\iota(\Ucal^{\perp_{A/I)}} \cap \Tcal) \in \wide A$. On the other hand, from \cref{prop:essentialimage} we know that the interval $\overline{\ifrak}_{\Vcal}^{\Scal}[\Ucal, \Tcal]$ corresponds to the wide subcategory $\iota(\Ucal^{\perp_{A/I)}} \cap \Tcal)$ as well. Therefore $(\ifrak_{\Vcal}^{\Scal}[\Ucal, \Tcal])_\sim = (\ifrak[\Ucal, \Tcal])_\sim$ in $\Tfrak(A)$. It follows from \cref{lem:composition1} that the composition is defined in $\Tfrak(A)$. It is clear from \cref{prop:essentialimage} that the functor $F_I$ sends this composition in $\Tfrak(A)$ to the desired one in $\Tfrak(A/I)$.
\end{proof}

\begin{cor}\label{cor:regularepi}
    The functor $F_I: \Tfrak(A) \to \Tfrak(A/I)$ is a regular epimorphism in $\mathcal{C}$at, that is, it is the coequaliser of a pair of morphisms in $\mathcal{C}\mathrm{at}$.
\end{cor}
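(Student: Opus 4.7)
The plan is to realise $F_I$ as the coequaliser of its own kernel pair. Form the pullback $R := \Tfrak(A) \times_{F_I} \Tfrak(A)$ in $\mathcal{C}\mathrm{at}$, with projection functors $\pi_1,\pi_2: R \rightrightarrows \Tfrak(A)$; by construction $F_I \circ \pi_1 = F_I \circ \pi_2$. Let $q: \Tfrak(A) \to Q$ be the coequaliser of $\pi_1,\pi_2$ in $\mathcal{C}\mathrm{at}$. The universal property produces a unique comparison functor $\bar{F}_I: Q \to \Tfrak(A/I)$ with $\bar{F}_I \circ q = F_I$; the claim reduces to showing that $\bar{F}_I$ is an isomorphism of categories, whence $F_I$ itself is the coequaliser (up to isomorphism) of $\pi_1,\pi_2$.

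Concretely, the objects of $Q$ are classes of objects of $\Tfrak(A)$ under the relation $X \sim Y \Leftrightarrow F_I(X) = F_I(Y)$, and its morphisms are generated (and quotiented by composition) from the classes $[f]$ where $f \sim g \Leftrightarrow F_I(f) = F_I(g)$. Bijectivity of $\bar{F}_I$ on objects is immediate: surjectivity follows from \cref{prop:surjonobj} and injectivity is built into the definition of $Q$. Surjectivity of $\bar{F}_I$ on morphisms follows from \cref{prop:essentialimage}: every morphism of $\Tfrak(A/I)$ is of the form $F_I(\tilde{f})$ for some $\tilde{f}$ in $\Tfrak(A)$, hence equals $\bar{F}_I(q(\tilde{f}))$.

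The delicate step is faithfulness of $\bar{F}_I$. The subtlety is that an abstract morphism of $Q$ from $q(X)$ to $q(Y)$ is a priori a formal composite $q(f_1) \circ \cdots \circ q(f_n)$ where consecutive source/target pairs need only be identified after passing to $Q$ (not in $\Tfrak(A)$ itself), so it is not obvious that each such composite is represented by a single morphism of $\Tfrak(A)$. This is precisely where \cref{lem:reflectcomp} is used: any composable pair in $\Tfrak(A/I)$ lifts to a composable pair in $\Tfrak(A)$ (after replacing representatives by $\sim$-equivalent ones), and iterating this shows that every formal composite in $Q$ equals the class of a single morphism $h \in \Tfrak(A)$. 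Two such classes $[h_1], [h_2]$ are then equal in $Q$ iff $F_I(h_1) = F_I(h_2)$, which is exactly the condition $\bar{F}_I([h_1]) = \bar{F}_I([h_2])$. Hence $\bar{F}_I$ is faithful, therefore an isomorphism, and $F_I$ is a regular epimorphism in $\mathcal{C}\mathrm{at}$.

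The main obstacle is the composition-lifting step, and the reason it goes through here (rather than for a general extremal-epi functor, cf. \cref{cor:extremalepi}) is the composition-reflection property established in \cref{lem:reflectcomp}; without it the morphisms of $Q$ could genuinely be longer zigzags than single morphisms of $\Tfrak(A)$, obstructing faithfulness of $\bar{F}_I$.
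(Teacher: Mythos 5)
Your proof is correct, but it takes a different route from the paper. The paper's own argument is a two-line citation: by \cite[Prop.~5.1]{BBP1999} an extremal epimorphism in $\mathcal{C}\mathrm{at}$ that reflects composition is regular, and these two hypotheses are exactly \cref{cor:extremalepi} and \cref{lem:reflectcomp}. You instead exhibit $F_I$ directly as the coequaliser of its kernel pair: bijectivity of the comparison functor on objects comes from \cref{prop:surjonobj} together with the definition of the kernel-pair relation, surjectivity on morphisms from \cref{prop:essentialimage}, and the key faithfulness step -- collapsing a formal composite in the quotient to the class of a single morphism of $\Tfrak(A)$ -- from \cref{lem:reflectcomp}, which is indeed the same ingredient that powers the paper's citation. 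Your version is more self-contained and makes transparent that the congruence induced by the kernel pair is precisely the fibre relation of $F_I$ (so it does not need extremality at all), whereas the paper's version is shorter and delegates the categorical bookkeeping. One point to tighten: your description of the coequaliser $Q$ in $\mathcal{C}\mathrm{at}$ -- objects as classes of objects and morphisms as classes of sequences that become composable only after identifying objects, modulo the congruence generated by composition -- is exactly the theory of generalised congruences and quotient categories developed in \cite{BBP1999}; since colimits in $\mathcal{C}\mathrm{at}$ are not computed naively, you should cite that description (or prove it) rather than assert it, after which your argument goes through as written.
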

\begin{proof}
    The result \cite[Prop. 5.1]{BBP1999} states that an extremal epimorphism in $\mathcal{C}\mathrm{at}$ which reflects composition is regular. Therefore, the result follows from  \cref{cor:extremalepi} and \cref{lem:reflectcomp}.
\end{proof}

\begin{exmp}
    Continuing with the surjective algebra morphisms of \cref{rmk:noninclusion}, we can see that the assignment from morphisms of $\Tfrak(A/I)$ to morphisms of $\Tfrak(A)$ given by
    \[ [f_{[\Vcal, \Scal][\Ucal, \Tcal]}] \mapsto [f_{(\ifrak[\Vcal, \Scal])(\overline{\ifrak}_{\Vcal}^{\Scal}[\Ucal, \Tcal])}]\]
    is not well-defined on composition. Take for example the morphism $[f_{[0, \mods A][\Fac( \begin{smallmatrix} 1 \end{smallmatrix}), \Fac (\begin{smallmatrix} 1 \end{smallmatrix}) ]}]$ of $\Tfrak(A/I)$ which gets mapped to $[f_{[0, \mods A][\Fac (\begin{smallmatrix} 1\\2 \end{smallmatrix}), \Fac (\begin{smallmatrix} 1\\2 \end{smallmatrix}) ]}] \in \Tfrak(A)$ under the assignment. However, the morphism may be decomposed as
    \begin{align*} 
    [f_{[0, \Fac (\begin{smallmatrix} 1 \end{smallmatrix})][\Fac(\begin{smallmatrix} 1 \end{smallmatrix}), \Fac (\begin{smallmatrix} 1 \end{smallmatrix})]}]\circ  [f_{[0, \mods A][0, \Fac(\begin{smallmatrix} 1 \end{smallmatrix})]}] &= [f_{[0, \mods A][\Fac (\begin{smallmatrix} 1 \end{smallmatrix}), \Fac(\begin{smallmatrix} 1 \end{smallmatrix})]}] \\
    &= [f_{[\Fac(\begin{smallmatrix} 1 \end{smallmatrix}), \mods A][\Fac (\begin{smallmatrix} 1 \end{smallmatrix}), \Fac (\begin{smallmatrix} 1 \end{smallmatrix})]}] \circ [f_{[0, \mods A][\Fac (\begin{smallmatrix} 1 \end{smallmatrix}), \mods A]}]
    \end{align*}
    However, the composition of the images of the left-hand side is $[f_{[0, \mods A][\Fac(\begin{smallmatrix} 1 \end{smallmatrix}),\Fac (\begin{smallmatrix} 1 \end{smallmatrix})]}]$, which is different to the composition of the images of the left-hand side $[f_{[0, \mods A][\Fac (\begin{smallmatrix} 1\\2 \end{smallmatrix} ), \Fac (\begin{smallmatrix} 1\\2 \end{smallmatrix})]}]$.
\end{exmp}

\section{Classifying spaces and picture groups} \label{sec:classifyingspace}

The original motivation for the ($\tau$-)cluster morphism category comes from the study of cohomology groups of the \textit{picture group} \cite{IgusaTodorovWeyman2016}. This group is the fundamental group of an associated topological space, called the \textit{picture space} which is simply the classifying space $\Bcal \Tfrak(A)$ of the ($\tau$)-cluster morphism category $\Tfrak(A)$, see \cite{HansonIgusa2021,IgusaTodorov2017}. Therefore, if $\Bcal \Tfrak(A)$ is a $K(\pi,1)$ space, that is, a space whose only non-trivial homotopy group is in degree 1, then we may relate the cohomology groups of the picture group with those of the space. \\

The \textit{classifying space} $\Bcal \Ccal$ of a topological category $\Ccal$ was first explicitly studied in \cite{Segal1968}, where it is defined as the geometric realisation of the nerve of the category. The nerve of the category is a simplicial set whose 0-simplices are identity morphisms and whose $k$-simplices are chains of $k$ composable non-identity morphisms. It was shown in \cite{IgusaTodorov2017} and \cite{HansonIgusa2021} that for the ($\tau$)-cluster morphism category $\Bcal \Tfrak(A)$ is actually a CW-complex with a particularly nice structure. As a consequence we are able to obtain the following.

\begin{prop} \label{thm:spacequotient}
    Let $A$ be $\tau$-tilting finite and $I \in \ideal A$, then the classifying space $\Bcal \Tfrak(A/I)$ is a quotient space of $\Bcal \Tfrak(A)$ and the quotient map is induced by $F_I$ from \cref{thm:inducedfunctor}.
\end{prop}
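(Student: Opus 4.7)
The plan is to show that the continuous map $\Bcal F_I : \Bcal \Tfrak(A) \to \Bcal \Tfrak(A/I)$ induced by $F_I$ is a quotient map of topological spaces. Writing $\Bcal \Ccal = |N \Ccal|$ as the geometric realization of the nerve, the simplicial map $NF_I$ sends a chain of composable morphisms in $\Tfrak(A)$ to its image in $\Tfrak(A/I)$. The key combinatorial step is to verify that $NF_I$ is surjective in every simplicial degree.

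Surjectivity on $0$-simplices is \cref{prop:surjonobj}, and surjectivity on non-degenerate $1$-simplices is \cref{prop:essentialimage}, which provides an explicit preimage of every morphism $[f_{[\Vcal, \Scal][\Ucal, \Tcal]}]$ of $\Tfrak(A/I)$, namely $[f_{(\ifrak[\Vcal, \Scal])(\overline{\ifrak}_{\Vcal}^{\Scal}[\Ucal, \Tcal])}]$ in $\Tfrak(A)$. For $k$-simplices with $k \geq 2$, given a composable chain $[f_1], [f_2], \ldots, [f_k]$ in $\Tfrak(A/I)$, I would lift it inductively using \cref{lem:reflectcomp}: first lift $[f_1]$ via \cref{prop:essentialimage}, and at each subsequent step lift $[f_{i+1}]$ so that its source matches the target of the previously chosen lift of $[f_i]$. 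The key observation, extracted from the proof of \cref{lem:reflectcomp}, is that the codomain of the chosen lift of $[f_i]$ and the domain of the chosen lift of $[f_{i+1}]$ are $\tau$-perpendicular intervals of $\tors A$ corresponding to the same wide subcategory $\iota(\cdot)$ of $\mods A$, hence represent the same object of $\Tfrak(A)$, so that the lifts are composable. Degenerate simplices cause no issue since identity morphisms lift trivially. This shows that $NF_I$ is a level-wise surjection of simplicial sets.

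To conclude, a level-wise surjection of simplicial sets is a regular epimorphism in $\mathrm{sSet}$ (which is a topos). Because geometric realization $|-|: \mathrm{sSet} \to \mathrm{Top}$ is a left adjoint (to the singular simplicial set functor) and thus preserves colimits, $\Bcal F_I = |NF_I|$ is a regular epimorphism in $\mathrm{Top}$. Since regular epimorphisms in $\mathrm{Top}$ are precisely quotient maps, $\Bcal \Tfrak(A/I)$ carries the quotient topology from $\Bcal \Tfrak(A)$ along $\Bcal F_I$, as required. The main obstacle I anticipate is the bookkeeping in the inductive construction of lifts of higher simplices: the explicit lifting recipe of \cref{prop:essentialimage} depends on both the source and the target of a morphism, so one must verify that the intermediate choices line up modulo the equivalence $\sim$ on objects of $\Tfrak(A)$, which is exactly what \cref{lem:reflectcomp} ensures.
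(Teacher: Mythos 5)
Your proof is correct, but it takes a genuinely different route from the paper. You work entirely at the level of the nerve: levelwise surjectivity of $NF_I$ follows from \cref{prop:surjonobj} in degree $0$, \cref{prop:essentialimage} in degree $1$, and for longer chains from the observation (extracted from \cref{lem:reflectcomp} together with \cref{prop:wideintinpreim} and \cref{prop:essentialimage}) that the canonical lifts of consecutive morphisms have target and source intervals corresponding to the same lifted wide subcategory $\iota(-)$, hence are literally the same object of $\Tfrak(A)$, so the lifted chain is composable and maps onto the given simplex; then the topological conclusion comes from abstract nonsense, since epimorphisms in the presheaf topos of simplicial sets are regular, geometric realisation is a left adjoint and so preserves the relevant coequaliser, and regular epimorphisms in $\mathrm{Top}$ are exactly quotient maps. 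The paper instead invokes the cubical CW-structure of $\Bcal\Tfrak(A)$ from Hanson--Igusa, introduces the generalised congruence $\asymp$ on $\Tfrak(A)$ induced by $F_I$ in the sense of Bednarczyk--Borzyszkowski--Pawlowski, and uses \cref{cor:extremalepi} with their Corollary 3.11 to identify $\Tfrak(A)/\asymp \cong \Tfrak(A/I)$ as categories, reading off the quotient space cell-by-cell. Your approach buys a cleaner and more self-contained topological argument (no CW or cubical input, and the quotient-map property of $\Bcal F_I$ is immediate from colimit preservation), while the paper's approach additionally produces the categorical identification of $\Tfrak(A/I)$ with the quotient category $\Tfrak(A)/\asymp$, which ties the topological quotient to the congruence machinery and to the cell structure used elsewhere in the paper; both ultimately rest on the same ingredients, namely surjectivity on objects and morphisms and the reflection of composition.
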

\begin{proof}
    The $\tau$-cluster morphism category $\Tfrak(A)$ is a cubical category \cite[Thm. 2.14]{HansonIgusa2021}, see also \cite[Thm. 3.16]{Kaipelcatpartfan}. This means that every morphism can be seen as (the diagonal of) a cube, whose edges correspond to factorisations of the morphism into irreducible ones. It is shown in \cite[Prop. 4.7]{HansonIgusa2021} that the classifying space $\Bcal \Tfrak(A)$ of the $\tau$-cluster morphism category is a CW-complex with one $k$-cell $e([\Ucal, \Tcal]_\sim)$ for each equivalence class of wide intervals, where $k$ is the number of isomorphism classes of (relative) simple modules of $\Ucal^\perp \cap \Tcal$. This cell is the union of factorisation cubes of morphisms $[f_{[\Ucal, \Tcal] [\Xcal, \Xcal]}]$ for $\Xcal \in [\Ucal, \Tcal]$. Given an ideal $I \in \ideal A$, define an equivalence $\asymp$ relation on factorisation cubes by setting
    \[ [f_{[\Ucal, \Tcal][\Xcal, \Xcal]]}] \asymp [f_{[\Ucal', \Tcal'][\Xcal', \Xcal']}]\]
    whenever $F_I$ applied to these morphisms coincides. Since factorisation cubes are simply geometric realisations of morphisms, this identification coincides with the generalised congruence $\asymp_I$, as defined in \cite[Sec. 3]{BBP1999}, on $\Tfrak(A)$ induced by the functor $F_I$. By \cite[Cor. 3.11]{BBP1999} there is a monomorphism $(\Tfrak(A)/\asymp ) \to \Tfrak(A/I))$ in $\mathcal{C}\mathrm{at}$ from the quotient category $\Tfrak(A)/\asymp$ as defined in \cite[Sec. 3.9]{BBP1999}. Since $F$ is an extremal epimorphism by \cref{cor:extremalepi}, it follows that $(\Tfrak(A)/\asymp) \cong \Tfrak(A/I)$. Clearly, $\Bcal(\Tfrak(A)/\asymp)$ is a quotient space of $\Bcal \Tfrak(A)$ and the result follows.
\end{proof}

The standard approach taken in \cite{IgusaTodorov2017,IgusaTodorov22, HansonIgusaPW2SMC,HansonIgusa2021,Kaipelcatpartfan} to show that $\Bcal \Tfrak(A)$ is a $K(\pi,1)$ space is to use three sufficient conditions translated from cube complexes \cite{Gromov1987} to cubical categories \cite{Igusa2022}. The first condition is the existence of faithful functor to a groupoid with one object (a so-called \textit{faithful group functor}). While the existence of such a faithful group functor is conjectured to exist for an arbitrary algebra, see \cite[Conj. 5.10]{HansonIgusa2021}, its existence is currently known only for the following three classes of algebras:
\begin{itemize}
    \item Hereditary algebras of finite or tame type \cite[Thm. 3.7]{IgusaTodorov22};
    \item $K$-stone algebras, \cite[Thm. 5.9]{HansonIgusaPW2SMC};
    \item Algebras whose $g$-vector fan is a finite hyperplane arrangement \cite[Thm. 6.20]{Kaipelcatpartfan}.
\end{itemize}

As an immediate consequence of \cref{cor:equivcats} we can extend these faithful functors to $\tau$-tilting finite algebras $A$ having isomorphic lattice of torsion classes to one of the above families.\\

In general, the picture group captures the structure of the lattice of torsion-classes and thus of maximal green sequences. For hereditary algebras, it was shown in \cite{IgusaTodorovMGS2021} that positive expressions for a certain (Coxeter) group element are in bijection with maximal green sequences. The definition of the picture group in \cite[Prop. 4.4]{HansonIgusa2021} is readily adapted to the general case as follows.

\begin{defn}
    Let $A$ be a finite-dimensional algebra. The \textit{picture group} $G(A)$ is defined by having generators 
    \[\{ X_S: S \in \brick A \text{ and } \Filt_A\{S\} \text{ is $\tau$-perpendicular} \} \cup \{ g_{\Tcal} : \Tcal \in \ftors A \}\]
    with a relation $g_{\Tcal_1}= X_S g_{\Tcal_2}$ whenever there is an arrow $\Tcal_1 \xrightarrow{S} \Tcal_2$ in $\Hasse(\ftors A)$ and the relation $g_0 = e$. 
\end{defn}

We conclude this section with a group-theoretic analogue of \cref{thm:spacequotient}

\begin{prop} \label{prop:picgroupquot}
    Let $A$ be $\tau$-tilting finite and $I \in \ideal A$. Then there is a surjective group homomorphism $G(A) \to G(A/I)$ induced by $\overline{(-)}: \tors A \to \tors A/I$. 
\end{prop}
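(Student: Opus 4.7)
The plan is to construct the homomorphism explicitly on generators and then verify that it respects the defining relations, using the brick-labelling description of the lattice congruence $\Phi_I$ in \cref{thm:brickcontract}. Define $\varphi: G(A) \to G(A/I)$ on generators by sending $g_\Tcal \mapsto g_{\overline{\Tcal}}$ for each $\Tcal \in \ftors A$, and
\[ X_S \mapsto \begin{cases} X_S & \text{if } S \in \mods A/I, \\ e & \text{if } S \notin \mods A/I. \end{cases} \]
Since $A$ is $\tau$-tilting finite, $A/I$ is also $\tau$-tilting finite by \cite[Cor. 1.9]{DIRRT2017} so every wide subcategory is $\tau$-perpendicular, meaning the generators $X_S$ and $X_{S'}$ are indexed simply by bricks in $\mods A$ and $\mods A/I$ respectively. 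In particular, if $S \in \mods A/I$ is a brick, then $\Filt_{A/I}\{S\}$ is a $\tau$-perpendicular wide subcategory of $\mods A/I$ and $X_S$ is a valid generator of $G(A/I)$.

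Next I would verify the defining relations. The relation $g_0 = e$ is preserved because $\overline{0} = 0$. For each covering relation $\Tcal_1 \xrightarrow{S} \Tcal_2$ in $\Hasse(\ftors A)$, \cref{thm:brickcontract}(1) provides a dichotomy: either $S \in \mods A/I$, in which case $\overline{\Tcal_1} \xrightarrow{S} \overline{\Tcal_2}$ is a covering relation in $\Hasse(\ftors A/I)$ with the same brick label $S$, so the image $g_{\overline{\Tcal_1}} = X_S g_{\overline{\Tcal_2}}$ is a defining relation in $G(A/I)$; or $S \notin \mods A/I$, in which case $\overline{\Tcal_1} = \overline{\Tcal_2}$ by \cref{thm:brickcontract}(2) and \cref{prop:quotlattiso} (as the two endpoints lie in the same congruence class), so the image of the relation reduces to $g_{\overline{\Tcal_1}} = e \cdot g_{\overline{\Tcal_2}}$, which holds trivially. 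Hence $\varphi$ extends to a well-defined group homomorphism.

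For surjectivity, every generator $g_{\Tcal'}$ of $G(A/I)$ arises as $\varphi(g_\Tcal)$ for any choice of $\Tcal \in \ftors A$ with $\overline{\Tcal} = \Tcal'$, which exists because $\overline{(-)}: \tors A \twoheadrightarrow \tors A/I$ is surjective by \cref{thm:DIRRTquot} and $\tors A = \ftors A$, $\tors A/I = \ftors A/I$ by $\tau$-tilting finiteness. Every generator $X_{S'}$ of $G(A/I)$ corresponds to a brick $S'$ in $\mods A/I$, and by \cref{thm:brickcontract}(1) $S'$ appears as the label of some non-contracted arrow in $\Hasse(\ftors A)$, so $X_{S'} = \varphi(X_{S'}) \in \image \varphi$. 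Since generators lie in the image, $\varphi$ is surjective.

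I do not expect a serious obstacle here: the only subtlety is checking that the group generators are correctly indexed after passing to the quotient, which is handled by the $\tau$-tilting finiteness of $A/I$ and the fact that \cref{thm:brickcontract} gives a precise correspondence between non-contracted Hasse arrows in $\Hasse(\ftors A)$ and all arrows of $\Hasse(\ftors A/I)$, preserving brick labels. This correspondence is exactly what is needed to translate the defining relations of $G(A)$ into those of $G(A/I)$.
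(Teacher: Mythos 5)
Your proposal is correct and follows essentially the same argument as the paper: the same definition of the map on generators ($X_S \mapsto X_S$ or $e$ depending on whether $IS=0$, $g_{\Tcal} \mapsto g_{\overline{\Tcal}}$), the same verification of the relations via the dichotomy of \cref{thm:brickcontract} on contracted versus non-contracted Hasse arrows, and the same surjectivity argument using \cref{thm:DIRRTquot} and the identification of $\brick A/I$ with bricks of $A$ killed by $I$.
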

\begin{proof}
    Define a map $\phi: G(A) \to G(A/I)$ given by
    \[ X_S \mapsto \begin{cases} X_S & \text{ if } S \in \brick A/I, \\ e &\text{ otherwise,}\end{cases} \quad \text{and} \quad g_\Tcal \mapsto g_{\overline{\Tcal}}.\]
    Since $\mods A/I$ is a full subcategory of $\mods A$, we identify $\brick A/I$ with $\{ S \in \brick A: IS = 0\} \subseteq \brick A$, similar to \cite[Sec. 5.2]{DIRRT2017}. Therefore $\phi$ induces a surjection on the generators $X_S \in G(A/I)$. Similarly by \cref{thm:DIRRTquot} the map $\overline{(-)}: \tors A \to \tors A/I$ is surjective and hence $\phi$ is surjective on the generators $g_{\overline{\Tcal}}$. To show that the group relations $g_{\Tcal_1} = X_S g_{\Tcal_2}$ are preserved we distinguish between two cases: If $S \in \brick A/I$, the corresponding arrow of $\Hasse(\tors A)$ is not contracted, by \cref{thm:brickcontract}, and the group relation becomes 
    \[ \phi(g_{\Tcal_1}) = g_{\overline{\Tcal}_1} = X_S g_{\overline{\Tcal}_2} = \phi(X_S) \phi(g_{\Tcal_2}).\]
    And if $S \not \in \brick A/I$, then $\phi(X_S)=e$ and the corresponding arrow of $\Hasse(\tors A)$ is contracted, so $\overline{\Tcal_1} = \overline{\Tcal_2}$ and the group relation becomes
    \[ \phi(g_{\Tcal_1}) = g_{\overline{\Tcal}_1} = g_{\overline{\Tcal}_2} = e g_{\overline{\Tcal_2}} = \phi(X_S) \phi(g_{\Tcal_2}).\]
    Hence $\phi$ is a well-defined group homomorphism and surjective.
\end{proof}

\section{Examples} \label{sec:examples}
We begin with an example illustrating the different presentations $\Wfrak(A), \Cfrak(A)$ and $\Tfrak(A)$ of the $\tau$-cluster morphism category. 
\begin{exmp}\label{exmp:good}
Let $A \coloneqq K(1 \xrightarrow{a} 2)$ and consider $K^2 \cong A/\langle a \rangle \cong K(1 \quad 2)$. We write modules using their socle series and display the objects of the $\tau$-cluster morphism categories in \cref{tab:table1}. 
Recall that the correspondence between the objects of the $\tau$-cluster morphism categories is given as follows:
\begin{enumerate}
    \item $\Cfrak(A) \ni [\Ccal_{(M,P)}] \mapsto M^\perp \cap {}^\perp \tau M \cap P^\perp \in \Wfrak(A)$.
    \item $\Tfrak(A) \ni [\Ucal, \Tcal]_\sim \mapsto \Ucal^\perp \cap \Tcal \in \Wfrak(A)$.
    \item $\Cfrak(A) \ni [\Ccal_{(M,P)}] \mapsto [\Fac M, {}^\perp \tau M \cap P^\perp]_\sim \in \Tfrak(A)$. 
\end{enumerate}

\begin{table}[ht!]
\begin{center}
\bgroup
\def\arraystretch{1.5}
\begin{tabular}{||c c c ||} 
 \hline
 $\Wfrak(A)$ & $\Cfrak(A)$  & $\Tfrak(A)$  \\ [0.5ex] 
 \hline\hline
 $\mods A$ & $[\Ccal_{(0,0)}]$ & {\small$[0, \mods A]_{\sim}$} \\ 
 \hline
 $\Filt\{1\}$ & $[\Ccal_{(\begin{smallmatrix}2\end{smallmatrix},\begin{smallmatrix} 0 \end{smallmatrix})}] = [\Ccal_{(\begin{smallmatrix} 0 \end{smallmatrix},\begin{smallmatrix}2\end{smallmatrix})}]$ & {\small$[0, \Fac (\begin{smallmatrix}1\end{smallmatrix})]_\sim = [\Fac (\begin{smallmatrix}2\end{smallmatrix}), \mods A]_\sim$} \\
 \hline
 $\Filt\{ \begin{smallmatrix} 2 \end{smallmatrix}\}$& $[\Ccal_{(\begin{smallmatrix} 1\\2 \end{smallmatrix},\begin{smallmatrix} 0 \end{smallmatrix})}] = [\Ccal_{(\begin{smallmatrix} 0 \end{smallmatrix},\begin{smallmatrix} 1\\2 \end{smallmatrix})}]$ & {$[\begin{smallmatrix} 0 \end{smallmatrix}, \Fac (\begin{smallmatrix}2 \end{smallmatrix})]_\sim = [\Fac (\begin{smallmatrix} 1\\2 \end{smallmatrix}), \mods A]_\sim$} \\
 \hline
  $\Filt\{\begin{smallmatrix} 1\\2 \end{smallmatrix} \}$& $[\Ccal_{(\begin{smallmatrix} 1\end{smallmatrix},\begin{smallmatrix} 0 \end{smallmatrix})}]$ & {\small$[\Fac (\begin{smallmatrix} 1 \end{smallmatrix}), \Fac (\begin{smallmatrix} 1\\2 \end{smallmatrix})]_\sim$} \\
 \hline
 0  & \begin{tabular}{@{}c@{}} {\small$[\Ccal_{(M,P)}]$} \\ {\small$(M,P) \in \ttilt A$} \end{tabular} & \begin{tabular}{@{}c@{}}{\small $[\Tcal,\Tcal]_{\sim}$ }\\ {\small$\Tcal \in \tors A$} \end{tabular}  \\ [1ex] 
 \hline
\end{tabular}
\egroup
\caption{Objects of the $\tau$-cluster morphism categories of $A$}
\label{tab:table1}
\end{center}
\end{table}

We use this translation to describe the image of the functor $F_I: \Tfrak(A) \to \Tfrak(A/I)$ in $\Wfrak(A)$ and $\Cfrak(A)$ in \cref{tab:table2}. Each entry in \cref{tab:table2} represents the image under  $F_{\langle a \rangle}: A \to K^2$  of the entry in the same position of \cref{tab:table1}. Using this description, we illustrate the image of one particular morphism under $F_I$ in \cref{tab:table3}. This example suggests that describing the functor $F_I: \Tfrak(A) \to \Tfrak(A/I)$ is less natural in $\Wfrak(A)$ and $\Cfrak(A)$. 

\begin{table}[ht!]
\begin{center}
\bgroup
\def\arraystretch{1.5}
\begin{tabular}{||c c c ||} 
 \hline
 $\Wfrak(K^2)$ & $\Cfrak(K^2)$  & $\Tfrak(K^2)$  \\ [0.5ex] 
 \hline\hline
 $\mods K^2$ & $[\Ccal_{(0,0)}]$ & {\small$[0, \mods K^2]_{\sim}$} \\ 
 \hline
 $\Filt\{ \begin{smallmatrix} 1 \end{smallmatrix}\}$ & $[\Ccal_{(2,0)}] = [\Ccal_{(0,2)}]$ & {\small$[0, \Fac \begin{smallmatrix} 1 \end{smallmatrix}]_\sim = [\Fac \begin{smallmatrix} 2 \end{smallmatrix}, \mods K^2]_\sim$} \\
 \hline
 $\Filt\{ \begin{smallmatrix} 2 \end{smallmatrix}\}$& $[\Ccal_{(1,0)}] = [\Ccal_{(0,1)}]$ & {\small$[0, \Fac \begin{smallmatrix} 2 \end{smallmatrix}]_\sim = [\Fac \begin{smallmatrix} 1 \end{smallmatrix}, \mods K^2]_\sim$} \\
 \hline
  $0$& $[\Ccal_{(\begin{smallmatrix} 1 \end{smallmatrix},\begin{smallmatrix} 2 \end{smallmatrix})}]$ & {\small$[\Fac \begin{smallmatrix} 1 \end{smallmatrix}, \Fac \begin{smallmatrix} 1 \end{smallmatrix}]_\sim$} \\
 \hline
 0  & \begin{tabular}{@{}c@{}} {\small$[\Ccal_{(M,P)}]$} \\ {\small$(M,P) \in \ttilt K^2$} \end{tabular} & \begin{tabular}{@{}c@{}}{\small $[\Tcal,\Tcal]_{\sim}$ }\\ {\small$\Tcal \in \tors K^2$} \end{tabular}  \\ [1ex] 
 \hline
\end{tabular}
\egroup
\caption{The corresponding images under $F_I$ of objects in \cref{tab:table1}.}
\label{tab:table2}
\end{center}
\end{table}

\begin{table}[ht!]
\begin{center}
\bgroup
\def\arraystretch{1.5}
\begin{tabular}{|| c c c ||} 
 \hline
 category / algebra & $A$ & $K^2$  \\ [0.5ex] 
 \hline\hline
 $\Wfrak(-)$ & $[(\begin{smallmatrix} 1 \end{smallmatrix},\begin{smallmatrix} 0 \end{smallmatrix})]: \mods A \to \Filt\{\begin{smallmatrix} 1\\2 \end{smallmatrix}\}$ & $[(\begin{smallmatrix} 1 \end{smallmatrix},\begin{smallmatrix} 2 \end{smallmatrix})]: \mods K^2 \to 0$\\ 
 \hline
 $\Cfrak(-)$ & $[f_{\Ccal_{(0,0)}\Ccal_{(1,0)}}]$ & $[f_{\Ccal_{(0,0)}\Ccal_{(1,2)}}]$ \\
 \hline
 $\Tfrak(-)$ & $[f_{[0, \mods A][\Fac (\begin{smallmatrix} 1 \end{smallmatrix}), \Fac (\begin{smallmatrix} 1\\2 \end{smallmatrix})]}$ & $[f_{[0, \mods K^2][\Fac (\begin{smallmatrix} 1 \end{smallmatrix}), \Fac (\begin{smallmatrix} 1 \end{smallmatrix})]}$ \\
 \hline
\end{tabular}
\egroup
\caption{The image of a morphism under $F_I$ in the different presentations of the $\tau$-cluster morphism category}
\label{tab:table3}
\end{center}
\end{table}
\end{exmp}

Next we show that the assumption of $\tau$-tilting finiteness in \cref{prop:surjonobj} is necessary by constructing an example where there exists a $\tau$-perpendicular interval $[\Ucal, \Tcal] \subseteq \tors A/I$ not in the image of any $\tau$-perpendicular interval of $\tors A$ under $\overline{(-)}: \tors A \to \tors A/I$. This shows that $F_I$ is not surjective-on-objects.

\begin{exmp}\label{exmp:taufinite}
    Let $A \coloneqq K( \begin{tikzcd} 1 \arrow[r, shift left, "b"] \arrow[r, shift right] & 2 \end{tikzcd})$ and $B \coloneqq A/\langle b \rangle \cong K( \begin{tikzcd} 1 \arrow[r] & 2 \end{tikzcd})$ then $A$ is well-known to be $\tau$-tilting infinite and $B$ to be $\tau$-tilting finite. Consider the interval $[ \Fac_{B} (\begin{smallmatrix}1\end{smallmatrix}), \Fac_{B} (\begin{smallmatrix}1\\2\end{smallmatrix})] \subseteq \tors B$, which is $\tau$-perpendicular and gives rise to the $\tau$-perpendicular wide subcategory $\Filt_{B}\{ \begin{smallmatrix}1\\2\end{smallmatrix} \} \subseteq \mods B$. \\

    The preimage $[\pi_{\downarrow} \Fac_{B} (\begin{smallmatrix}1\end{smallmatrix}), \pi_{\uparrow} \Fac_{B} (\begin{smallmatrix}1\\2\end{smallmatrix})] \subseteq \tors A$ of this interval under the surjection $\overline{(-)}: \tors A \to \tors B$ is $[\Fac_A (1), \Fac_A (\begin{smallmatrix}1\\22\end{smallmatrix})] \subseteq \tors A$ which is not $\tau$-perpendicular. Moreover the $\tau$-perpendicular intervals contained in this preimage come in four families of the forms
    \begin{align*} [\Fac_A(\tau_A^{-m} (\begin{smallmatrix}1\\22\end{smallmatrix})), \Fac_A(\tau_A^{-(m+1)}(\begin{smallmatrix}2\end{smallmatrix}))], \quad & [\Fac_A(\tau_A^{-(m+1)} (\begin{smallmatrix}2\end{smallmatrix})), \Fac_A(\tau_A^{-(m+1)}(\begin{smallmatrix}1\\22\end{smallmatrix}))], \\
    [\Fac_A(\tau_A^m (\begin{smallmatrix}1\end{smallmatrix})), \Fac_A(\tau_A^m (\begin{smallmatrix}11\\2\end{smallmatrix}))], \quad& [\Fac_A(\tau_A^m (\begin{smallmatrix}11\\2\end{smallmatrix}))], \Fac_A(\tau_A^{m+1} (\begin{smallmatrix}2\end{smallmatrix}))]
    \end{align*}
    for $m \geq 0$, where $\tau^{-m} \coloneqq \tau^{-1} \circ \dots \circ \tau^{-1}$ is a composition of $m$ terms and similarly for $\tau^m$. Importantly, the image of the intervals under $\overline{(-)}: \tors A \to \tors B$ in the top row is the trivial interval $[\Fac_{B} (\begin{smallmatrix}1\\2\end{smallmatrix}),\Fac_{B}(\begin{smallmatrix}1\\2\end{smallmatrix})]$ and the image of the intervals in the bottom row is the trivial interval $[\Fac_{B} (\begin{smallmatrix}1\end{smallmatrix}), \Fac_{B} (\begin{smallmatrix}1\end{smallmatrix})]$. Hence no $\tau$-perpendicular interval of $\tors A$ maps onto the interval $[ \Fac_{B} (\begin{smallmatrix}1\end{smallmatrix}), \Fac_{B} (\begin{smallmatrix}1\\2\end{smallmatrix})] \subseteq \tors B$. In particular, the module $\begin{smallmatrix}1\\2\end{smallmatrix}$ is support $\tau$-tilting in $\mods B$ but not in $\mods A$, where it is a regular module. 
\end{exmp}

The example \cite[Exmp. 5.11]{DIRRT2017} illustrates that the restriction $\overline{(-)}: \ftors A \to \ftors A/I$ is also not a surjection when both $A$ and $A/I$ are $\tau$-tilting infinite.  \\

Beside the existence of a faithful group functor, the pairwise compatibility property of morphisms in the $\tau$-cluster morphism category is a sufficient condition for $\Bcal \Tfrak(A/I)$ to be a $K(\pi,1)$ space \cite{Igusa2022}. This condition is related to the study of 2-simple minded collections \cite[Lem. 2.16]{HansonIgusa2021}. It is known that not all algebras satisfy the pairwise compatibility of last factors, see \cite[Thm. 4.1]{HansonIgusaPW2SMC} and \cite[Thm. 4]{BarnardHanson2022}. The following example illustrates that the pairwise compatibility property is independent of taking quotient algebras.

\begin{exmp} \label{exmp:DynkinD4}
    Let $A = KD_4$ be the representation-finite hereditary algebra of Dynkin type $D_4$ with orientation 
    \[ D_4 : \begin{tikzcd} 1 \arrow[rd, "\gamma_1"] \\
    & 2 \arrow[r, "\gamma_2"] & 3 \\
    4 \arrow[ru, "\gamma_4", swap] \end{tikzcd} \]

It was shown in \cite[Thm. 2.5]{IgusaTodorov22} that $\Tfrak(A)$ satisfies the pairwise compatibility property since $A$ is hereditary of finite type. The quotient $A/ \langle \gamma_4 \gamma_2 \rangle$ is gentle with no loops and 2-cycles, thus \cite[Thm. 4.1]{HansonIgusaPW2SMC} implies that $A/I$ does not have the pairwise compatibility condition since there exists a vertex of valency greater than 2. Hence taking quotients does not preserve the pairwise compatibility condition. Moreover, $A$ is a quotient of the preprojective algebra $\Pi_{D_4}$ of type $D_4$ which was shown in \cite[Thm. 4]{BarnardHanson2022} to not have the pairwise compatibility property, so taking quotient also does not preserve the failure of the pairwise compatibility property. In total we have a sequence of algebra epimorphisms:
\[ \Pi_{D_4} \twoheadrightarrow D_4 \twoheadrightarrow D_4/ \langle \gamma_4 \gamma_2 \rangle \twoheadrightarrow K^4 \]
where $K^4$ is the semisimple algebra on 4 vertices, which does satisfy the pairwise compatibility property.
\end{exmp}

\subsection*{Acknowledgements} The author is grateful to Sibylle Schroll and Hipolito Treffinger for their guidance and many meaningful conversations and suggestions. The author also thanks Erlend D. Børve and Calvin Pfeifer for inspiring discussions.

\subsection*{Funding} 
MK is supported by the Deutsche Forschungsgemeinschaft (DFG, German Research Foundation) -- Project-ID 281071066 -- TRR 191.

\newcommand{\etalchar}[1]{$^{#1}$}

\end{document}